\newtheorem{thm}{Theorem}[section]
\newtheorem{letteredthm}{Theorem}
\newtheorem{letteredcor}[letteredthm]{Corollary}
\newtheorem{cor}[thm]{Corollary}
\newtheorem*{cor*}{Corollary}
\newtheorem{lem}[thm]{Lemma}
\newtheorem{prop}[thm]{Proposition}
\newtheorem*{ques*}{Question}
\newtheorem{defin}[thm]{Definition}
\newtheorem*{example*}{Example}
\newtheorem*{porism*}{Porism}
\newtheorem*{scholium*}{Scholium}
\newtheorem*{thm*}{Theorem}
\newtheorem*{defin*}{Definition}
\newtheorem*{lem*}{Lemma}
\newtheorem*{prop*}{Proposition}
\newtheorem*{remark*}{Remark}
\def\d{{\mathrm d}}
\def\cA{{\mathcal A}}
\def\cM{{\mathcal M}}
\def\JF{{\rm JF}}
\def\JFC{{\rm JFC}}
\def\pid{{\pi_{\rm d}}}
\def\wt{\widetilde}
\def\bC{{\mathbb C}}
\def\bF{{\mathbb F}}
\def\cG{{\mathcal G}}
\def\rad{{\mathrm{Rad}}}
\def\bR{{\mathbb R}}
\def\area{{\mathrm{Area}}}
\def\del{{\partial}}
\newcommand{\ol}{\overline}
\begin{document}
\thispagestyle{empty}
\title[Square pegs between two graphs]{Square pegs between two graphs}
\author{Joshua Evan Greene} 
\address{Department of Mathematics, Boston College, USA}
\email{joshua.greene@bc.edu}
\urladdr{https://sites.google.com/bc.edu/joshua-e-greene}
\author{Andrew Lobb} 
\address{Mathematical Sciences,
	Durham University,
	UK}
\email{andrew.lobb@durham.ac.uk}
\urladdr{http://www.maths.dur.ac.uk/users/andrew.lobb/}
\thanks{JEG was supported by the National Science Foundation under Award No.~DMS-2304856 and by a Simons Fellowship.}

\begin{abstract}
We show that there always exists an inscribed square in a Jordan curve given as the union of two graphs of functions of Lipschitz constant less than $1 + \sqrt{2}$.  We are motivated by Tao's result that there exists such a square in the case of Lipschitz constant less than $1$.  In the case of Lipschitz constant $1$, we show that the Jordan curve inscribes rectangles of every similarity class.  Our approach involves analysing the change in the spectral invariants of the Jordan Floer homology under perturbations of the Jordan curve.
\end{abstract}

\maketitle

\section{Introduction.}
\label{sec:intro}
A Jordan curve $\gamma \subset \bC$ is said to \emph{inscribe} a quadrilateral $Q$ if $\gamma$ contains four points forming the vertex set of such a $Q$.  Similarly $Q$ is said to \emph{inscribe in} $\gamma$.  The Square Peg Problem due to Toeplitz posits that every Jordan curve inscribes a square~\cite{toeplitz1911}.  This conjecture remains unsolved.

In 2017, Tao proved a result partly responsible for reviving current interest in the Square Peg Problem and its relatives.	Suppose that $f, g \colon [0,1] \rightarrow \bR$ are two functions satisfying $f(0) = g(0)$, $f(1) = g(1)$, and $f(t) > g(t)$ for all $0<t<1$.  Then the union of the graphs of $f$ and $g$ forms a Jordan curve $\gamma(f,g)$.

\begin{thm*}[Tao \cite{tao}]
	\label{thm:tao}
	If $f$ and $g$ as above are both Lipschitz continuous of Lipschitz constant less than $1$, then $\gamma(f,g)$ inscribes a square.
\end{thm*}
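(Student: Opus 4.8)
\emph{Step 1: the combinatorial type of the square.} The plan is to reduce the problem to finding a zero of a defect map on a two-parameter family of candidate squares, and then to run a degree (or parity) argument; a degree computation of this shape underlies Tao's integration proof. The first thing I would do is exploit the Lipschitz bound to constrain how the four vertices distribute between the graphs of $f$ and $g$. Any two vertices joined by a side (resp. a diagonal) span a chord of some slope $m$, while the perpendicular side (resp. the other diagonal) has slope $-1/m$, so among such a perpendicular pair at least one has $|{\rm slope}|\ge 1$. Since a chord of the graph of a function with Lipschitz constant $<1$ has slope strictly less than $1$ in absolute value, and is never vertical, it follows that (a) no three vertices of an inscribed square lie on the graph of $f$ alone, or on the graph of $g$ alone, and (b) the two vertices on the graph of $f$ are not diagonally opposite the two on the graph of $g$. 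Hence every inscribed square has two vertices, adjacent along a side, on the graph of $g$, the opposite side on the graph of $f$, and is erected from the lower side upward — apart from the two degenerate configurations concentrated at the shared endpoints $(0,f(0))$ and $(1,f(1))$, which are not genuine squares and must be excised.

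\emph{Step 2: the defect map.} I would parametrize a candidate square by $(s,t)$ with $0\le s\le t\le 1$, taking lower vertices $P=(s,g(s))$ and $Q=(t,g(t))$, so that the remaining vertices are forced to be $P+i(Q-P)$ and $Q+i(Q-P)$. Recording how far these two points fall from the graph of $f$ defines a map $\Phi\co\{0\le s\le t\le 1\}\to\bR^2$ whose zeros in the interior of the triangle, away from the corners $(0,0)$ and $(1,1)$, are exactly the genuine inscribed squares; at those two corners $\Phi$ vanishes spuriously because $f=g$ at the endpoints, and on the diagonal edge $s=t$ one has $\Phi=(f(s)-g(s))(1,1)\neq 0$ for $s\in(0,1)$ since $f>g$ there.

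\emph{Step 3: the topological count.} I would then homotope $(f,g)$, through pairs with Lipschitz constant $<1$ (a convex constraint), to a tractable model — for instance $g$ constant and $f$ a shallow tent, so that $\gamma$ is a thin obtuse triangle with a single transverse inscribed square — and argue that the signed (or mod-$2$) count of interior zeros of $\Phi$ is a homotopy invariant, hence nonzero. The substance here is that during the homotopy no zero of $\Phi$ can escape across the edges $s=t$, $s=0$, $t=1$ of the parameter triangle, and that the local contributions at the corners $(0,0)$ and $(1,1)$ are controlled; the hypothesis ${\rm Lip}<1$ is what makes this work, since it forces the connecting sides of a candidate square — being perpendicular to a chord of slope $<1$, hence of slope $>1$ — to be steep, and hence to meet the graph of $f$ transversally and in at most one point. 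Tao's way of making the count explicit is to exhibit a $1$-form on the parameter space whose integral along a truncation of the boundary of the triangle both counts the inscribed squares and telescopes to an elementary nonzero quantity determined by the endpoint data, with ${\rm Lip}<1$ ensuring the relevant coordinate changes are diffeomorphisms.

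\emph{The main obstacle.} I expect the hard part to be exactly this boundary-and-corner analysis: proving that the zero set of $\Phi$ stays in a compact subset of the open triangle, and computing the local behaviour at $(0,0)$ and $(1,1)$ precisely enough to read off a nonzero invariant. This is also where the threshold $1$ is genuinely needed — it is precisely what keeps the connecting sides steep — and where simply inflating the constant beyond $1$ breaks down, which is the gap the present paper pushes out to $1+\sqrt2$.
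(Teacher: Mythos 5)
You should first note that the paper does not prove this statement at all: it is quoted verbatim from Tao, so the only meaningful comparison is with Tao's published argument (and with the machinery this paper builds to go beyond it). Your Step 1 is correct, and it is the same perpendicularity--versus--slope bookkeeping that appears both in Tao's paper and in \Cref{lem:tao_curves_inscribe_graceful} here: two perpendicular chords cannot both have slope of absolute value less than $1$, so no three vertices lie on one graph and the two graphs cannot split the vertex set along the diagonals; the ``erected upward'' claim also follows from ${\rm Lip}<1$ together with $f\ge g$. Steps 2--3, however, are not Tao's proof, and as written they contain a genuine gap at exactly the point you flag as ``the main obstacle.''

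The gap is the compactness of the zero set of $\Phi$ near the corners $(0,0)$ and $(1,1)$ of the parameter triangle, uniformly along the homotopy. A mod-$2$ or degree count of interior zeros is a homotopy invariant only if no zeros escape through the boundary; you rule out the edge $s=t$ away from the corners, but near $(0,0)$ the Lipschitz bound only yields an estimate of the form $t-s\le \frac{2L}{1-L}\,t$ for a zero $(s,t)$ with $L<1$ the Lipschitz constant, which does not forbid a sequence of genuine inscribed squares shrinking to the common endpoint $(0,f(0))$: rescaling such a sequence can converge to a nondegenerate square inscribed in a limiting Lipschitz configuration, so no soft blow-up or compactness argument closes this. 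This is precisely the shrink-out phenomenon described in \Cref{sec:intro}, and it is the reason the present paper resorts to spectral invariants bounded away from $0$ and $\area(\gamma)$ rather than a degree count. Tao's actual argument sidesteps the issue entirely: his $1$-form identity is a conservation law, $\sum_{i}(-1)^i\int_{\gamma_i} y\,dx=0$ for quadruples of curves traversing squares, which he applies to families of (partial) squares constructed from the Lipschitz hypothesis and combines with the assumption that no square is inscribed to reach a contradiction; there is no parameter triangle, no homotopy to a model curve, and no corner degree to compute, so your closing description of his method as a boundary integral that ``counts the inscribed squares'' is a mischaracterization. A further small point: $\Phi$ requires evaluating $f$ at $s-(g(t)-g(s))$, which can fall outside $[0,1]$, so you must either extend $f$ and show the resulting zeros are not spurious or restrict the domain of $\Phi$ accordingly.
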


\noindent
In this paper we improve on this result.
A {\em $\theta$-rectangle} is a rectangle whose diagonals meet at angle $\theta$.

\begin{letteredthm}
	\label{thm:thetarect}
	If $0 < \theta \leq \pi/2$, and $f$ and $g$ are of Lipschitz constant less than $\tan(\frac{\theta + \pi}{4})$, then $\gamma(f,g)$ inscribes a $\theta$-rectangle.
	\end{letteredthm}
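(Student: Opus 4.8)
\emph{Strategy.} A $\theta$-rectangle inscribed in a Jordan curve $\gamma$ is the same data as an unordered pair of distinct, concentric chords of $\gamma$ of equal length meeting at angle $\theta$ --- its two diagonals. As in the treatment of the smooth rectangular peg problem, after passing to sum--difference coordinates on $\bC\times\bC$ and compactifying, this becomes an intersection point of the Lagrangian torus $L_\gamma$ built from $\gamma$ with its image $\psi_\theta(L_\gamma)$ under the symplectomorphism that rotates the difference factor by $e^{i\theta}$. The \emph{degenerate} solutions --- all four vertices coincident --- sweep out a circle $\Delta_\gamma\subseteq L_\gamma\cap\psi_\theta(L_\gamma)$ that $\psi_\theta$ fixes pointwise, and the theorem asserts the existence of an intersection point off $\Delta_\gamma$. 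The Jordan Floer homology $\JF(\gamma)$ is arranged so that such a point is produced by the spectral invariant $\lambda(\gamma)\in\bR$ attached to the part of $\JF(\gamma)$ not supported on $\Delta_\gamma$, normalized so that $\Delta_\gamma$ carries action $0$ while a genuine inscribed $\theta$-rectangle with half-diagonal $w$ carries action equal to its Euclidean area $2|w|^{2}\sin\theta$. Then $\lambda(\gamma)>0$ forces an inscribed genuine $\theta$-rectangle of area $\lambda(\gamma)$.

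\emph{Reduction to a uniform spectral bound.} For smooth $\gamma$ the nonvanishing of $\JF(\gamma)$ modulo $\Delta_\gamma$ is exactly the input behind the smooth rectangular peg theorem, so $\lambda(\gamma)$ is then a well-defined positive critical value. Given $f,g$ of Lipschitz constant $L<\tan\bigl(\tfrac{\theta+\pi}{4}\bigr)$, mollify to get smooth $f_n\to f$ and $g_n\to g$ uniformly with the same Lipschitz bound --- after an innocuous adjustment near $t=0,1$ so that $\gamma_n:=\gamma(f_n,g_n)$ is a smoothly embedded circle --- so that $\gamma_n\to\gamma(f,g)$ uniformly. It suffices to produce $\varepsilon>0$ with $\lambda(\gamma_n)\geq\varepsilon$ for all $n$: then $\gamma_n$ inscribes a genuine $\theta$-rectangle $R_n$ of area $\geq\varepsilon$, hence of diameter bounded below, and after passing to a subsequence its four vertices converge to four distinct points of $\gamma(f,g)$; since for $0<\theta\leq\pi/2$ the only degeneration of a $\theta$-rectangle is total collapse, the limit is a genuine inscribed $\theta$-rectangle, proving the theorem.

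\emph{The spectral estimate and the origin of the constant.} The invariant $\lambda$ is Lipschitz under $C^{0}$-perturbations of the Jordan curve within a class of uniformly bounded Lipschitz constant: such a perturbation of $\gamma$ deforms $L_\gamma$ by an ambient Hamiltonian isotopy of controlled energy, and the standard comparison of action spectra under Hamiltonian isotopy bounds the variation of $\lambda$. Hence $\lambda(\gamma_n)$ is Cauchy with some limit $\lambda_\infty\geq0$, and everything comes down to ruling out $\lambda_\infty=0$. If $\lambda_\infty=0$ then the rectangles $R_n$ collapse, and a subsequence of their vertex-quadruples converges to a single point $p\in\gamma(f,g)$. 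If $p$ lies in the interior of the graph of $f$ (or of $g$), then for large $n$ the four vertices of $R_n$ all lie on a short arc of $\gamma_n$ that is the graph of an $L$-Lipschitz function --- which is impossible, by the following elementary fact. \emph{The graph of an $L$-Lipschitz function contains the four vertices of a genuine $\theta$-rectangle only if $L\geq\tan\bigl(\tfrac{\theta+\pi}{4}\bigr)$.} Indeed, all four sides and both diagonals of such a rectangle are then chords of the graph and so have slope of absolute value at most $L=\tan\beta$, where $\beta:=\arctan L$; but the two diagonals and the four sides point in the directions $\mu\pm\tfrac\theta2$ and $\mu,\ \mu+\tfrac\pi2$ for a common angle $\mu$, and requiring all six of these to lie in $[-\beta,\beta]\bmod\pi$ forces first $\beta\geq\tfrac\pi4$ (from the two perpendicular side-directions) and then, inserting $\mu\pm\tfrac\theta2$, a one-line interval-intersection computation upgrades this to $\beta\geq\tfrac{\pi+\theta}{4}$. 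Under the hypothesis $L<\tan\bigl(\tfrac{\theta+\pi}{4}\bigr)$ this is violated, so $p$ cannot be an interior point of either graph.

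\emph{The main obstacle.} What remains --- and what I expect to be the crux of the proof --- is the possibility that the collapse point $p$ is one of the two endpoints $t=0,1$, where $\gamma(f,g)$ is not a single Lipschitz graph but two Lipschitz arcs meeting in a wedge of opening angle at most $2\arctan L<\tfrac{\theta+\pi}{2}$. Here the vertices of $R_n$ split between the two arcs, and one must show that the spectral number $\lambda$ cannot be supported by rectangles shrinking into such a corner. The interval-intersection bookkeeping above, applied now to the two arcs simultaneously, together with control of the Hamiltonian energy of the relevant isotopies near the boundary, should deliver the required exclusion; but this boundary analysis --- and, more generally, making the spectral estimate sharp enough to see that $\lambda(\gamma_n)$ never descends to $0$ precisely when $L<\tan\bigl(\tfrac{\theta+\pi}{4}\bigr)$ --- is the delicate heart of the matter. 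The shape of the constant is entirely dictated by the interval computation: at $\theta=\pi/2$ it reads $L<1+\sqrt2$, and as $\theta\to0$ it degenerates to $L<1$, recovering Tao's constant.
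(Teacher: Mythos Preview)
Your overall architecture is right --- spectral invariants of the Jordan Floer package, approximation by regular curves, a uniform positive lower bound on the spectral number, then no-shrinkout and a compactness limit --- but the argument you sketch for the lower bound does not close, and the paper obtains it by a genuinely different mechanism.

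Two concrete gaps. First, the normalization ``$\Delta_\gamma$ has action $0$ and a genuine inscribed $\theta$-rectangle has action equal to its Euclidean area $2|w|^{2}\sin\theta$'' is not available: the action of a generator is the Hamiltonian integral along the trajectory minus the symplectic area of a preferred capping disc on $\gamma\times\gamma$ avoiding the diagonal, and it depends on how $\gamma$ wraps around the rectangle, not just on the rectangle's size. So $\lambda(\gamma_n)\to 0$ does not by itself force the rectangles $R_n$ to shrink, and the chain ``small spectral value $\Rightarrow$ geometric collapse $\Rightarrow$ local Lipschitz obstruction'' breaks at the first arrow. What the no-shrinkout lemma actually consumes is a \emph{two-sided} bound $\delta<\ell_i<\area(\gamma)-\delta$ together with a uniform length bound. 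Second, you correctly flag the endpoint case $p\in\{(0,f(0)),(1,f(1))\}$ as ``the main obstacle'' and leave it open; for your local-collapse strategy it really is an obstacle, and the interval bookkeeping you outline does not resolve it.

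The paper sidesteps both issues by a global comparison rather than a local contradiction at a hypothetical collapse point. The Lipschitz hypothesis is used to prove a stronger structural fact than your four-vertices-on-one-graph obstruction: every inscribed $\theta$-rectangle in any curve $\gamma(F,G)$ with $F,G$ of Lipschitz constant below $\tan\bigl(\tfrac{\theta+\pi}{4}\bigr)$ is \emph{elegant} --- the vertices sit two on each graph, in the cyclic order matching the rectangle's own. (The constant already appears when ruling out \emph{three} consecutive vertices on one graph, from a side and an adjacent diagonal.) Elegance is exactly the hypothesis under which the action behaves monotonically along a nested isotopy: if $\gamma_s$ lies inside $\gamma_t$ for $s<t$ and every $\gamma_t$ inscribes only elegant $\theta$-rectangles, then
\[
\ell_i(\gamma_0,\theta)\ \le\ \ell_i(\gamma_1,\theta)\ \le\ \ell_i(\gamma_0,\theta)+\area(\gamma_1)-\area(\gamma_0).
\]
One then builds an explicit nested PL isotopy from a small square $S$ inside $\gamma(f,g)$ out to the approximating PL curve $C_n$, staying within the Lipschitz class throughout, and reads off $\ell_i(C_n,\theta)\ge\ell_i(S,\theta)>0$ together with the companion upper bound. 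The endpoints never need separate treatment, because the spectral bound is produced by comparison with the square rather than by analysing a putative collapse.
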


\noindent
As $\tan(\frac{\cdot + \pi}{4}) : [0,\pi/2] \longrightarrow [1,1+\sqrt{2}]$ is an increasing function, we obtain the following pair of corollaries:

\begin{letteredcor}
	\label{cor:square}
	If $f$ and $g$ are of Lipschitz constant less than $1 + \sqrt{2}$, then $\gamma(f,g)$ inscribes a square.
\end{letteredcor}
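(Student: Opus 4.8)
The plan is to deduce Corollary~\ref{cor:square} immediately from Theorem~\ref{thm:thetarect} by specializing to $\theta = \pi/2$, so essentially all of the work has already been done and only two elementary observations remain.

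First I would check that a quadrilateral is a square precisely when it is a $\pi/2$-rectangle. One direction is clear: the diagonals of a square are perpendicular, so a square is in particular a rectangle whose diagonals meet at angle $\pi/2$. For the converse, model a rectangle with vertices $(\pm a/2, \pm b/2)$; its diagonals point along $(a,b)$ and $(a,-b)$, so the unoriented angle $\theta$ between them satisfies $\cos\theta = (a^2 - b^2)/(a^2 + b^2)$, which vanishes exactly when $a = b$, i.e.\ when the rectangle is a square. (For a rectangle that is not a square the two supplementary angles cut out by the diagonals are one acute and one obtuse, and ``$\theta$'' in the convention of Theorem~\ref{thm:thetarect} denotes the one in $(0,\pi/2)$; the value $\theta = \pi/2$ is genuinely the limiting square case.)

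Second I would record the identity $\tan\!\big(\tfrac{\pi/2 + \pi}{4}\big) = \tan\!\big(\tfrac{3\pi}{8}\big) = 1 + \sqrt 2$, which follows from the half-angle formula $\tan(\phi/2) = (1 - \cos\phi)/\sin\phi$ at $\phi = 3\pi/4$ using $\cos(3\pi/4) = -\tfrac{1}{\sqrt 2}$ and $\sin(3\pi/4) = \tfrac{1}{\sqrt 2}$. Hence the hypothesis of Theorem~\ref{thm:thetarect} with $\theta = \pi/2$ is exactly that $f$ and $g$ have Lipschitz constant less than $1 + \sqrt 2$, while its conclusion is that $\gamma(f,g)$ inscribes a $\pi/2$-rectangle, which by the first step is a square; this is precisely Corollary~\ref{cor:square}. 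There is no real obstacle: the content all sits in Theorem~\ref{thm:thetarect}, and one need only be careful about the angle-convention bookkeeping above together with the monotonicity of $\tan(\tfrac{\cdot + \pi}{4})$ on $[0,\pi/2]$ noted before the corollary, which confirms that $1 + \sqrt 2$ is the largest Lipschitz bound this family of statements reaches.
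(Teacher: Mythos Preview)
Your proposal is correct and follows exactly the paper's approach: the corollary is stated immediately after the observation that $\tan(\tfrac{\cdot+\pi}{4})\colon[0,\pi/2]\to[1,1+\sqrt2]$ is increasing, and you have simply written out the specialization $\theta=\pi/2$ with the elementary verifications that $\tan(3\pi/8)=1+\sqrt2$ and that a $\pi/2$-rectangle is a square.
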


\begin{letteredcor}
	\label{cor:rect}
	If $f$ and $g$ are of Lipschitz constant $1$, then $\gamma(f,g)$ inscribes a $\theta$-rectangle, $\forall \theta \in (0,\pi/2]$.
\end{letteredcor}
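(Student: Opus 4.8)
The plan is to obtain Corollary~\ref{cor:rect} as an immediate consequence of Theorem~\ref{thm:thetarect}, in exactly the same way that Corollary~\ref{cor:square} follows by taking $\theta = \pi/2$ (using $\tan(\tfrac{3\pi}{8}) = 1+\sqrt2$ and the fact that a square is a $\tfrac{\pi}{2}$-rectangle). First I would record the elementary observation that $\theta \mapsto \tan\!\bigl(\tfrac{\theta+\pi}{4}\bigr)$ is strictly increasing on $[0,\pi/2]$: as $\theta$ runs over this interval its argument runs over $[\tfrac{\pi}{4},\tfrac{3\pi}{8}]$, on which $\tan$ is strictly increasing. Hence for every $\theta \in (0,\pi/2]$ one has the strict inequality $\tan\!\bigl(\tfrac{\theta+\pi}{4}\bigr) > \tan\!\bigl(\tfrac{\pi}{4}\bigr) = 1$.

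Now fix $\theta \in (0,\pi/2]$ and suppose $f,g$ are of Lipschitz constant $1$. By the inequality just noted, $f$ and $g$ are in particular of Lipschitz constant strictly less than $\tan\!\bigl(\tfrac{\theta+\pi}{4}\bigr)$, so Theorem~\ref{thm:thetarect} applies verbatim and exhibits a $\theta$-rectangle inscribed in $\gamma(f,g)$. Since $\theta$ was an arbitrary element of $(0,\pi/2]$, this proves the corollary. The one point worth flagging is that the permissible Lipschitz threshold $\tan\!\bigl(\tfrac{\theta+\pi}{4}\bigr)$ in Theorem~\ref{thm:thetarect} decreases to $1$ as $\theta \to 0^+$, so no single application of Theorem~\ref{thm:thetarect} with a fixed bound handles all small $\theta$ simultaneously; instead one invokes Theorem~\ref{thm:thetarect} once for each $\theta$, which is legitimate because the curve $\gamma(f,g)$ is specified before $\theta$ is chosen. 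I anticipate no genuine obstacle here — all of the content is already contained in Theorem~\ref{thm:thetarect}.
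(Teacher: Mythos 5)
Your proposal is correct and is exactly the paper's argument: the authors derive Corollary~\ref{cor:rect} directly from Theorem~\ref{thm:thetarect} by noting that $\tan\bigl(\tfrac{\cdot+\pi}{4}\bigr)$ is increasing on $[0,\pi/2]$ with value $1$ at $\theta=0$, so for each fixed $\theta\in(0,\pi/2]$ a $1$-Lipschitz function has Lipschitz constant strictly less than $\tan\bigl(\tfrac{\theta+\pi}{4}\bigr)$. Your remark that the theorem must be invoked separately for each $\theta$ is a fair observation but does not change the substance.
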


\Cref{thm:thetarect} follows from properties of the Jordan Floer homology $\JF(\gamma,\theta)$ of a smooth Jordan curve $\gamma$ and angle $0 < \theta < \pi$.
For a generic pair $(\gamma,\theta)$, it is the homology of a complex generated over $\bF_2$ by the set $\cG$ of inscriptions of a $\theta$-rectangle in $\gamma$.
In earlier work, we defined this invariant in the case of a real analytic Jordan curve $\gamma$, establishing its basic properties and extracting some consequences for inscription problems \cite{greenelobb3}.
The main work in this paper is to extend the definition to a smooth Jordan curve and to establish some new properties for its associated spectral invariants, leading to \Cref{thm:thetarect} as an application.
The remainder of the introduction summarizes the approach and the new ingredients.

\newpage

Prior to \cite{greenelobb3}, we had proven that every smooth Jordan curve inscribes a $\theta$-rectangle for all $0 < \theta \leq \pi/2$ \cite{greenelobb1}.
The principal difficulty in extending this result to a weaker regularity class  of curves is that of {\em shrink-out}.
If $\gamma_n \rightarrow \gamma$ is a sequence of smooth Jordan curves converging to a non-smooth limit, then by passing to a subsequence, one finds a convergent sequence of inscribed $\theta$-rectangles $Q_n \subset \gamma_n$ with limit $Q_n \rightarrow Q \subset \gamma$.  In good cases, one concludes that $Q$ is a $\theta$-rectangle.
The problem that might arise is that  the limit $Q$ could \emph{degenerate} to a single point of $\gamma$.
In this work and in our previous work \cite{greenelobb3}, we use \emph{spectral invariants} in Jordan Floer homology in order to preclude shrink-out.

Jordan Floer homology is a version of Lagrangian Floer homology.
From this point of view, the generating set $\cG$ is the set of transverse intersection points between a pair of Lagrangian tori in $\bC^2$ associated with a generic pair $(\gamma,\theta)$.
Floer homology furnishes a real-valued grading on the generators called the \emph{action} $\cA \colon \cG \longrightarrow \bR$, and $\cA(g)$ has something to do with the size of the corresponding rectangle $Q_g \subset \gamma$.
In particular, if $0 < \cA(g) < \area(\gamma)$ (the area enclosed by $\gamma$) and we know an \emph{a priori} upper bound on the length of $\gamma$, then we can deduce a lower bound on the size of $Q_g$.

The differential of the chain complex $\JFC(\gamma, \theta)$ is filtered by action, leading to a filtration grading on the homology $\JF(\gamma, \theta)$.
One of the main results of \cite{greenelobb3} is that the group $\JF(\gamma,\theta)$ is a $2$-dimensional $\bF_2$-vector space generated by homology classes $\alpha_i$ for $i=1,2$ where $\alpha_i$ is homogeneous of homological degree $i$.  We write $\ell_i(\gamma, \theta) \in \bR$ for the filtration grading of $\alpha_i$: this is the \emph{spectral invariant} associated to $\alpha_i$.  For convenience we also define $\ell_i(\gamma, 0) = 0$ and $\ell_i(\gamma, \pi) = \area(\gamma)$.

Spectral invariants possess several important properties, which we collect here.

\begin{thm}[Properties of spectral invariants]
	\label{thm:basic_properties}
	Suppose that $\gamma$ is a smooth Jordan curve and $0 \leq \theta \leq \pi$.  Then the spectral invariants $\ell_1, \ell_2$ satisfy the following:
	\begin{enumerate}
		\item \emph{Area bound.} $0 \leq \ell_1(\gamma, \theta), \ell_2(\gamma, \theta) \leq \area(\gamma)$.
		\item \emph{Spectrality.}  Each $\ell_i(\gamma, \theta)$ is equal to the action of some generator of $\JFC(\gamma,\theta)$ corresponding to an inscribed $\theta$-rectangle of $\gamma$.
		\item \emph{Monotonicity.}  Each $\ell_i(\gamma,\cdot)\colon [0,\pi] \longrightarrow [0,\area(\gamma)]$ is monotonic increasing.
		\item \emph{Continuity in $\theta$.}  Each $\ell_i(\gamma,\cdot)\colon [0,\pi] \longrightarrow [0,\area(\gamma)]$ is continuous.
		\item \emph{Continuity in $\gamma$.}  If $\gamma_r$, $r \in [0,1]$, is a smooth family of smooth Jordan curves, then $r \mapsto \ell_i(\gamma_r,\theta)$ is continuous in $r$.
		\item \emph{Hofer distance bound.} If $\gamma$ and $\gamma'$ enclose equal area, then $|\ell_i(\gamma,\theta) - \ell_i(\gamma',\theta)| \le 4 \d(\gamma,\gamma'),$ where $\d$ denotes the Hofer distance (\Cref{defin:hofernorm}).
		\item \emph{Duality.}  $\ell_1(\gamma, \theta) + \ell_2(\gamma, \pi - \theta) = \area(\gamma)$.
	\end{enumerate}
\end{thm}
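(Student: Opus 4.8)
The plan is to reduce every item to the case of a \emph{generic} pair $(\gamma,\theta)$ with $0<\theta<\pi$ --- one for which the two Lagrangian tori in $\bC^2$ attached to $(\gamma,\theta)$ can be made transverse by an arbitrarily small Hamiltonian perturbation --- and then to argue with the resulting Lagrangian Floer package: the complex $\JFC(\gamma,\theta)$ over $\bF_2$ generated by the inscribed $\theta$-rectangles, the real-valued action $\cA$ normalized so that every generator has $\cA\in(0,\area(\gamma))$, the action-filtered differential, and the identification $\JF(\gamma,\theta)\cong\bF_2\langle\alpha_1\rangle\oplus\bF_2\langle\alpha_2\rangle$. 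A preliminary and analytically demanding task is to establish this package for a \emph{smooth} $\gamma$, extending the real-analytic construction of \cite{greenelobb3}: either by carrying out the Floer theory directly for the smooth Lagrangian tori, or by a density-and-invariance argument against real-analytic approximants; in particular one re-proves the structure theorem $\JF(\gamma,\theta)\cong\bF_2^2$ in this generality. Granting this, $\ell_i$ is defined for generic pairs as the min-max $\ell_i(\gamma,\theta)=\min_c\max_{g\in c}\cA(g)$ over cycles $c$ in $\JFC(\gamma,\theta)$ representing $\alpha_i$, and for non-generic pairs as a limit along generic approximations. The \emph{Area bound} is then immediate from the normalization of $\cA$, and \emph{Spectrality} holds because a min-max over the finite generating set is attained at the action of a genuine generator; both pass to non-generic pairs because the action spectrum is closed, and they hold at $\theta\in\{0,\pi\}$ by the defining conventions.

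The quantitative heart is the \emph{Hofer distance bound}, from which the two continuity statements are derived. If $\gamma$ and $\gamma'$ enclose equal area, a Moser-type argument produces a Hamiltonian isotopy of $\bC$ carrying $\gamma$ to $\gamma'$; this lifts to a Hamiltonian isotopy of $\bC^2$ carrying the Lagrangian configuration of $(\gamma,\theta)$ to that of $(\gamma',\theta)$, and the induced continuation map on Floer complexes is a filtered homotopy equivalence whose action shift is bounded by the Hofer norm of the generating Hamiltonian. Tracking how a Hamiltonian on $\bC$ of norm $\|h\|$ lifts to $\bC^2$ --- it acts on both factors of a product coordinate, and both of the two Lagrangians are built from $\gamma$ --- produces the constant $4$, and taking the infimum over such Hamiltonians gives $|\ell_i(\gamma,\theta)-\ell_i(\gamma',\theta)|\le 4\d(\gamma,\gamma')$. \emph{Continuity in $\gamma$} then follows from the same continuation-map energy estimate applied to a small deformation of the Floer data along the family $\gamma_r$ (refining to the Hofer bound when areas coincide), and \emph{Continuity in $\theta$} on $(0,\pi)$ follows by realizing a small change in $\theta$ by a small Hamiltonian isotopy. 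Continuity at the endpoints is separate: one shows directly that $\cA(g)\to 0$, respectively $\cA(g)\to\area(\gamma)$, uniformly over inscribed $\theta$-rectangles $g$ as $\theta\to 0$, respectively $\theta\to\pi$, so that the whole action spectrum --- hence $\ell_i$ --- is forced to the corresponding end of $[0,\area(\gamma)]$.

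\emph{Duality} comes from a symmetry. Interchanging the two Lagrangian tori of $(\gamma,\theta)$ agrees, up to Hamiltonian isotopy, with passing from $\theta$ to $\pi-\theta$; Poincar\'e duality for Lagrangian Floer homology therefore identifies $\JFC(\gamma,\theta)$ with the dual of $\JFC(\gamma,\pi-\theta)$, interchanging $\alpha_1$ and $\alpha_2$ and sending each action $\cA(g)$ to $\area(\gamma)-\cA(g)$ by the normalization. Comparing min-max values across this identification yields $\ell_1(\gamma,\theta)+\ell_2(\gamma,\pi-\theta)=\area(\gamma)$, and the case $\theta\in\{0,\pi\}$ again matches the conventions.

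The step I expect to be the main obstacle is \emph{Monotonicity}, since it is a one-sided inequality and the energy estimate for continuation maps --- which only bounds the action shift in absolute value --- is not by itself enough. The plan is to construct, for $\theta<\theta'$, a continuation map $\JFC(\gamma,\theta)\to\JFC(\gamma,\theta')$ from a \emph{monotone} homotopy of Floer data, i.e.\ one whose curvature term enters with a favorable sign, so that the map is action non-decreasing; being an isomorphism on homology carrying $\alpha_i$ to $\alpha_i$, it then forces $\ell_i(\gamma,\theta)\le\ell_i(\gamma,\theta')$. Producing such a homotopy reduces to checking that the symplectic-area primitive underlying $\cA$ varies monotonically as $\theta$ increases --- a delicate but explicit computation with the geometry of inscribed $\theta$-rectangles --- and that this sign is compatible with the interpolating almost complex structures. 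Combined with \emph{Continuity in $\theta$}, this upgrades to monotonicity on all of $[0,\pi]$, endpoints included.
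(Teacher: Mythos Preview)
Your overall architecture is right, and items (1), (2), (4), (7) and the outline for (3) align with the paper, which likewise imports (1)--(4) from \cite{greenelobb3} and proves (7) by the symplectomorphism $R_{\pi-\theta}$ carrying the pair $(\gamma\times\gamma,\,R_\theta(\gamma\times\gamma))$ to $(R_{\pi-\theta}(\gamma\times\gamma),\,\gamma\times\gamma)$, together with an explicit computation that $\cA(p)+\cA(p')=\area(\gamma)$. Note that this last identity is not merely ``normalization'': the paper obtains it by gluing the two trajectories into a time-$\pi$ flowline, doubling via $R_\pi$, and reading off a winding number against $\Delta_\bC$; you should expect to supply a comparable argument.

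The substantive divergence is in (5) and (6). You propose to run \emph{continuation} maps for variations of $\gamma$; the paper explicitly rejects this route, writing that the continuation method ``presents analytic difficulties in attempting to prove Property (5)'', and instead uses Floer's \emph{bifurcation} method. Concretely, the paper tracks the $1$-parameter family of Lagrangian pairs $(\gamma_r\times\gamma_r,\,R_\theta(\gamma_r\times\gamma_r))$, analyses the parametrized moduli spaces $\ol\cM^\Delta(p,q)$ of strips avoiding $\Delta_\bC$, and shows that away from finitely many bifurcation times one has chain isomorphisms carrying generators to generators with continuously varying action, while at bifurcation times one has filtered chain homotopy equivalences of degree $0$; the Hofer bound (6) is then extracted from an explicit estimate on how the action of a persistent intersection point $p_r$ varies along the path (\Cref{lem:hoferdistancebound1}), giving $|\cA(p_1)-\cA(p_0)|\le 4|h_t|$. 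The difficulty you are eliding is that continuation maps in Jordan Floer homology count strips with $s$-dependent data that must still avoid $\Delta_\bC$, and the compactness arguments (no interior or boundary touching of the diagonal, no bubbling at $\Delta_\gamma$) have to be redone in that setting; the paper's diagonal-avoidance package (\Cref{prop:boundary_touching}, \Cref{prop:bubba}) is tailored to genuine holomorphic strips near the boundary and the diagonal, and it is not automatic that it transfers to the $s$-dependent continuation equation. Relatedly, your suggestion to extend the theory to smooth $\gamma$ by ``density-and-invariance against real-analytic approximants'' is circular here, since the invariance you would invoke is precisely continuity in $\gamma$; the paper instead extends the construction directly, replacing the Schwarz-reflection step from the real-analytic case by a short complex-analytic argument (the Hopf-lemma trick of \Cref{lem:hopf}) to rule out boundary touching and diagonal bubbling for smooth $\gamma$.
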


Properties (1)-(4) already appeared in \cite{greenelobb3} (compare \cite[Theorem 3]{leczap}).
They were stated there for real analytic curves, but the properties carry over directly to smooth curves, following the extension of the construction of $\JFC(\gamma,\theta)$ to smooth Jordan curves carried out in \Cref{subsec:smoothing}.
These properties already suffice for some some interesting applications \cite[Theorem A]{greenelobb3}.

Properties (4) and (5) are closely related.
In both cases, continuity depends on understanding filtered chain homotopy equivalences between the Jordan Floer chain complexes attached to different pairs of data.
There are two established mechanisms for constructing these maps.
The original one, due to Floer, is the \emph{bifurcation} method \cite{floerlag}.
Floer used it to show the invariance of Lagrangian Floer homology under Hamiltonian perturbation.
The second one is the \emph{continuation} method expounded, for instance, in \cite{audindamian,aurouxintro}.
We established Property (4) in \cite{greenelobb3} using the continuation method.
This method presents analytic difficulties in attempting to prove Property (5).
Instead, we employ the bifurcation method to prove it in \Cref{prop:cts_in_deformation} below.
Its success depends in part on the extending the definition of the Jordan Floer chain complex from real analytic to smooth curves.

Property (6) appears as \Cref{cor:hoferdistancebound3} below, following a familiar line of argument using Hofer distance bounds on action.
Property (7) appears as \Cref{prop:duality} below.
It follows from a duality result relating $\JF(\gamma, \theta)$ to $\JF(\gamma, \pi - \theta)$.
The duality property is not actually required in the sequel, but we record it for posterity.

In previous work \cite{greenelobb3}, we showed that $\area(\gamma)/2 - \vert \area(\gamma)/2 - \ell_2(\gamma, \theta) \vert$ gives rise to a lower bound on the size of an inscribed $\theta$-rectangle of $\gamma$ (also depending on $\area(\gamma)$ and on the length of $\gamma$).  We then considered how quickly $\ell_2(\gamma, \cdot)$ can change with $\theta$, finding a relationship to the diameter $2\rad(\gamma)$.  By approximating a rectifiable (in other words, finite length) Jordan curve $\gamma$ by a sequence of real analytic Jordan curves of uniformly bounded length, we showed, for example, that $\gamma$ inscribes a square if $\area(\gamma)$ is at least half the area bounded by a circle of the same diameter as $\gamma$.

To derive results by varying now $\gamma$ instead of $\theta$, we would like to know that small perturbations in $\gamma$ give rise to small perturbations in the actions of generators of $\JFC(\gamma, \theta)$ -- and in fact to \emph{small enough} perturbations that we can use this knowledge to prove something useful.  This is the reason for our restriction to Jordan curves given as the union of two graphs of functions satisfying a certain Lipschitz condition.  Such Jordan curves inscribe only \emph{elegant} $\theta$-rectangles.

\begin{defin}
	\label{defin:maximally_graceful}
	(Elegance.)
	Suppose that $\Gamma \subset \bC$ is a rectangular Jordan curve whose vertex $Q$ set lies on the Jordan curve $\gamma \subset \bC$.  If $\gamma$ is continuously isotopic to $\Gamma$ through an isotopy fixing $Q$, then we say that $Q$ is an \emph{elegant} inscribed rectangle of $\gamma$.
\end{defin}

\noindent
Previously, Schwartz studied {\em gracefully} inscribed rectangles, those inscribed rectangles whose cyclic ordering of the vertices coming from travelling around the curve is a cyclic ordering when travelling around the rectangle \cite{schwartz}.  Elegant rectangles are necessarily graceful, and Schwartz's nomenclature inspired our own.

The generators of $\JFC(\gamma, \theta)$ that correspond to elegant rectangles have good behavior under small perturbations of $\gamma$, and this allows us to conclude Theorem~\ref{thm:thetarect}.  This good behavior is reflected below in \Cref{prop:varying_through_graceful}.

It is interesting to contrast Tao's argument \cite{tao} that $\gamma(f,g)$ inscribes a square when $f$ and $g$ are of Lipschitz constant less than $1$.  The Lipschitz condition was used in that argument to force any inscribed square to have two vertices on the graph of $f$ and two vertices on the graph of $g$.  We impose the weaker Lipschitz condition to guarantee that all inscribed rectangles are elegant.

We shall find it convenient, at some point, to work with piecewise linear (PL) Jordan curves rather than smooth Jordan curves.
The technical reason is that we want to approximate $\gamma(f,g)$ by Jordan curves for which we can simultaneously control the spectral invariants; for which all inscriptions of $\theta$-rectangles are elegant; and which are suitably generic.  The precise advantage we gain from working with PL Jordan curves is highlighted in a footnote to the proof of the following proposition in Section \ref{sec:lipschitz_et_cetera}.
Leaving for later the definition of PL isotopy and the spectral invariants associated to a PL Jordan curve, we show the following:

\begin{prop}
\label{prop:varying_through_graceful}
	Suppose that $\gamma_t \subset \bC$ for $0 \leq t \leq 1$ is a PL isotopy of PL Jordan curves such that $\gamma_s$ is contained inside the bounded complementary region to $\gamma_t$ for all $0 \leq s < t \leq 1$.  Further suppose that for each $0 \leq t \leq 1$, all $\theta$-rectangles inscribed in $\gamma_t$ are elegant.
	
	Then for $i=1,2$ we have that
	\[ \ell_i(\gamma_0, \theta) \leq \ell_i(\gamma_1, \theta) \leq  \ell_i(\gamma_0, \theta) + \area(\gamma_1) - \area(\gamma_0)  {\rm .} \]
\end{prop}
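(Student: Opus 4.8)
The plan is to differentiate the spectral invariant along the isotopy and to show that its rate of change is pinched between $0$ and the rate of change of the enclosed area. Write $a_t=\area(\gamma_t)$, which is nondecreasing in $t$ by the nestedness hypothesis. Both inequalities of the proposition follow at once from the identity
\[ \ell_i(\gamma_1,\theta)-\ell_i(\gamma_0,\theta)=\int_0^1\frac{d}{dt}\,\ell_i(\gamma_t,\theta)\,dt ,\]
provided we establish that $t\mapsto\ell_i(\gamma_t,\theta)$ is continuous and piecewise-smooth with
\[ 0\ \le\ \frac{d}{dt}\,\ell_i(\gamma_t,\theta)\ \le\ \frac{d}{dt}\,a_t \]
wherever it is differentiable: the left-hand bound gives $\ell_i(\gamma_0,\theta)\le\ell_i(\gamma_1,\theta)$, and the right-hand bound gives $\ell_i(\gamma_1,\theta)\le\ell_i(\gamma_0,\theta)+a_1-a_0$. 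Because the data are PL, a generic perturbation of the family leaves only finitely many exceptional times, so no Cantor-function pathology arises and the Fundamental Theorem of Calculus applies interval by interval. As a preliminary, one may either run the bifurcation method directly for PL families --- cleanest here, since the inscribed $\theta$-rectangles form a finite set at each time and a generic PL isotopy has only standard nondegenerate bifurcations --- or first smooth the isotopy and pass to the limit, using that the spectral invariant of a PL curve is defined as such a limit (\Cref{subsec:smoothing}) and that area and the bounds of \Cref{thm:basic_properties} are continuous under smoothing.

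Running the bifurcation method exactly as in the proof of \Cref{prop:cts_in_deformation} shows that $t\mapsto\ell_i(\gamma_t,\theta)$ is continuous, and that away from the finitely many exceptional times part (2) of \Cref{thm:basic_properties} (Spectrality) together with genericity gives $\ell_i(\gamma_t,\theta)=\cA_t(g_t)$ for an inscribed elegant $\theta$-rectangle $Q_{g_t}$ that depends piecewise-smoothly on $t$. At an exceptional time the realizing generator may jump, but only across a handle slide (where the bifurcation analysis supplies a filtered chain isomorphism of the appropriate type, so $\ell_i$ remains continuous), a crossing of two generators' actions (where $\ell_i$, being a minimum of continuous functions, is unchanged), or the birth or death of a generator pair (a change supported on an acyclic summand, hence neutral for $\ell_i$). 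So $\ell_i(\gamma_\bullet,\theta)$ is globally continuous and piecewise-smooth, and on each interval between exceptional times its derivative equals $\frac{d}{dt}\cA_t(g_t)$.

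The heart of the proof is therefore the pointwise estimate
\[ 0\ \le\ \frac{d}{dt}\,\cA_t(g)\ \le\ \frac{d}{dt}\,a_t \]
for a fixed elegant inscribed $\theta$-rectangle $Q_g$ under the outward motion $\gamma_t$. This is where elegance (\Cref{defin:maximally_graceful}) is essential. Because the cyclic order of the four vertices of $Q_g$ around $\gamma_t$ agrees with their order around $Q_g$, the action $\cA_t(g)$ is given by an honest area formula: the total area of the four disjoint embedded regions pinched off between $\gamma_t$ and the sides of $Q_g$; in particular $0\le\cA_t(g)\le a_t$ for every $t$. As $t$ increases, $\gamma_t$ sweeps outward over an annular region whose area is the increment of $a_t$; each of the four pinched-off regions grows only by the part of this annulus it meets, while its vertices merely slide along $\gamma_t$ without changing the combinatorics. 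Summing the four contributions yields the upper bound, and the monotonicity of the outward motion forbids any of the regions from shrinking, yielding the lower bound. Integrating this over the isotopy and applying the displayed identity proves the proposition.

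The step I expect to be the main obstacle is this last estimate: fixing the precise area-theoretic formula for $\cA_t(g)$, proving that elegance forces the four regions to be embedded and pairwise disjoint (so their union has area at most $a_t$ and changes at rate at most $\frac{d}{dt}a_t$), and showing that a monotone outward deformation can never decrease any of them --- including a careful accounting of the contribution of the four moving vertices and of the corners of the PL curve $\gamma_t$ to $\frac{d}{dt}\cA_t(g)$. The subsidiary technical point, invoked above, is that this entire discussion is legitimate for PL curves: that their spectral invariants are well defined through the smoothing of \Cref{subsec:smoothing} and \Cref{thm:basic_properties}, and that the bifurcation analysis of \Cref{prop:cts_in_deformation} carries over to generic monotone PL isotopies through elegant curves.
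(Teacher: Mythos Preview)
Your overall strategy matches the paper's: reduce via continuity and spectrality to an estimate on how the action of a single path of elegant rectangles changes along the isotopy, then integrate (or sum) over the finitely many subintervals determined by the PL structure. The paper does exactly this, using \Cref{prop:pwlinearisotopyspectral} for continuity and noting that PL data forces the intersection locus $\gamma_t\times\gamma_t\cap R_\theta(\gamma_t\times\gamma_t)$ to be a product fibration over each of finitely many open subintervals, so that the action spectrum is a finite union of graphs there.

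The substantive difference is in the action estimate itself. The paper proves it as \Cref{lem:addingabitoficecream} by a topological trick rather than an explicit area formula: one builds a single piecewise-smooth Jordan curve $\gamma$ out of two arcs of $\gamma_0$, two arcs of $\gamma_1$, and the four traces of the moving vertices; the path $Q_r$ is then inscribed in this \emph{fixed} curve, so its action is constant in $r$, and the difference $\cA(Q_1)-\cA(Q_0)$ (now comparing actions taken in $\gamma_1$ versus $\gamma_0$) is visibly a nonnegative area bounded by $\area(\gamma_1)-\area(\gamma_0)$. This sidesteps any need to write $\cA_t(g_t)$ as a sum of lune areas.

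Your direct approach has a genuine gap at the step you yourself flag. The claim that ``each of the four pinched-off regions grows only by the part of this annulus it meets, while its vertices merely slide along $\gamma_t$'' tacitly treats the rectangle $Q_{g_t}$ as fixed; but $Q_{g_t}$ moves with $t$, so the straight sides of the lunes sweep area too, and there is no evident reason their contributions are nonnegative or that they cancel in the sum. Relatedly, the assertion that $\cA_t(g)$ equals a sum of four disjoint embedded lune areas is not established and is not the description the paper uses; even granting elegance, disjointness and embeddedness of such regions need an argument, and the bound $\sum L_i(t)\le a_t$ would not by itself yield $\frac{d}{dt}\sum L_i(t)\le\frac{d}{dt}a_t$. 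The paper's composite-curve argument in \Cref{lem:addingabitoficecream} is precisely the device that makes the estimate go through without confronting these issues; I would recommend replacing your area-formula paragraph with that argument (and, as the paper does, working with finite differences on each subinterval rather than derivatives).
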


\noindent
Thus, spectral invariants of $\gamma_0$ bound those of $\gamma_1$.
We expect that \Cref{prop:varying_through_graceful} remains true if one replaces the hypotheses of piecewise linearity with hypotheses of smoothness.

The proof of Theorem \ref{thm:thetarect} then follows by approximating $\gamma(f,g)$ by a sequence of piecewise linear Jordan curves $\gamma_n$ and obtaining bounds on each $\ell_i(\gamma_n,\theta)$ by applying Proposition \ref{prop:varying_through_graceful}.

\subsection*{Plan of the paper.}

Section \ref{sec:JF} contains some technical work in the Jordan Floer complex -- extending the definition from real analytic to smooth Jordan curves, and verifying a duality result relating $\JF(\gamma, \theta)$ to $\JF(\gamma, \pi - \theta)$.

In Section \ref{sec:deformation} we work in more specificity.  Firstly, we apply Floer's bifurcation arguments to show that the spectral invariants vary continuously with isotopies of the Jordan curve.  In particular, we obtain bounds on the variation of the spectral invariants in terms of the Hofer distance between two Lagrangians.  Secondly, we extend the definition of the spectral invariants to piecewise linear Jordan curves and prove continuity of the spectral invariants in variations of piecewise linear curves.  Thirdly, we give more precise bounds on the rate of change of the spectral invariants when the Jordan curves vary through a family only inscribing elegant rectangles.

Finally in Section \ref{sec:lipschitz_et_cetera} we consider the situation of the Jordan curves $\gamma(f,g)$ of Theorem \ref{thm:thetarect}.

\subsection*{Acknowledgements.}
We thank Joseph Cima and Vsevolod Shevchishin for entertaining our questions about conformal maps, and we extend very special thanks to Steve Bell for providing us with an argument for Lemma~\ref{lem:hopf}.

Part of this research was carried out in Fall 2022 while the authors were visiting the Simons Laufer Mathematical Sciences Institute, which is supported by the National Science Foundation (Grant No. DMS-1928930).  We also thank the R\'enyi Institute and the Erd\H{o}s Center for their hospitality in Summer 2024.
\newpage

\section{Jordan Floer homology.}
\label{sec:JF}
In this section we carry out some technical work in the Jordan Floer chain complex.  In Subsection \ref{subsec:smoothing}, we verify that the definition of the Jordan Floer homology given in \cite{greenelobb3} for real analytic Jordan curves in fact extends to smooth Jordan curves.  Real analyticity was required in \cite{greenelobb3} so that the Schwarz reflection principle would apply, allowing us to bypass some analysis of boundary behavior of holomorphic maps with smooth boundary.  For smooth curves, we obtain the desired boundary behavior by a different method which involves a simple, elegant argument in complex analysis due to Steve Bell (\Cref{lem:hopf}).
 In Subsection \ref{subsec:dualityofJF}, we deduce a duality result for Jordan Floer homology which allows us conclude the duality result in \Cref{thm:basic_properties}.
\subsection{The smooth Jordan Floer complex.}
\label{subsec:smoothing}
Fix an angle $0 < \theta < \pi$ and a smooth Jordan curve $\gamma$.
We begin by summarizing the construction of the Jordan Floer chain complex $\JFC(\gamma, \theta)$ and where it was important that we required the real analyticity of $\gamma$.

Firstly, we have the Lagrangian $\gamma \times \gamma \subset \bC^2$.  The construction of the differential in the Jordan Floer chain complex $\JFC(\gamma, \theta)$ (as well as the package of continuation maps, chain homotopies, and so on) involves considering moduli spaces of strips of finite energy
\[ \Sigma = \bR \times [0,1] \longrightarrow \bC \]
that satisfy a Cauchy-Riemann-Floer equation with respect to some (possibly strip-dependent) Hamiltonian, and that map the boundary $\partial \Sigma$ to $\gamma \times \gamma$.  Crucially, we make the further requirement that the closure of the image of these strips should be disjoint from the diagonal
\[ \Delta_\bC = \{ (z,z) \in \bC^2 : z \in \bC \} \subset \bC^2 {\rm .} \]
The intersection of $\Delta_\bC$ with $\gamma \times \gamma$ is clean, and is a smooth curve
\[ \Delta_\gamma = \Delta_\bC \cap (\gamma \times \gamma) = \{ (z,z) \in \bC^2 : z \in \gamma \subset \bC \} {\rm .} \]

The argument that Floer homology is well-defined involves an analysis of moduli spaces of strips in which the moduli space has positive dimension.  It is therefore important for our definition that a $1$-parameter family of such strips
\[ u_r \colon \Sigma \longrightarrow \bC^2 \]
for $0 \leq r < 1$ cannot have as its limit a strip $u_1$ (or a degeneration of a strip) that intersects the diagonal $\Delta_\bC$.

Such unfortunate behavior may \emph{a priori} occur in several different ways: firstly the strips could break or bubble at $r=1$ at a point in $\Delta_\gamma$.  If there is no breaking or bubbling then at $r=1$ we have an honest limit strip $u_1$.  But perhaps this strip $u_1$ could intersect $\Delta_\bC$ in its interior, or the strip $u_1$ could be disjoint from $\Delta_\bC$ in its interior but could meet $\Delta_\gamma$ in its boundary.  It was mainly to preclude this last possibility, \emph{boundary touching}, that we enforced the real analyticity of $\gamma$.  We also used real analyticity to exclude bubbling at $\Delta_\gamma$ by a very similar argument.

We briefly rehash the argument that rules out boundary touching in the real analytic case, obscuring some details in the interests of brevity.

Suppose that $u_1(0,0) \in \Delta_\gamma$.  We choose a small neighborhood $H \subset \Sigma = \bR \times [0,1]$ of $(0,0)$, biholomorphic to the half-disc.
If $H$ is small enough, then $u_1$ satisfies the Cauchy-Riemann equation on $H$, since the Hamiltonians have been chosen to 0 near the boundaries of the strip and the almost-complex structures have been chosen to be standard near the diagonal $\Delta_\bC$.
Assuming that $\gamma$ is real analytic, Schwarz reflection allows us to extend the domain of definition of $u_1 \vert_H$ to a disc neighborhood of $(0,0) \subset \bR^2 = \bC$.
If $u_1(0,0)$ is a non-isolated intersection point of $u_1(\Sigma)$ with the diagonal $\Delta_\bC$, then analytic continuation implies that $u_1(\Sigma) \subset \bC$, contradicting what we know about the ends of the strip.  On the other hand, if $u_1(0,0)$ is isolated, then Schwarz reflection gives an isolated intersection point of two complex curves at that point,
and this cannot be perturbed away due to positivity of intersection.  Then Schwarz reflection near $u_1(0,0)$ of $u_r$ for $r$ close to $1$ shows that $u_r(\Sigma)$ was not disjoint from $\Delta_\bC$, contradicting our assumption.

Now that we understand the issues, we establish a proposition which rules out boundary touching without the assumption that $\gamma$ is real analytic; and, following that, a proposition that rules out bubbling at $\Delta_\gamma$.

\begin{prop}[No boundary touching]
	\label{prop:boundary_touching}
	Suppose that for $0 \leq r \leq 1$
	\[ u_r \colon (-1,1) \times [0,1) \rightarrow \bC^2 \]
	is a family of smooth maps
	such that
	$u_r((-1,1)\times\{0\}) \subset \gamma \times \gamma$, and
	such that each $u_r$ is holomorphic on its interior.
	Further suppose that the image of $u_r$ is disjoint from $\Delta_\bC$ for $0 \leq r < 1$, and that $u_1((-1,1)\times(0,1))$ is disjoint from $\Delta_\bC$.
	
	Then the image of $u_1$ is disjoint from $\Delta_\gamma$.
\end{prop}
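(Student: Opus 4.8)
The plan is to argue by contradiction, reprising the structure of the real analytic argument recalled above but with Bell's Hopf-type lemma (\Cref{lem:hopf}) substituting for the Schwarz reflection principle at the one place it was used. Suppose $u_1$ carries some boundary point into $\Delta_\gamma$; after translating in the domain we may take this point to be $(0,0)$, so that $u_1(0,0)=(p,p)$ for some $p\in\gamma$. Write $u_r=(a_r,b_r)$ with $a_r,b_r\co(-1,1)\times[0,1)\to\bC$ and set $h_r:=a_r-b_r$: this is holomorphic on the open strip, continuous up to the flat boundary, and its zero set is exactly the preimage of $\Delta_\bC$. By hypothesis $h_r$ is nowhere zero for $r<1$, while $h_1$ is nowhere zero on the open strip and $h_1(0,0)=0$. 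We shall work on a small closed half-disc $\overline{H_\rho}$ about $(0,0)$, with $\rho$ chosen below, and use that $u_r\to u_1$ uniformly there.

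The first step is to pin down the behaviour of $h_1$ near $(0,0)$, and this is where \Cref{lem:hopf} enters, exactly as in \Cref{subsec:smoothing}: it gives that $a_1,b_1$, hence $h_1$, extend smoothly to the flat boundary near $(0,0)$, that the induced boundary parametrisations $\sigma_r,\tau_r$ (with $a_r(x,0)=\gamma(\sigma_r(x))$ and $b_r(x,0)=\gamma(\tau_r(x))$) are smooth with $\sigma_1(0)=\tau_1(0)$, and that $h_1$ does not have infinite-order tangency to $\Delta_\bC$ at $(0,0)$. Writing $\gamma$ near $p$ as a graph over its tangent line and using $\gamma'(p)\neq 0$, one obtains on the flat boundary $h_1(x,0)=(\sigma_1(x)-\tau_1(x))\,v\,w(x)$ with $v:=\gamma'(p)$ a nonzero constant and $w(x)\to 1$ as $x\to 0$; thus the boundary zeros of $h_1$ near $(0,0)$ coincide with those of the smooth real function $\delta_1:=\sigma_1-\tau_1$. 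Because $u_r$ avoids $\Delta_\bC$ for $r<1$, each $\delta_r:=\sigma_r-\tau_r$ is nowhere zero, hence of constant sign (for all $r<1$, by connectedness), which in the limit forces $\delta_1\geq 0$ after swapping $a_r$ and $b_r$ if necessary. Since $\delta_1\geq 0$, $\delta_1(0)=0$, and $\delta_1$ vanishes to finite order at $0$, that order is an even integer $2k$ with $k\geq 1$, and $\delta_1(x)=c\,x^{2k}(1+o(1))$ with $c>0$; combined with the smoothness and holomorphicity of $h_1$ this upgrades to $h_1(z)=c\,v\,z^{2k}(1+o(1))$ on $\overline{H_\rho}$ near $(0,0)$. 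In particular $h_1$ is nonzero on $\overline{H_\rho}\setminus\{(0,0)\}$ once $\rho$ is small enough.

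The contradiction is then a winding-number count, playing the role of the positivity-of-intersection step in the real analytic argument. For each $r<1$, $h_r$ is nowhere zero on the topological disc $\overline{H_\rho}$, so by the argument principle its winding number about $0$ along $\partial H_\rho$ vanishes. On the other hand, for $r$ close to $1$ we estimate this winding number directly. Along the flat part of $\partial H_\rho$, the factorisation $h_r(x,0)=(\sigma_r(x)-\tau_r(x))\,v\,w_r(x)$, together with $\delta_r>0$ and $w_r\to 1$ uniformly (valid for $\rho$ small and $r$ near $1$), shows that $\arg h_r$ stays in a small neighbourhood of the constant $\arg v$, so the change of argument there is $o(1)$. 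Along the curved semicircular part, on which $h_1$ is nonzero, so that $h_r\to h_1$ uniformly and away from $0$, the local model $h_1(z)=c\,v\,z^{2k}(1+o(1))$ gives that the change of argument of $h_1$, hence that of $h_r$ for $r$ near $1$, is $2k\pi+o(1)$. Summing the two contributions, the winding number of $h_r$ is an integer at distance less than $1$ from $k$, hence equal to $k\geq 1$ — contradicting that it vanishes. Therefore $u_1$ is disjoint from $\Delta_\gamma$.

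I expect the crux to be the first step: extracting, for a holomorphic half-disc with boundary on a merely smooth Jordan curve, both the boundary regularity and the ruling-out of infinite-order tangency at the offending point — precisely the package that Schwarz reflection provided for free in the real analytic case, and which \Cref{lem:hopf} is brought in to supply. Once the finite-order local model for $h_1$ is in hand, the winding-number bookkeeping and the reduction to that model are routine.
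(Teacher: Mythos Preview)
Your proposal has a genuine gap in the first step, stemming from a misreading of \Cref{lem:hopf}. That lemma is a \emph{non-existence} statement: there is no smooth $f\co D^2\to\bC$, holomorphic on the interior, with $f(D^2)\subset Q$, $f(1)=0$, and $f$ nonzero on the open disc. It does not supply boundary regularity (which is in any case already a hypothesis --- the $u_r$ are assumed smooth on $(-1,1)\times[0,1)$), and it certainly does not rule out infinite-order tangency of $h_1$ to $\Delta_\bC$ at $(0,0)$. But that finite-order vanishing is exactly what your local model $h_1(z)=c\,v\,z^{2k}(1+o(1))$ and the ensuing winding-number count rest on, and you give no independent justification for it. A function smooth up to the boundary and holomorphic in the interior can perfectly well vanish to infinite order at a boundary point without vanishing identically; this is precisely the subtlety that Schwarz reflection handled for free in the real analytic case and that any smooth-curve argument must work around rather than assume away.

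The paper's route is quite different from yours. After normalising so that $T_p\gamma=e^{i\pi/4}\bR$, one argues topologically --- using the family $u_r$ for $r<1$ --- that for a suitably shrunk region $R$ containing $(0,0)$ on its boundary the image $\pid\circ u_1(R)$ lies in the closed first quadrant $Q$: were some $\delta e^{-3i\pi/4}$ in the image, it would lie in the same component of $\bC\setminus\pid\circ u_r(\del R)$ as $0$ for $r$ near $1$, forcing $u_r$ to meet $\Delta_\bC$. Having trapped the image in $Q$, one composes with a smooth Riemann map from $R$ to $D^2$ and invokes \Cref{lem:hopf} as the \emph{final contradiction}, not as a source of regularity or a local model. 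No finite-order hypothesis is ever needed.
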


\begin{proof}
	Let us suppose for a contradiction, without loss of generality, that $u_1(0,0) = (p,p) \in \Delta_\gamma$ and that $T_p\gamma = e^{\frac{i\pi}{4}}\bR \subset \bC$.
	
	We shall be making use of the projection
	\[ \pid \colon \bC^2 \longrightarrow \bC \colon (z,w) \longmapsto z-w \]
	(where the subscript refers to the \emph{direction}
	of the arc in $\bC$ with endpoints $z$ and $w$).  We note that $\pid^{-1}(0) = \Delta_\bC$, so that $(\pid \circ u_1)^{-1}(0) \subset (-1,1) \times \{ 0 \}$.
	
	Now $\pid \circ u_1$ is a non-constant map, holomorphic in its interior, so that the closed set $(\pid \circ u_1)^{-1}(0) \subset (-1,1) \times \{ 0 \}$ cannot contain any open interval or else $\pid \circ u_1$ will vanish identically \cite{chernoff}.  So let us pick $\epsilon > 0$ such that, without loss of generality\footnote{The maps $u_r$ do not meet the diagonal and therefore we can make the assumption that the arguments lie in the given interval and, in particular, not possibly in the interval $[-7\pi/8,-5\pi/8]$ as well.}
	\[ \pi/8 \leq \arg(\pid \circ u_1([-\epsilon, \epsilon] \times \{ 0 \})) \leq 3\pi/8 {\rm ,} \]
	and we have $\pid \circ u_1 (-\epsilon,0) \not= 0$ and $\pid \circ u_1 (\epsilon,0) \not= 0$.

	\begin{figure}
		\labellist
		\pinlabel {$C$} at 350 790
		\pinlabel {$(-1,1) \times [0,1)$} at 500 1000
		\pinlabel {$(\epsilon,0)$} at 750 370
		\pinlabel {$(-\epsilon,0)$} at 250 370
		\pinlabel {$\pid \circ u_1 (C)$} at 2000 -70
		\endlabellist
		\centering
		\includegraphics[scale=0.12]{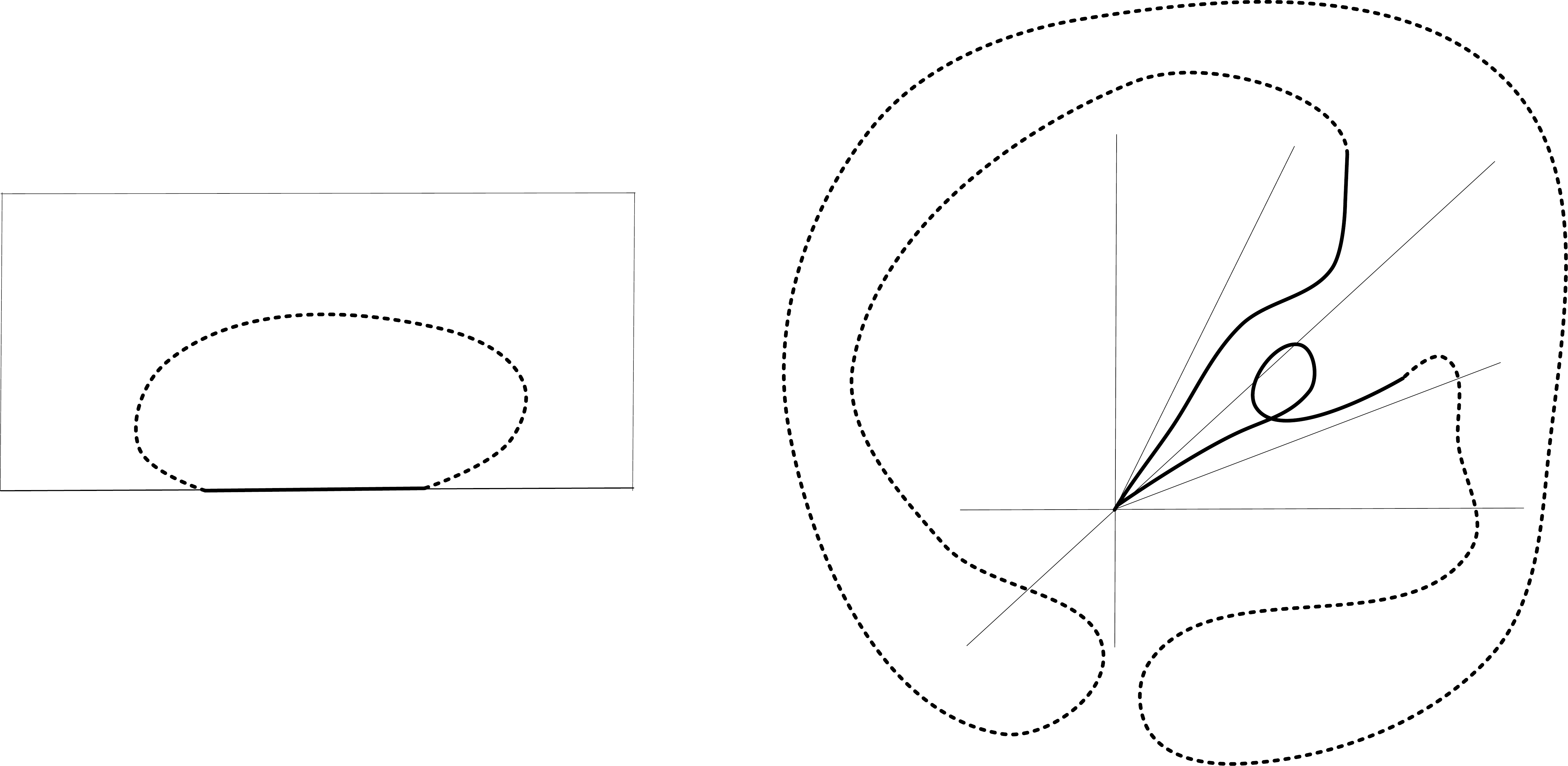}
		\vspace*{0.6cm}
		\caption{On the left is the smooth curve $C \subset (-1,1) \times [0,1)$, and on the right its image in $\bC$ under $\pid \circ u_1$.  The point $(0,0)$ is taken to $0$ under this map, and the thick arc is taken to the region of $\bC$ consisting of points of arguments between $\pi/8$ and $3\pi/8$.}
		\label{fig:smoo1}
	\end{figure}
	
	Within $(0,1) \times (-1,1)$, let us add an arc to $\{ 0 \} \times [-\epsilon, \epsilon]$ to make a smooth simple closed loop $C \subset [0,1) \times (-1,1)$.  This is illustrated in Figure \ref{fig:smoo1}.
	
	If we write $Q$ for the first quadrant of the complex plane (consisting of numbers with non-negative real and imaginary parts), and $R$ for the region bounded by $C$, note that we must have $\pid \circ u_1 (R) \cap B \subset Q$ for small enough discs $B \subset \bC$ centered at the origin.  This is because otherwise, for all small enough $\delta > 0$, we would have $\delta e^{-3i\pi/4} \in \pid \circ u_1(R)$.  So for $r$ close enough to $1$, since $\delta e^{-3i\pi/4}$ is in the same component of $\bC^2 \setminus \pid \circ u_r(C)$, we would have $0 \in \pid \circ u_r (R)$.  And hence $u_r ((-1,1) \times (0,1)) \cap \Delta_\bC \not= \emptyset$, contradicting our assumptions.
	
	So let us shrink the region $R$ bounded by $C$ by replacing $C$ with a new smooth closed curve (which we shall also call $C$) made by attaching an arc to $\{ 0 \} \times [-\epsilon, \epsilon]$ such that $\pid \circ u_1 (R) \subset Q$.  We illustrate this in Figure \ref{fig:smoo2}.

	\begin{figure}
		\labellist
		\pinlabel {$C$} at 350 515
		\pinlabel {$(-1,1) \times [0,1)$} at 500 800
		\pinlabel {$(\epsilon,0)$} at 750 175
		\pinlabel {$(-\epsilon,0)$} at 250 175
		\pinlabel {$\pid \circ u_1 (C)$} at 2200 165
		\endlabellist
		\centering
		\includegraphics[scale=0.12]{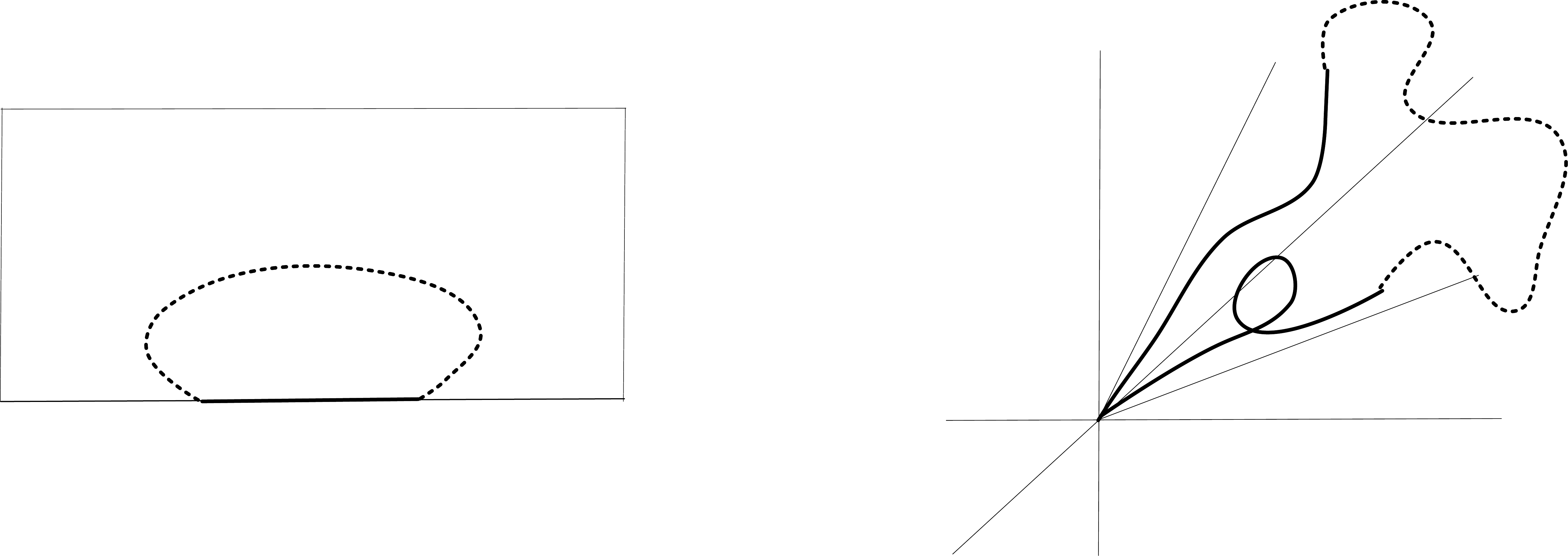}
		\vspace*{0.6cm}
		\caption{By shrinking $C$ we ensure that the region $R$ that it bounds maps into the first quadrant $Q$.}
		\label{fig:smoo2}
	\end{figure}

	Since $C$ is smooth, the smooth Riemann mapping theorem gives a diffeomorphism of $R$ with the closed unit disc $D^2$, holomorphic on its interior.  The proposition thus follows from the following lemma.
\end{proof}

\begin{lem}
	\label{lem:hopf}
	There does not exist a smooth map $f \colon D^2 \longrightarrow \bC$, holomorphic on its interior, such that $f(D^2) \subset Q$ (where we write $Q$ for the first quadrant), $f(1) = 0$, and $f$ is non-zero on its interior.
\end{lem}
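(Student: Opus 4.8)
The plan is to derive a contradiction from the existence of such an $f$ by passing to the boundary values and applying a classical rigidity phenomenon for bounded holomorphic functions. Since $f$ maps into the first quadrant $Q$, the function $f$ is bounded near many boundary points, but more importantly its composition with a suitable branch of the logarithm sends $D^2$ into a horizontal strip. Concretely, because $f(D^2) \subset Q$ and $f$ is non-vanishing on the interior, we may choose a holomorphic branch $\log f$ on the open disc with imaginary part confined to $[0,\pi/2]$; then $h := \log f$ is a bounded holomorphic function on the interior of $D^2$, so it has non-tangential boundary values almost everywhere on $\partial D^2$, and $f = e^h$ extends these to $f$ itself (which is already continuous on all of $D^2$ by hypothesis). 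The key point is that $f(1) = 0$ forces $\operatorname{Re} h(z) \to -\infty$ as $z \to 1$.

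Next I would invoke the fact that a bounded holomorphic function on the disc cannot have boundary values tending to a fixed value too fast along an arc without being constant — more precisely, I would use a Phragmén–Lindelöf / harmonic-measure argument. Map the disc to the upper half plane sending $1 \mapsto \infty$; then $h$ becomes a holomorphic function on the upper half plane whose imaginary part stays in $[0,\pi/2]$ and whose real part tends to $-\infty$ at $\infty$. The bounded harmonic function $\operatorname{Im} h$ on the upper half plane, with boundary values in $[0,\pi/2]$, is represented by its Poisson integral; combined with $\operatorname{Re} h \to -\infty$ near the boundary point corresponding to $1$, one sees that the positive harmonic function $-\operatorname{Re} h$ (which is bounded above iff $f$ is bounded below away from $0$, but here we only know it is bounded below as $f$ is bounded) cannot have the required blow-up at a single boundary point unless $f$ is identically $0$. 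The cleanest route is: the function $g := 1/(f + 1)$ or better $g := e^{-h}$ maps the interior into a sector of opening $\pi/2$ and is bounded; since $f$ is continuous on the closed disc and $f(1)=0$, $g$ blows up at $z=1$, but a holomorphic function continuous on $D^2 \setminus \{1\}$ and mapping into a sector cannot blow up at an isolated boundary point (by reflection across the two straight edges of the sector, or by a direct Phragmén–Lindelöf estimate in the sector), forcing a contradiction.

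Alternatively — and this is likely the argument intended, attributed to Bell — one can use the Schwarz reflection principle directly on the \emph{image} side: the boundary $\partial Q$ consists of two line segments, and near any boundary point of $D^2$ where $f$ takes a value on one of these lines, $f$ reflects holomorphically. Since $f$ is continuous on $D^2$ and $f(\partial D^2) \subset Q$, one shows the set where $f(e^{i\vartheta})$ lies in the interior of $Q$ cannot be too large: if it has positive measure, reflection combined with the boundary-value theory gives enough rigidity to conclude $f$ is constant, hence $f \equiv 0$, contradicting non-vanishing on the interior. The main obstacle is the boundary regularity bookkeeping: one must be careful that "smooth on $D^2$, holomorphic on the interior" gives genuine Schwarz reflection at boundary points mapping into the \emph{open} edges of $Q$ (straightforward), and handle the corner $0 = f(1)$ and any points mapping to the corner separately, since reflection fails there. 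I expect the delicate part to be ruling out the possibility that $f$ maps the entire boundary circle into $\partial Q$ (the two coordinate axes): in that case one argues that $f$ would be forced, by reflecting repeatedly, to extend to an entire bounded function, hence constant, again yielding $f \equiv 0$.
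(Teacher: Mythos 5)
There is a genuine gap, and it is the same one in both of your routes: neither uses the smoothness of $f$ up to the boundary, and the lemma is \emph{false} without it. The map $f_0(z) = e^{i\pi/4}\bigl(\tfrac{1-z}{3+z}\bigr)^{1/2}$ (principal branch) is continuous on $D^2$, holomorphic and non-vanishing on the interior, maps $D^2$ into $Q$, and satisfies $f_0(1)=0$; it is merely H\"older-$\tfrac12$ rather than $C^1$ at $1$. So any argument that only invokes continuity of $f$ on $D^2$, as yours does, proves a false statement. In particular the pivotal claim of your first route --- that a holomorphic function on the disc, continuous on $D^2\setminus\{1\}$, with values in a sector of opening $\pi/2$, cannot blow up at the boundary point $1$ --- is false: $1/f_0 = e^{-i\pi/4}\bigl(\tfrac{3+z}{1-z}\bigr)^{1/2}$ does exactly that (more simply, $\tfrac{1+z}{1-z}$ maps $D^2$ into a half-plane and blows up at $1$). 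Phragm\'en--Lindel\"of constrains functions \emph{defined on} a sector, not functions \emph{with values in} one. Your second route rests on the unwarranted assumption that $f$ carries $\partial D^2$ into $\partial Q$ (nothing in the hypotheses forces the boundary values onto the axes), and the dichotomy you propose --- that a positive-measure set of boundary values in the interior of $Q$ forces constancy --- is unjustified and untrue. A repair along your first route would have to quantify the rate of decay of $|f|$ at $1$ using the derivative of $f$ there (a $C^1$ function vanishing at $1$ decays like $|1-z|$, whereas the extremal map of $D^2$ into $Q$ vanishing at $1$ decays only like $|1-z|^{1/2}$) and play this off against a harmonic-measure or angular-derivative estimate; you have not done this.

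The paper's argument is different and entirely local. Writing $f=u+iv$, the function $uv=\tfrac12\operatorname{Im}(f^2)$ is harmonic on $D^2$, nonnegative because $f(D^2)\subset Q$, strictly positive on the open disc (a non-constant $f$ is an open map, so it sends the open disc into the open quadrant), and vanishes at $1$. The Hopf lemma then makes the outward normal derivative of $uv$ at $1$ strictly negative, while the product rule, together with $u(1)=v(1)=0$ and the finiteness of $\partial_\nu u(1)$ and $\partial_\nu v(1)$ (this is precisely where the smoothness hypothesis enters), makes that derivative zero --- a contradiction.
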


\noindent The intuition is that the derivative of $f$ should blow up near $1$, since there is a corner point of angle less than $\pi$.  For the following elegant argument we are indebted to Steve Bell.

\begin{proof}
	Suppose that there were such an $f$.  Writing $u$ and $v$ for its real and imaginary parts we see that $uv$ (which is half the imaginary part of $f^2$), is a harmonic function on $D^2$ which attains its minimum value of $0$ at $1$.  The Hopf Lemma then implies that the normal derivative of $uv$ at $1$ is strictly negative.  However, the product rule implies that this derivative vanishes.
\end{proof}

Now we turn to the issue of bubbling at $\Delta_\gamma$.  In terms similar to those in which we phrased the preceding discussion, let us suppose that, for $0 \leq r < 1$
\[ u_r \colon (-1,1) \times [0,1) \longrightarrow \bC^2 \setminus \Delta_\bC \]
is a family of smooth maps
such that
$u_r((-1,1)\times\{0\}) \subset \gamma \times \gamma$, and
such that each $u_r$ is holomorphic on its interior.  We further suppose that the Gromov limit as $r \rightarrow 1$ of these maps is a smooth map
\[ u_1  \colon (-1,1) \times [0,1) \longrightarrow \bC^2 {\rm ,} \]
holomorphic on its interior, meeting the diagonal only once, say at $u_1(0,0) = (p,p) \in \Delta_\gamma$, along with a disc bubbling off
\[ b \colon D^2 \longrightarrow \bC^2 \]
which meets the diagonal again only at the single point $b(1) = (p,p)$.  It is this situation that we now wish to exclude.

\begin{prop}[No diagonal bubbling]
	\label{prop:bubba}
	There is no bubbling at the diagonal.
\end{prop}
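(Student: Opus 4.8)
The plan is to reduce the statement to \Cref{prop:boundary_touching}. Suppose, for a contradiction, that such a Gromov limit occurs, consisting of a main component $u_1$ and a nonconstant disc bubble $b \colon D^2 \to \bC^2$, joined at a node with $u_1(0,0) = b(1) = (p,p) \in \Delta_\gamma$; as in the proof of \Cref{prop:boundary_touching} we may rotate coordinates so that $T_p\gamma = e^{i\pi/4}\bR$. Since $b$ meets $\Delta_\bC$ only at the boundary point $b(1)$, its interior is disjoint from $\Delta_\bC$; since $b$ is nonconstant, $\pid \circ b$ is not identically zero; and, being a disc bubble at a boundary point, $b$ has its entire boundary on the Lagrangian $\gamma \times \gamma$. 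Because the almost complex structure is standard near $\Delta_\bC$, the restriction of $b$ to a sufficiently small half-disc neighborhood $H \subset D^2$ of the point $1 \in \partial D^2$ --- whose image then lies near $(p,p)$ --- is honestly holomorphic on $\mathrm{int}(H)$.

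The remaining ingredient is an approximating family. The bubble $b$ is produced by conformal rescaling of the maps $u_r$ centered at the bubbling point, and the analysis of Gromov compactness near the node supplies, for $r$ sufficiently close to $1$, conformal embeddings $\iota_r \colon H \hookrightarrow \mathrm{domain}(u_r)$ carrying the boundary arc of $H$ into the boundary of the domain, such that the maps $v_r := u_r \circ \iota_r$ converge to $b|_H$ --- in $\cloc$ on the open part, and uniformly on compact subsets of the boundary arc by the standard elliptic boundary regularity for the Cauchy--Riemann equation with Lagrangian boundary condition. Each $v_r$ is holomorphic on its interior, has boundary on $\gamma \times \gamma$, and has image contained in the image of $u_r$, hence disjoint from $\Delta_\bC$ for $r < 1$. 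Identifying $H$ with $(-1,1) \times [0,1)$ so that $1$ corresponds to $(0,0)$ and the boundary arc to $(-1,1)\times\{0\}$, and reparametrizing the parameter so that the family $\{v_r\}$ with limit $b|_H$ runs over $[0,1]$, this data satisfies the hypotheses of \Cref{prop:boundary_touching}. That proposition then tells us that the image of $b|_H$ is disjoint from $\Delta_\gamma$, contradicting $b(1) = (p,p) \in \Delta_\gamma$.

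The only real work is in the second paragraph: invoking the bubbling analysis of Gromov compactness to realize $b$ near the node as a limit of rescaled pieces of the $u_r$ on a fixed half-disc, with the asserted convergence promoted up to the boundary arc. (Controlling behavior near the node is the delicate point; that conformal reparametrization preserves holomorphicity, the Lagrangian boundary condition, and disjointness from $\Delta_\bC$ is immediate.) Once that bookkeeping is in place, the analytic core of the argument --- projecting to the difference coordinate via $\pid$, observing that $\pid(\gamma \times \gamma)$ is confined to a narrow bowtie around the tangent line $T_p\gamma$ near the origin, and finishing with the Hopf-lemma obstruction of \Cref{lem:hopf} --- is furnished without change by the proof of \Cref{prop:boundary_touching}.
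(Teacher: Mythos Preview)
Your reduction to \Cref{prop:boundary_touching} via a rescaled family $v_r = u_r \circ \iota_r$ has a genuine gap at exactly the point you flag as ``delicate.'' In the Gromov--compactness rescaling that produces the bubble $b$, the rescaled maps converge to $b$ only on compacta of $D^2 \setminus \{1\}$, i.e.\ \emph{away from the node}. In your half-strip identification the node is $(0,0)$, so you do not have uniform convergence of $v_r$ to $b|_H$ on any interval $[-\epsilon,\epsilon]\times\{0\}$ containing it. But the proof of \Cref{prop:boundary_touching} uses convergence on precisely such an interval --- first in the footnote (to force $\arg(\pid \circ u_1)$ into a single sector $[\pi/8,3\pi/8]$) and then in the winding/component comparison between $\pid \circ u_r(C)$ and $\pid \circ u_1(C)$. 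Worse, since $b|_{\partial D^2}$ is a smooth loop through $(p,p)$, its $\pid$-image generically approaches $0$ from \emph{opposite} directions on the two sides of the node, so the one-sector hypothesis implicit in \Cref{prop:boundary_touching} can genuinely fail for $b$. The embeddings $\iota_r$ you posit cannot be the na\"ive rescaling (that sends the node to infinity, not into the domain of $u_r$), and no alternative choice will produce uniform convergence at the node.

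The paper's proof confronts this head-on by working with the main component $u_1$ and the original family $u_r$, and making a case split on whether $\arg(\pid \circ u_1([-\epsilon,\epsilon]\times\{0\}))$ lands in one sector or both. In the one-sector case the Hopf-lemma argument of \Cref{prop:boundary_touching} runs as before: although $u_r$ fails to converge to $u_1$ at $(0,0)$, the boundary image $\pid \circ u_r([-\epsilon,\epsilon]\times\{0\})$ still lies in that single narrow sector (because both the $u_1$-piece and the $b$-piece of the limit do), which is enough for the component comparison. In the complementary two-sector case one argues instead that the interior image $\pid \circ u_1((-1,1)\times(0,1))$ must contain points in the directions $e^{-i\pi/4}$ or $e^{3i\pi/4}$ arbitrarily close to $0$, and the same component argument then forces $0 \in \pid \circ u_r(R)$. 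Your instinct to reuse \Cref{prop:boundary_touching} is natural, but the node is exactly where the required convergence breaks down, and some form of this case analysis appears unavoidable.
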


\begin{proof}
	Let us analyse the limit $u_1 \# b$ and, similarly to before, consider the projections
	\[ \pid \circ u_1 \colon R \longrightarrow \bC \,\,\, {\rm and} \,\,\, \pid \circ b \colon D^2 \longrightarrow \bC{\rm ,} \]
	where $R \subset (-1,1) \times [0,1)$ is a region biholomorphic to the unit disc, with smooth boundary $C = \partial R \supset [-1/2,1/2] \times \{ 0 \}$.
	Without loss of generality, we assume that $T_p\gamma = e^{i\pi/4}\bR \subset \bC$.

	We choose some small $\epsilon > 0$ so that we have both
	\[ \arg(\pid \circ u_1 ([-\epsilon, \epsilon] \times \{ 0 \})) \subset [\pi/8, 3\pi/8] \cup [-7\pi/8, -5\pi/8] {\rm ,} \]
	and
	\[ \arg(\pid \circ b  (\{ e^{i\theta} : -\epsilon \leq \theta \leq \epsilon \}) ) \subset [\pi/8, 3\pi/8] \cup [-7\pi/8, -5\pi/8] {\rm .} \]

	Focussing on $u_1$, consider the case that $\arg(\pid \circ u_1 ([-\epsilon, \epsilon] \times \{ 0 \}))$ is contained in just one of the intervals on the right-hand side of this statement, say without loss of generality
	\[ \arg(\pid \circ u_1 ([-\epsilon, \epsilon] \times \{ 0 \})) \subset [\pi/8, 3\pi/8] {\rm .}\]
		
	(Then we also necessarily have
	\[ \arg(\pid \circ b  (\{ e^{i\theta} : -\epsilon \leq \theta \leq \epsilon \}) ) \subset [\pi/8, 3\pi/8] {\rm ,} \]
	since the maps $u_r$ which limit to $u_1 \# b$ do not meet the diagonal $\Delta_\bC$.)
	
	Now our argument before (using the Hopf Lemma) applies again to rule out any restriction of $\pid \circ u_1$ to a disc with smooth boundary including $(0,0)$ that maps to the first quadrant $Q \subset \bC$.  Again, this means that for all small enough $\delta > 0$, we have $\delta e^{-3i\pi/4} \in \pid \circ u_1(R)$; and this contradicts the assumption that $u_r$ does not meet $\Delta_{\bC}$ as before.
	
	On the other hand, in the complementary case, we must have, for $\delta > 0 $ small enough, either that $\delta e^{-i\pi/4} \in \pid \circ u_1 ((-1,1) \times (0,1))$ or that $\delta e^{3i\pi/4} \in \pid \circ u_1 ((-1,1) \times (0,1))$.  Then, since for $r$ close to $1$ we have that $\delta e^{-i \pi/4}$ and $\delta e^{\i \pi/4}$ lie in the same component of $\bC \setminus \pid \circ u_r (C)$ as does $0$, we must have that the image of $u_r$ meets the diagonal $\Delta_\bC$, contradicting our assumptions.
\end{proof}
\subsection{Duality.}
\label{subsec:dualityofJF}
In this subsection we establish a duality result for Jordan Floer homology.  That such a result should exist is unsurprising: at least generically, the Jordan Floer complex $\JFC(\gamma, \theta)$ for $0 < \theta < \pi$ is generated by inscriptions of a $\theta$-rectangles in $\gamma$.  But of course, inscribed $\theta$-rectangles are exactly $(\pi - \theta)$-rectangles.  So we would expect to see some kind of relationship between the two complexes, and thus a correspondence between $\JF(\gamma, \theta)$ and $\JF(\gamma, \pi-\theta)$, and their spectral invariants.

\begin{prop}
	\label{prop:duality}
	If $\gamma$ is a smooth Jordan curve and $0 < \theta < \pi$, then
	\[ \ell_1(\gamma, \theta) = \area(\gamma) - \ell_2(\gamma, \pi - \theta) {\rm .} \]
\end{prop}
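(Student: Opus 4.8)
The plan is to deduce the statement from Poincar\'e duality for Lagrangian Floer homology, together with the tautology that an inscribed $\theta$-rectangle of $\gamma$ \emph{is} an inscribed $(\pi-\theta)$-rectangle --- one only relabels the two diagonals. This furnishes a canonical bijection between the generating set $\cG$ of $\JFC(\gamma,\theta)$ and that of $\JFC(\gamma,\pi-\theta)$. I would first treat a generic angle $\theta$, so that both complexes are honest Floer complexes of the kind constructed in \Cref{subsec:smoothing}; the resulting identity of spectral invariants then passes to all $\theta\in(0,\pi)$ by continuity in $\theta$ (Property~(4) of \Cref{thm:basic_properties}).

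Recall that $\JFC(\gamma,\theta)$ is the Lagrangian Floer complex of a pair of Lagrangian tori in $\bC^2\setminus\Delta_\bC$: one is $L_0=\gamma\times\gamma$, the other a torus $L_\theta$ obtained from $L_0$ --- after a small Hamiltonian perturbation supported away from $\Delta_\bC$ --- by the linear symplectomorphism $R_{-\theta}$ that fixes the midpoint coordinate $\pim$ and rotates the difference coordinate $\pid$ by $-\theta$; the transverse intersection points $L_0\cap L_\theta$ are the inscribed $\theta$-rectangles, and $\cA_\theta$ is the symplectic action. The reflection $(s,t)\mapsto(-s,1-t)$ of finite-energy strips identifies the Floer data of $(L_0,L_\theta)$ with that of $(L_\theta,L_0)$ --- the two families of moduli spaces being mirror images of each other, so that no new degeneration onto $\Delta_\bC$ is introduced --- and yields the Poincar\'e duality isomorphism $HF_k(L_0,L_\theta)\cong HF_{2-k}(L_\theta,L_0)^\vee$ over $\bF_2$; at chain level $\JFC(\gamma,\theta)$ and $CF(L_\theta,L_0)$ have the same generators, adjoint differentials, degrees related by $k\mapsto 2-k$, and actions by $\cA_{L_\theta,L_0}(g)=c_0-\cA_\theta(g)$. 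On the other hand, $R_{-\theta}$ carries the pair $(L_0,L_{\pi-\theta})$ onto $(L_\theta,L_0)$: by construction $R_{-\theta}(L_0)=L_\theta$, while $R_{-\theta}(L_{\pi-\theta})=R_{-\theta}R_{-(\pi-\theta)}(L_0)=R_{-\pi}(L_0)=L_0$, since rotating $\pid$ by $\pi$ is the transposition $(z,w)\mapsto(w,z)$, which preserves $\gamma\times\gamma$. Transporting the Floer package of $\JFC(\gamma,\pi-\theta)=CF(L_0,L_{\pi-\theta})$ through the symplectomorphism $R_{-\theta}$ identifies it with $CF(L_\theta,L_0)$, changing actions by a further additive constant. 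Composing the two identifications exhibits $\JFC(\gamma,\pi-\theta)$ as the filtered dual of $\JFC(\gamma,\theta)$: the differentials are adjoint, the bijection of generators is exactly the ``same rectangle'' map above, degrees are matched by $k\mapsto 2-k$ up to a fixed shift, and corresponding generators $g\leftrightarrow g'$ obey $\cA_\theta(g)+\cA_{\pi-\theta}(g')=C$ for a constant $C$ depending only on the (fixed, $\theta$-independent) normalization of the action.

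It follows that the induced pairing on homology is perfect and graded, so it pairs $\JF_k(\gamma,\theta)$ with $\JF_{c-k}(\gamma,\pi-\theta)$ for a fixed integer $c$; since each of these groups is $\bF_2$ in degree $1$ and $\bF_2$ in degree $2$ and vanishes otherwise, perfectness forces $c=3$, and in particular $\alpha_1(\gamma,\theta)$ is paired with $\alpha_2(\gamma,\pi-\theta)$. The standard duality for spectral invariants --- dualizing a filtered $\bF_2$-complex while reversing the action via $\cA\mapsto C-\cA$ turns the spectral invariant of a homology class into $C$ minus that of its dual class, cf.\ \cite{leczap} --- then gives $\ell_1(\gamma,\theta)=C-\ell_2(\gamma,\pi-\theta)$ for generic $\theta$, hence for all $\theta\in(0,\pi)$ by continuity. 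To identify $C$, note that $\theta\mapsto\ell_1(\gamma,\theta)+\ell_2(\gamma,\pi-\theta)$ is continuous on $[0,\pi]$ by Property~(4) and equals the $\theta$-independent constant $C$ on the dense set of generic angles, so it equals $C$ identically; evaluating at $\theta=\pi$, where by convention $\ell_1(\gamma,\pi)=\area(\gamma)$ and $\ell_2(\gamma,0)=0$, gives $C=\area(\gamma)$. Therefore $\ell_1(\gamma,\theta)=\area(\gamma)-\ell_2(\gamma,\pi-\theta)$ for all $0<\theta<\pi$.

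\textbf{Main obstacle.} The delicate part is the second step together with the bookkeeping of the additive constants in the action. One must check that, after the Hamiltonian perturbations needed to achieve transversality and to keep strips off $\Delta_\bC$, the symplectomorphism $R_{-\theta}$ genuinely intertwines the auxiliary Floer data --- almost complex structures, Hamiltonians, reference paths --- chosen for $\JFC(\gamma,\theta)$ and for $\JFC(\gamma,\pi-\theta)$, so that the induced bijection of generators really is the naive ``$\theta$-rectangle $=$ $(\pi-\theta)$-rectangle'' map and the action discrepancy is a true $\theta$-independent constant. The Poincar\'e-duality (strip-reversal) formalism itself goes through essentially automatically in $\bC^2\setminus\Delta_\bC$, since the reversed moduli spaces are the reflections of the original ones; what makes this legitimate is precisely that no strip degenerates onto $\Delta_\gamma$, which is the content of \Cref{prop:boundary_touching} and \Cref{prop:bubba}.
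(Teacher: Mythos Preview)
Your structural plan matches the paper's: both exploit that a rotation about the diagonal carries the ordered pair of Lagrangians for angle $\theta$ onto the pair for angle $\pi-\theta$ with roles swapped, whence $\JFC(\gamma,\pi-\theta)$ is the filtered dual of $\JFC(\gamma,\theta)$, degrees relate by $k\mapsto 3-k$, and $\cA_\theta(g)+\cA_{\pi-\theta}(g')=C$ for some constant~$C$. The paper also treats the non-generic case by matching small Hamiltonian perturbations $h$ with $-\wt h$, just as you anticipate in your ``main obstacle''.

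Where the arguments genuinely diverge is in identifying $C$. You assert that $C$ is $\theta$-independent, then evaluate at the endpoint $\theta=\pi$ via the conventions $\ell_1(\gamma,\pi)=\area(\gamma)$ and $\ell_2(\gamma,0)=0$. But the $\theta$-independence is precisely what your ``main obstacle'' paragraph admits you have not checked: the transport by $R_{-\theta}$ is itself $\theta$-dependent, the Hamiltonian term in the action (an integral of $\tfrac14|z-w|^2$ over a trajectory whose length depends on $\theta$) is $\theta$-dependent, and the preferred cappings --- required to miss $\Delta_\bC$ --- do not behave transparently under the composite of strip reversal and $R_{-\theta}$. Without $\theta$-independence, continuity in $\theta$ only tells you that $C(\theta)$ extends continuously to $[0,\pi]$, not that its value at $\theta=\pi$ determines it elsewhere. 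So as written there is a real gap.

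The paper bypasses this entirely by a direct computation of $\cA(p)+\cA(p')$ at each fixed generic $\theta$: glue the Hamiltonian trajectory $\tau_p$ (time~$\theta$) with $R_{-\theta}(\tau_{p'})$ (time~$\pi-\theta$) to obtain a full time-$\pi$ trajectory $\tau$ starting and ending on $\gamma\times\gamma$; glue the two preferred cappings to a capping $D$ of $\tau$ disjoint from $\Delta_\bC$; then double via $R_\pi$ to replace everything by closed curves, and read off that $2(\cA(p)+\cA(p'))$ is the symplectic area bounded by a curve on $\gamma\times\gamma$ of winding number~$1$ about $\Delta_\bC$, namely $2\,\area(\gamma)$. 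This geometric step --- the winding-number identification using that the preferred cappings avoid the diagonal --- is the substantive content your proposal is missing, and it is exactly what pins $C$ to $\area(\gamma)$ without any appeal to varying~$\theta$.
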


\begin{proof}
	The Jordan Floer chain complex $\JFC(\gamma, \theta)$ is generated by trajectories of a certain Hamiltonian vector field that begin and end on $\gamma \times \gamma$.  An alternative point of view is that the complex is generated by the transverse intersection points of $\gamma \times \gamma$  with its preimage under the time $1$ Hamiltonian flow.\footnote{In considering the preimage here we follow Auroux's notes \cite{aurouxintro}.}  In this latter point of view, the differential counts strips limiting at their ends to intersection points, and with boundary conditions on both Lagrangians.
	
	Generically, the time $1$ Hamiltonian flow is given by the rotation
	\[ R_{-\theta} \colon \bC^2 \longrightarrow \bC^2 \colon (z,w) \longmapsto \left( \frac{z+w}{2} + e^{-i \theta} \frac{z-w}{2}, \frac{z+w}{2}  - e^{-i \theta} \frac{z-w}{2} \right) {\rm ,} \]
	which is a rotation by angle $-\theta$ that leaves the diagonal $\Delta_\bC$ fixed.  So the alternative point of view is to consider intersection points between $\gamma \times \gamma$ and $R_\theta(\gamma \times \gamma)$.
	
	The genericity condition is that the Lagrangians $\gamma \times \gamma$ and $R_\theta(\gamma \times \gamma)$ intersect transversally away from $\Delta_\bC$.  Assuming for a moment that this is satisfied, we notice that the symplectomorphism $R_{\pi - \theta}$ takes the Lagrangian $\gamma \times \gamma$ to $R_{\pi - \theta}(\gamma \times \gamma)$ and takes the Lagrangian $R_\theta(\gamma \times \gamma)$ to $R_\pi (\gamma \times \gamma) = \gamma \times \gamma$.  Thus we see that in this case, assuming that we pick almost complex structures compatibly under $(R_{\pi - \theta})_*$, the complexes $\JFC(\gamma, \theta)$ and $\JFC(\gamma, \pi - \theta)$ are genuinely dual -- there is a correspondence between intersection points
	\[ p \in (\gamma \times \gamma) \cap R_\theta(\gamma \times \gamma) \longleftrightarrow p' = R_{\pi - \theta}(p) \in (\gamma \times \gamma) \cap R_{\pi - \theta}(\gamma \times \gamma) \]
	and furthermore if $u \colon \Sigma \longrightarrow \bC^2$ is a strip connecting $p$ to $q$ for $p,q \in (\gamma \times \gamma) \cap R_\theta(\gamma \times \gamma)$, then $u' = R_{\pi - \theta} \circ u$ is a strip connecting $q'$ to $p'$.
	
	It is immediate that the relative differences in action and in Maslov index between $p$ and $q$ agree with the relative differences between $q'$ and $p'$.  In fact, the absolute Maslov index $\mu(p')$ is $3 - \mu(p)$, since $\JF$ is of dimension $1$ in degrees $1$ and $2$, and trivial outside these degreees.  Moreover, we have that the actions satisfy $\cA(p) + \cA(p') = \area(\gamma)$, as we now show.
	
	The action of a path is expressed as the sum of two terms: one is the integral of the Hamiltonian along the trajectory, while the other is the symplectic area of a `capping' disc cobounded by the Lagrangian (in this case $\gamma \times \gamma$) and the path.  In the definition of the Jordan Floer chain complex, these latter discs were taken to have trivial intersection with $\Delta_\bC$.
	
	Suppose that $\tau_p$ and $\tau_{p'}$ are the trajectories corresponding to $p$ and its dual $p'$.  They can each be considered to be flowlines associated to the Hamiltonian
	\[ H \colon \bC^2 \longrightarrow \bR \colon (z,w) \longmapsto \frac{\vert z - w \vert^2}{4} {\rm ,}\]
	with $\tau_p \colon [0,\theta] \longrightarrow \bC^2$ being a flowline for time $\theta$ and $\tau_{p'} \colon [0, \pi - \theta] \longrightarrow \bC^2$ being a flowline for time $\pi - \theta$.
	Furthermore $R_{-\theta} \tau_{p'}(0) = \tau_p(\theta)$ and $R_{-\theta}(\tau_{p'}(\pi - \theta)) \in \gamma \times \gamma$.  So if we glue together the flowlines $\tau_p$ and $R_{-\theta}(\tau_{p'})$, we obtain a time $\pi$ flowline $\tau$ that starts and ends on $\gamma \times \gamma$ (as it has to since $R_{- \pi} = R_\pi$ is an involution on $\bC^2$ satisfying $R_{\pi} (\gamma \times \gamma) = \gamma \times \gamma$).
	
	We can also glue together the two cobounding discs to obtain a singular disc $D \subset \bC^2$ which is cobounded by $\tau$ and $\gamma \times \gamma$, and that is disjoint from $\Delta_\bC$.  We have
	\[ \cA(p) + \cA(p') = \int_0^\pi H \circ \tau(t) dt - \int_D \omega {\rm ,} \]
	so that
	\[ 2(\cA(p) + \cA(p')) = \int_0^{2 \pi} H \circ (\tau \sqcup R_{\pi} \tau)(t) dt - \int_{D \sqcup R_{\pi}(D)} \omega \]
	where we write $\tau \sqcup R_\pi \tau$ for the concatenation of $\tau$ and $R_\pi \circ \tau$.
	Now the first integral is just the symplectic area of the planar disc bounded by the circle $\tau \sqcup R_\pi \tau$, so that the whole expression is the symplectic area bounded by the curve formed by the parts of the boundary of $D \sqcup R_\pi(D)$ lying on $\gamma \times \gamma$.  Now curves lying on $\gamma \times \gamma$ which avoid the diagonal $\Delta_\bC$ fall into isotopy classes depending on their winding number around $\bC$.  In this case, the winding number around $\bC$ is $1$, since the planar disc bounded by $\tau \sqcup R_\pi \tau$ meets $\bC$ once transversely.  The symplectic area bounded by each curve in a given isotopy class is proportional to this winding number.  Thus we see that the whole expression evaluates to $2 \area(\gamma)$, and we are done.

	If one does not assume genericity of the Jordan curve $\gamma$, then the Jordan Floer homology $\JF(\gamma,\theta)$ is defined to be the directed limit of the homologies of Jordan Floer complexes that include Hamiltonian perturbation terms, the limit being taken as the perturbations tend to $0$.  We shall show that small Hamiltonians $h$ applied to the `$\theta$' Lagrangians correspond to small Hamiltonians $-\wt{h}$ which can be applied to the `$\pi - \theta$' Lagrangians, resulting in a dual complex.
	
	Suppose, then, that one achieves transversality of $\gamma \times \gamma$ and $R_\theta(\gamma \times \gamma)$ away from $\Delta_\gamma$ by applying the Hamiltonian perturbation $\phi_h \colon \bC^2 \longrightarrow \bC^2$ which is the time $1$ flow of a small Hamiltonian $h \colon \bC^2 \longrightarrow \bR$ supported away from $\Delta_\bC$.  (For convenience, in this discussion we are assuming that we achieve transversality by applying $\phi_h$ following $R_\theta$, and that $h$ is time-independent).
	
	In other words, $\gamma \times \gamma$ and $\phi_h \circ R_\theta (\gamma \times \gamma)$ are transverse away from $\Delta_\gamma$.  Then we see that $R_{\pi - \theta} (\gamma \times \gamma)$ and $R_{\pi - \theta} \circ \phi_h \circ R_\theta (\gamma \times \gamma)$ are also transverse away from $\Delta_\gamma$.
	
	Now $R_{\pi - \theta} \circ \phi_h \circ R_\theta = \phi_{\wt{h}} \circ  R_\pi$ where we define the Hamiltonian $\wt{h} = h \circ R_{\theta - \pi} \colon \bC^2 \longrightarrow \bR$.  Since $\gamma \times \gamma$ is invariant under $R_\pi$, we have that $R_{\pi - \theta} (\gamma \times \gamma)$ and $\phi_{\wt{h}}(\gamma \times \gamma)$ are transverse away from $\Delta_\gamma$.
	Equivalently, $\gamma \times \gamma$ and $\phi_{-\wt{h}} \circ R_{\pi - \theta} (\gamma \times \gamma)$ are transverse away from $\Delta_\gamma$.
	
	So $\phi_{-\wt{h}}$ is a small Hamiltonian perturbation that can be applied to achieve transversality of $\gamma \times \gamma$ and $R_{\pi - \theta} (\gamma \times \gamma)$.  Applying this perturbation results in a pair of Lagrangians, transverse away from $\Delta_\gamma$, that is symplectomorphic (via $\phi_{\wt{h}}$) to the pair  $\phi_{\wt{h}}(\gamma \times \gamma)$, $R_{\pi - \theta} (\gamma \times \gamma)$.  But this pair becomes the pair $\phi_h \circ R_\theta (\gamma \times \gamma)$, $\gamma \times \gamma$ after applying the rotation $R_{\theta - \pi}$ since
	\[ R_{\theta - \pi} \circ \phi_{\wt{h}}(\gamma \times \gamma) = \phi_{\wt{h} \circ R_{\pi-\theta}} \circ R_{\theta-\pi}(\gamma \times \gamma) = \phi_{h} \circ R_{\theta}(\gamma \times \gamma) {\rm .} \]
	
	Finally we have arrived at a familiar situation, namely the Lagrangian pair $\gamma \times \gamma$, $R_\theta(\gamma \times \gamma)$ (now perturbed by $\phi_h$) is symplectomorphic to the Lagrangian pair $R_{\pi - \theta}(\gamma \times \gamma)$, $\gamma \times \gamma$ (now perturbed by $\phi_{-\wt{h}}$).  
	These perturbed complexes are again precisely dual to each other (given compatible choices of almost complex structure) with dual actions and Maslov indices satisfying the same identities.
\end{proof}

\section{Deformations of the Jordan curve.}
\label{sec:deformation}
In this section we establish firstly that the spectral invariants are continuous with respect to deformations of the Jordan curve in Subsections \ref{subsec:cty} and \ref{subsec:bifurcationmethod}.  In Subsection \ref{subsec:hoferdistancebounds} we refine the statement of continuity by bounding the variation of the spectral invariant in terms of the Hofer norm.  In Subsection \ref{subsec:pwlinear} we turn to piecewise linear Jordan curves, giving a definition of the spectral invariants associated to such curves and verifying continuity of the spectral invariant in isotopies through these curves.  Finally in Subsection \ref{subsec:grace} we turn to isotopies of nested Jordan curves in which each curve only admits elegant inscriptions of $\theta$-rectangles, and bound the variation of the spectral invariants in this case.
\subsection{Hamiltonian isotopy of the Lagrangians.}
\label{subsec:cty}

Suppose that $\gamma_r \subset \bC$ for $0 \leq r \leq 1$ is a smooth $1$-parameter family of smooth Jordan curves, each of constant area $A = \area(\gamma_r)$.
\begin{lem}
	\label{lem:equiareal_hamiltonian}
	There exists a $1$-parameter family of smooth Hamiltonians
	\[ H_r \colon \bC \longrightarrow \bR \]
	with associated Hamiltonian flows
	\[ \phi_r \colon \bC \longrightarrow \bC \]
	such that $\phi_0$ is the identity and $\phi_r(\gamma_0) = \gamma_r$ for all $r \in [0,1]$.
\end{lem}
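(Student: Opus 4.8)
The plan is a Moser-type argument. First I would produce an arbitrary smooth ambient isotopy of $\bC$ carrying $\gamma_0$ to $\gamma_r$, then correct it to be area-preserving by a version of Moser's trick relative to $\gamma_0$, and finally use that a compactly supported symplectic isotopy of $\bC \cong \bR^2$ starting at the identity is automatically Hamiltonian. \emph{Step 1 (a rough isotopy).} Choose a smooth family of parametrizations $\iota_r \colon S^1 \hookrightarrow \bC$ of the curves $\gamma_r$. The isotopy extension theorem furnishes a smooth, compactly supported ambient isotopy $\Psi_r \colon \bC \to \bC$ with $\Psi_0 = \mathrm{id}$ and $\Psi_r \circ \iota_0 = \iota_r$, so that $\Psi_r(\gamma_0) = \gamma_r$. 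Fix $R$ with $\mathrm{supp}(\Psi_r) \subset D_R$ for all $r$, where $D_R$ is the open disc of radius $R$, and let $\Omega_0$ be the bounded component of $\bC \setminus \gamma_0$. Since $\Psi_r$ is isotopic to the identity it carries $\Omega_0$ onto the bounded component of $\bC\setminus\gamma_r$, so with $\omega = dx\wedge dy$ the standard area form and $\beta_r := \Psi_r^*\omega$ we have $\int_{\Omega_0}\beta_r = \area(\gamma_r) = A$ for all $r$; also $\beta_0 = \omega$, $\beta_r = \omega$ outside $D_R$, and $\int_{D_R}\beta_r = \int_{D_R}\omega$ is independent of $r$.

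\emph{Step 2 (Moser correction).} I would construct a smooth family $g_r$ of diffeomorphisms of $\overline{D_R}$ with $g_0 = \mathrm{id}$, $g_r^*\omega = \beta_r$, $g_r(\gamma_0) = \gamma_0$, and $g_r = \mathrm{id}$ near $\partial D_R$, and then extend each $g_r$ by the identity to $\bC$. Following Moser, it suffices to exhibit a smooth family of $1$-forms $\eta_r$ on $\overline{D_R}$ with $d\eta_r = \tfrac{d}{dr}\beta_r$, with $\eta_r$ vanishing near $\partial D_R$, and with the pullback of $\eta_r$ to $\gamma_0$ identically zero: indeed, solving the implicitly defined ODE $\tfrac{d}{dr}g_r = v_r\circ g_r$, $g_0 = \mathrm{id}$, where $\iota_{v_r}\omega = (g_r^{-1})^*\eta_r$, produces $g_r$ with $g_r^*\omega = \beta_r$, and here $v_r$ vanishes near $\partial D_R$ and is tangent to $\gamma_0$ (since $\eta_r$, hence $(g_r^{-1})^*\eta_r$, pulls back to zero there), so the flow exists for all $r\in[0,1]$ on the compact manifold with boundary $\overline{D_R}$, fixes $\gamma_0$ setwise, and is the identity near $\partial D_R$. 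To build $\eta_r$: the $2$-form $\tfrac{d}{dr}\beta_r$ is supported in $D_R$ with total integral $0$ over $\overline{D_R}$ (since $\int_{D_R}\beta_r$ is constant), so it has a primitive $\eta^{(0)}_r$ vanishing near $\partial D_R$, depending smoothly on $r$. The equal-area hypothesis enters here:
\[ \int_{\gamma_0}\eta^{(0)}_r = \int_{\Omega_0}\tfrac{d}{dr}\beta_r = \tfrac{d}{dr}\!\int_{\Omega_0}\beta_r = 0 , \]
so the closed $1$-form $\eta^{(0)}_r|_{\gamma_0}$ on $\gamma_0 \cong S^1$ has zero integral and hence equals $dh_r$ for a smooth family of functions $h_r$ on $\gamma_0$; extending $h_r$ to a smooth family $\widetilde h_r$ supported in a small neighbourhood of $\gamma_0$ disjoint from $\partial D_R$ and setting $\eta_r := \eta^{(0)}_r - d\widetilde h_r$ gives the required $1$-form.

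\emph{Step 3 (conclusion and main obstacle).} Put $\phi_r := \Psi_r \circ g_r^{-1}$. Then $\phi_0 = \mathrm{id}$, $\phi_r^*\omega = (g_r^{-1})^*\beta_r = \omega$, and $\phi_r(\gamma_0) = \Psi_r(\gamma_0) = \gamma_r$; so $\phi_r$ is a compactly supported symplectic isotopy of $\bR^2$ from the identity. Writing $X_r := (\tfrac{d}{dr}\phi_r)\circ\phi_r^{-1}$, the $1$-form $\iota_{X_r}\omega$ is closed and compactly supported, hence exact, so $\iota_{X_r}\omega = dH_r$ for a smooth family of compactly supported Hamiltonians $H_r \colon \bC \to \bR$, and $\phi_r$ is precisely the associated Hamiltonian flow. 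The main obstacle is Step 2: the correcting diffeomorphisms must preserve $\gamma_0$ exactly, which forces the Moser $1$-form to restrict trivially to $\gamma_0$, and this is possible precisely because $\gamma_0$ and $\gamma_r$ enclose equal area. The remaining technical points — uniform compactness of supports so that all flows are defined on $[0,1]$, and smooth dependence on $r$ of the auxiliary primitives, cutoffs, and extensions — are routine.
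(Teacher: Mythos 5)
Your proposal is correct and follows essentially the same route as the paper: isotopy extension to get a rough ambient isotopy, a Moser correction preserving $\gamma_0$ made possible because the equal-area hypothesis forces the Moser primitive to restrict to an exact form on $\gamma_0$ (so the correcting vector field can be made tangent to $\gamma_0$), and finally $H^1(\bR^2)=0$ to upgrade the symplectic isotopy to a Hamiltonian one. The only differences are cosmetic — you run Moser's flow directly in the parameter $r$ and track compact supports, whereas the paper flows in an auxiliary interpolation parameter $t$ via $\omega_{rt}=t\omega_r+(1-t)\omega$ and uses harmonic primitives for smoothness in $r$.
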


\Cref{lem:equiareal_hamiltonian} and its proof are inspired by the result of Akveld-Salamon that a pair of equal area disks on a symplectic 2-manifold are related by a Hamiltonian isotopy \cite[Proposition A.1]{akveldsalamonart}.
The argument follows standard lines (Moser's method), and we give details for the non-expert.

\begin{proof}
By the isotopy extension principle, there exists a smooth 1-parameter family $f_r \in \mathrm{Diff}(\bC)$, $r \in [0,1]$, such that $\gamma_r = f_r(\gamma_0)$.
Let $\omega$ denote the standard symplectic form on $\bC$.
Define $\omega_r := f_r^* \omega$ for all $r$.
Let $D$ denote the domain bounded by $\gamma_0$.

We will produce a smooth 1-parameter family $\psi_r \in \mathrm{Diff}(\bC)$ for $0 \leq r \leq 1$, such that
\begin{equation}
\label{e: psi_t}
\psi_r(D) =D \quad \textup{and} \quad \psi_r^* \omega_r = \omega \quad  {\rm for} \quad 0 \leq r \leq 1.
\end{equation}
Then $\phi_r := f_r \circ \psi_r$ satisfies
\[
\phi_r(D) = f_r \circ \psi_r(D) = f_r(D) \quad \textup{and} \quad \phi_r^* \omega = \psi_r^* f_r^* \omega = \psi_r^* \omega_r = \omega \quad {\rm for} \quad 0 \leq r \leq 1.
\]
So $\phi_r$ is a symplectic isotopy of $(\bR^2,\omega)$ such that $\phi_r(\gamma_0) = \gamma_r$ for $0 \leq r \leq 1$.

Let $X_r$ denote the vector field which generates the flow $\phi_r$.
The 1-form $\iota(X_r)\omega$ is exact, since $H^1(\bC) = 0$, so there exists a unique 1-parameter family of smooth functions $H_r : \bC \to \bR$ satisfying $H_r(0) = 0$ and $d H_r = \iota(X_r) \omega$ for $0 \leq r \leq 1$.
It follows that $\phi_r$ is a Hamiltonian isotopy with associated Hamiltonian $H_r$.

It remains to construct $\psi_r$.  Since $H^2(\bC) = 0$, there exists a 1-form $\alpha_r$ such that $d \alpha_r = \omega - \omega_r$ for $0 \leq r \leq 1$.
If we place a Riemannian metric on $\bC$, then we may specify $\alpha_r$ uniquely by the additional condition that it is harmonic.
Chosen in this manner, the forms $\alpha_r$ vary smoothly with $r$.

The assumption that each $\gamma_r$ encloses the same area translates into the assertion that
\[
\int_D \omega_r = \int_D \omega, \quad {\rm for} \quad 0 \leq r \leq 1.
\]
By Stokes's theorem, we have
\[
\int_{\del D} \alpha_r = \int_D d \alpha_r = \int_D (\omega - \omega_r) = 0, \quad {\rm for} \quad 0 \leq r \leq 1.
\]
It follows that $\alpha_r | (\del D)$ is exact for all $r$.
Hence $\alpha_r | (\del D) = d g_r$ for a smooth function $g_r : \del D \to \bR$ which is uniquely determined by the condition that $g_r(p) = 0$ for a choice of basepoint $p \in \del D$.
Moreover, the family of functions $g_r$ vary smoothly in $r$.
We extend $g_r$ to a smooth family of smooth functions $g_r : \bC \longrightarrow \bR$.
We then set $\beta_r = \alpha_r - d g_r$ for all $0 \leq r \leq 1$.
Then
\[
d \beta_r = \omega - \omega_r \quad \textup{and} \quad \beta_r |(\del D) \equiv 0, \quad {\rm for} \quad 0 \leq r \leq 1.
\]

Define
\[
\omega_{rt} = t \cdot \omega_r + (1-t) \cdot \omega \quad {\rm for} \quad  0 \leq r,t \leq 1.
\]
Observe that each $\omega_{rt}$ is a convex combination of positive area forms, so it is again a positive area form, and hence a symplectic form.
By nondegeneracy, it follows that we can uniquely define a vector field $X_{rt} \in \mathrm{Vect}(\bC)$, smooth in $0 \leq r,t \leq 1$, by the condition
\[
\iota(X_{rt}) \omega_{rt} = \beta_r.
\]
We define in turn a flow $\psi_{rt} \in \mathrm{Diff}(\bC)$ by the differential equation
\[
\frac{d}{dt} \psi_{rt} = X_{rt} \cdot \psi_{rt}
\]
with initial condition $\psi_{r0} = \mathrm{id}$ for $0 \leq r \leq 1$.

We shall show that $\psi_r := \psi_{r1}$ defines the desired isotopy.

First, observe that for a non-zero tangent vector $v$ to $\del D$, we have
\[
0 = \beta_t(v) = \iota(X_{rt})\omega_{rt}(v) = \omega_{rt}(X_{rt},v).
\]
Since $\omega_{rt}$ is nondegenerate and $\bC$ is 2-dimensional, it follows that $X_{rt}$ is proportional to $v$.
That is, $X_{rt}$ is tangent to $\del D$ for all $r$ and $t$; so it follows that the flow $\psi_{rt}$ preserves $\del D$.
Hence $\psi_{rt}(D) = D$ for all $r$ and $t$.
Taking $t=1$, we obtain $\psi_r(D) = D$ for all $t$, which gives the first part of \eqref{e: psi_t}.

Second, we shall show that
\begin{equation}
\label{e: derivative vanishes}
\frac{d}{dt} \psi_{rt}^* \omega_{rt} = 0.
\end{equation}
Assuming this is the case, also note that we have $\psi_{r0}^* \omega_{r0} = \mathrm{id}^* \omega = \omega$ for all $r$.
Since the derivative in $t$ vanishes, it follows that $\psi_r^* \omega_r = \psi_{r1}^* \omega_{r1} = \omega$ as well for all $r$, which gives the second part of \eqref{e: psi_t}. 

Hence we are left to verify equation \eqref{e: derivative vanishes}.
We have
\[
\frac{d}{dr} \psi_{rt}^* \omega_{rt} = \psi_{rt}^* \left( \mathcal{L}_{X_{rt}} \omega_{rt} + \frac{d \omega_{rt}}{dr} \right).
\]
Cartan's formula gives
\[
\mathcal{L}_{X_{rt}} \omega_{rt} = d \iota(X_{rt})\omega_{rt} + \iota(X_{rt}) d \omega_{rt}.
\]
The term $d \iota(X_{rt}) \omega_{rt}$ becomes $d \beta_r = \omega - \omega_r$ on substitution, while the second term vanishes, since $\omega_{rt}$ is closed.
On the other hand,
\[
\frac{d \omega_{rt}}{dr} = \frac{d}{dt} (t \cdot \omega_r + (1-t) \cdot \omega) = \omega_r - \omega.
\]
Consequently, the terms cancel, and we obtain \eqref{e: derivative vanishes}.
\end{proof}

It follows that (abusing notation a little)
\[ H_r \colon \bC^2 \longrightarrow \bR \colon (z,w) \longmapsto H_r(z) + H_r(w) \]
is a Hamiltonian whose associated flow $\Phi_r$ gives an isotopy through the family of Lagrangians $\gamma_r \times \gamma_r = \Phi_r(\gamma_0 \times \gamma_0)$.

Now the pair of Lagrangians
\[ (\gamma_r \times \gamma_r, R_\theta(\gamma_r \times \gamma_r)) = (\gamma_r \times \gamma_r, R_\theta \circ \Phi_r (\gamma_0 \times \gamma_0)) \]
is Hamiltonian isotopic (in particular, symplectomorphic) to
\[ (\gamma_0 \times \gamma_0, \Phi_r^{-1} \circ R_\theta \circ \Phi_r (\gamma_0 \times \gamma_0)) {\rm .} \]
We wish to consider the Jordan Floer homology of this pair of Lagrangians.  In particular we wish to study how the Jordan Floer homology changes when $\gamma_0 \times \gamma_0$ is the first Lagrangian, and we vary the second Lagrangian in this pair through the $1$-parameter family of Hamiltonian isotopic Lagrangians $\Phi_r^{-1} \circ R_\theta \circ \Phi_r (\gamma_0 \times \gamma_0)$.

It will be convenient, and more intuitive, for us to apply the symplectomorphism $\Phi_r$ above, so as rather to consider varying both Lagrangians at the same time so that we move instead through the family of pairs
\[ (\gamma_r \times \gamma_r, R_\theta(\gamma_r \times \gamma_r)) {\rm ,} \]
in which both the second and the first Lagrangian vary with $r$.

\subsection{The bifurcation method.}
\label{subsec:bifurcationmethod}

There are two principal approaches to studying Floer homology under Hamiltonian isotopy.
The more recent approach is that of \emph{continuation}.
It was this approach that we followed in \cite{greenelobb3} when considering well-definedness of Jordan Floer homology and variation in the angle $\theta$.  In the current paper we shall be following the \emph{bifurcation} approach, the method originally used by Floer \cite{floerlag} when defining Lagrangian Floer homology.  We take this approach firstly for technical reasons and secondly since our principal interest is at the chain level variation of the action, and the bifurcation method brings this to the fore.

We shall begin by giving an exposition of the bifurcation method, adapted to our setting in which strips are required to avoid the diagonal $\Delta_\bC$, and follow up with Proposition \ref{prop:cts_in_deformation}, whose proof largely follows from the exposition.

\subsubsection*{Exposition of the method.}
The bifurcation method is based on analogy with Cerf theory.  If one has two Morse-Smale vector fields on a closed manifold, one cannot always connect them by a path of Morse-Smale vector fields.  But one can find a path which is Morse-Smale except at finitely many times; these finitely many times corresponding to handleslides (when the Morse-Smale condition fails in the weakest way possible) or the birth-death of a pair of critical points.

Following Floer, the bifurcation method proceeds as follows, firstly under the assumption that, for all $r$, $\gamma_r \times \gamma_r$ is transverse $R_\theta(\gamma_r \times \gamma_r)$ away from $\Delta_\gamma$.  This assumption implies that each transverse intersection point
\[ p_0 \in (\gamma_0 \times \gamma_0) \cap R_\theta(\gamma_0 \times \gamma_0) \]
extends to a path of transverse intersections
\[ p_r \in (\gamma_r \times \gamma_r) \cap R_\theta(\gamma_r \times \gamma_r) {\rm .} \]

Given a choice of $1$-parameter family of $t$-dependent almost complex structures $J^r_t$, we shall consider pairs $(r,u)$ where $0 \leq r \leq 1$, and $u^r \colon \Sigma \longrightarrow \bC$ is a bounded energy strip satisfying a Lagrangian boundary condition and a Cauchy-Riemann equation which depend on $r$.

Specifically, we shall consider smooth maps
\[
u^r \colon \Sigma \longrightarrow \bC^2
\]
which satisfy the boundary conditions
\begin{equation}
	\tag{BC}
	\begin{cases}
		u^r(\bR \times \{0\}) \subset \gamma_r \times \gamma_r, \\[.2cm] u^r(\bR \times \{1\}) \subset R_\theta(\gamma_r \times \gamma_r);
	\end{cases}
\end{equation}
which satisfy the Cauchy-Riemann-Floer equation
\begin{equation}
	\label{e:CRF}
	\tag{CRF}
	\del_s u^r + J^r_t \del_t u^r= 0;
\end{equation}
and which have bounded energy
\begin{equation}
	\tag{BE}
	\int |\del_s u^r|_{J^r_t}^2 d s \, d t < \infty.
\end{equation}

It follows from (BE) that $u^r$ extends uniquely to a continuous function on the extended strip $\overline{\Sigma} = [-\infty,\infty] \times [0,1]$, where $[-\infty,\infty]$ is topologized as the end compactification of $\bR = (-\infty,\infty)$.

\begin{defin}
	[Moduli spaces of strips]
	\label{defin:moduli_space_strips}
	Suppose that $p,q$ are paths of transverse intersection points as above.
	We define the moduli space of strips
	\begin{align*}
		\cM^\Delta(p,q) = \{ (r,u^r) : 0 \leq r \leq 1, \,\, & u^r \in C^\infty(\Sigma, \bC^2) : u^r \textup{ obeys (BC), (CRF), (BE)}, \\ & u^r({\overline{\Sigma}}) \cap \Delta(\bC) = \emptyset, \\ & \lim_{s \rightarrow -\infty} u^r(s,t) = p_r, \lim_{s \rightarrow +\infty} u^r(s,t) = q_r \},
	\end{align*}
	and we write $\ol{\cM}^\Delta(p,q)$ for the quotient of this moduli space by the $\bR$-action corresponding to translation along the $s$ coordinate of $\Sigma$.
\end{defin}

Floer \cite{floerlag} showed that this parametrized version of the holomorphic strips used to define Lagrangian Floer homology allows admissable choices of parametrised almost complex structures $J^r_t$.  For such admissable choices, $\ol{\cM}^\Delta(p,q)$ is a manifold of dimension equal to the difference in Maslov indices $\mu(p) - \mu(q)$ (where we understand the Maslov index of any representative transverse intersection point along the path).

For our present purposes, we shall restrict to admissable parametrized almost complex structures $J^r_t$ that agree with the standard almost complex structure on the diagonal $\Delta_\bC$, and that agree with standard almost complex structure when near the boundaries of $\Sigma$ and within a small neighborhood of $\Delta_\bC$.\footnote{We make precise choices of these neighborhoods in \cite{greenelobb3}.  Not wishing to clutter the current exposition by repeating definitions, we allow ourselves now a degree of imprecision; exact details may be transferred \emph{mutatis mutandis} from our earlier work.}  Such perturbations suffice for transversality as explained, for example, by Audin-Damian \cite{audindamian}.  Let us assume that this restriction has been made from now on.

Proofs given in \cite{greenelobb3} show that a $1$-parameter family of strips in a space $\cM^\Delta(p,q)$ cannot degenerate by sphere bubbling, or limit to a strip meeting the diagonal in its interior, or limit to a so-called \emph{broken} strip which breaks at a point of $\Delta_\bC$.  To be more precise, these arguments were not given in the context of a family of pairs $(r,u^r)$, but they readily apply with only cosmetic modification.  In Proposition~\ref{prop:boundary_touching} we have supplied an argument (again admitting cosmetic modification due to the dependence on $r$) which rules out a limit strip meeting $\Delta_\gamma$ on its exterior, and in Proposition \ref{prop:bubba} we have ruled out disc bubbling.  Hence we find ourselves able to repeat Floer's argument from \cite{floerlag}, which we now summarize, specializing to our situation.

If $\mu(p) - \mu(q) = 1$ and $(r,u_r) \in \cM^\Delta(p,q)$, then $u_r$ is a strip of Maslov index $1$ that connects $p_r$ to $q_r$, and so contributes to the coefficient of $q_r$ in the Floer differential applied to $p_r$.  If the only boundaries of all components of the compactifications of $\cM^\Delta(p,q)$ occur at $r=0$ and $r=1$, then we see that the Floer differential applied to $p_0$ has the same coefficient of $q_0$ as the coefficient of $q_1$ in the Floer differential applied to $q_1$.

However, there may be broken strips of the form
\[  (r, v_r) \star (r, u_r) \in \cM(p, p_0) \times \cM(p_0, q)  \,\, {\rm or} \,\, (r, u_r) \star (r, w_r) \in \cM^\Delta(p, q_0) \times \cM^\Delta(q_0, q) \]
where $\mu(p_0) = \mu(p)$ and $\mu(q_0) = \mu(q)$.  Floer showed that when, for example, $(r, u_r) \in \cM^\Delta(p_0,q)$, this gives rise to a chain complex isomorphism at time $r$, essentially replacing the basis element $p$ with $p + p_0$.  The analogy with Cerf theory is a $1$-parameter family of Morse-Smale vector fields passing through a non-Morse-Smale vector field at time $r$ in which there is a single flow-line connecting two critical points of the same index.  This corresponds, in the handle decomposition picture, to a handle-slide.

So far, we have considered the case in which, for all $0 \leq r \leq 1$, $\gamma_r \times \gamma_r$ has only transverse intersections with $R_\theta(\gamma_r \times \gamma_r)$ away from their clean intersection $\Delta_\gamma$.  In the general case, Floer showed that one can, by a small $r$-dependent Hamiltonian perturbation, achieve transversality off a finite number of `degenerate' $r$ values, while at and around the degenerate $r$, the allowable intersection loci admit an explicit local description.

To provide a little more detail, the degenerate intersection $p_{r_0} = q_{r_0}$ occurring at a degenerate $r_0$ corresponds to the cancellation (or, for the more optimistic, the creation) of a pair of transverse intersection points ($p_r$ and $q_r$ for $r < r_0$, say) of Maslov indices satisfying $\mu(p_r) - \mu(q_r) = 1$.  Floer's analysis shows that for $r < r_0$ and close to $r_0$, there is a unique holomorphic strip from $p_r$ to $q_r$.
Floer's argument works under the assumption that $\pi_2(M) = \pi_1(L) = 0$, so there is an absolute action functional on generators.
This forces any strip from $p_r$ to $q_r$ to have small energy $\cA(p_r)-\cA(q_r)$.
Floer argues that any such strip has image contained in a suitably small neighborhood of the bifurcation point, and in turn is uniquely determined.
The argument applies in our particular set-up as well, since we also have an absolute action functional.
For $r > r_0$ and close to $r_0$, the Floer complex is the result of the Gaussian elimination of the generators $p$ and $q$ in the Floer complex for $r < r_0$ and close to $r_0$

\subsubsection*{Continuity of spectral invariants in deformation of the Jordan curve.}  We are now ready to draw the conclusion that we desired.

\begin{prop}
	\label{prop:cts_in_deformation}
	Suppose that $\gamma_r \subset \bC$ is a smooth family of Jordan curves for $0 \leq r \leq 1$, $i \in \{ 1 , 2 \}$, and $0 < \theta < \pi$.  Then the function
	\[ \ell_i(\gamma_{\cdot}, \theta) \colon [0,1] \longrightarrow \bR \colon r \longmapsto \ell_i(\gamma_r,\theta)  \]
	is continuous.
\end{prop}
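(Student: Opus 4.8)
The plan is to first remove the dependence of the enclosed area on $r$, and then to run the bifurcation method of the preceding exposition while keeping track of the action filtration. For the area reduction, set $A_r = \area(\gamma_r)$, which is a smooth positive function of $r$, put $\lambda_r = (A_0/A_r)^{1/2}$, and let $\gamma_r' \subset \bC$ be the image of $\gamma_r$ under the dilation $z \mapsto \lambda_r z$; then $\gamma_r'$ is again a smooth family of smooth Jordan curves, now of constant area $A_0$. This dilation, acting diagonally on $\bC^2$, rescales the standard symplectic form by $\lambda_r^2$, commutes with $R_\theta$, and preserves the diagonal $\Delta_\bC$, so it identifies the Jordan Floer chain complex of $\gamma_r$ with that of $\gamma_r'$ (after rescaling the auxiliary data) via a correspondence multiplying all actions by $\lambda_r^2$; consequently $\ell_i(\gamma_r',\theta) = \lambda_r^2\,\ell_i(\gamma_r,\theta)$, so $\ell_i(\gamma_r,\theta) = (A_r/A_0)\,\ell_i(\gamma_r',\theta)$. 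Since $r \mapsto A_r$ is continuous, it suffices to treat the case in which $\area(\gamma_r)$ is independent of $r$, which I assume henceforth.

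In that case \Cref{lem:equiareal_hamiltonian} and the discussion of \Cref{subsec:cty} present the Lagrangian pairs $(\gamma_r \times \gamma_r,\, R_\theta(\gamma_r \times \gamma_r))$, $0 \le r \le 1$, as a Hamiltonian isotopy of pairs whose Floer complexes compute the $\ell_i(\gamma_r,\theta)$. I would then apply the bifurcation method exactly as set up above to an admissible $1$-parameter family of ($t$-dependent, standard near $\Delta_\bC$) almost complex structures and $r$-dependent Hamiltonian perturbations extending the chosen endpoint data --- the compactness and degeneration issues being handled by \Cref{prop:boundary_touching}, \Cref{prop:bubba}, and the cited arguments of \cite{greenelobb3}, with only cosmetic changes for the $r$-dependence. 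This produces finitely many exceptional parameters $0 \le s_1 < \dots < s_k \le 1$ such that, on each component of $[0,1] \setminus \{s_1,\dots,s_k\}$, the transverse intersection points extend along continuous paths $p_r$, the differential of $\JFC(\gamma_r,\theta)$ and hence the distinguished homology class $\alpha_i$ are combinatorially constant, and the action $\cA_r(p_r)$ of each generator varies continuously with $r$. On such a component $\ell_i(\gamma_r,\theta)$ equals the minimum, over the (now fixed) finite collection of supports $S$ of cycles representing $\alpha_i$, of $\max_{g \in S}\cA_r(g)$; being a finite minimum of finite maxima of continuous functions of $r$, it is continuous there.

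The remaining point, and the one where I expect the real work to lie, is that $\ell_i(\gamma_r,\theta)$ has no jump at any $s_j$. If $s_j$ is a handleslide, the two complexes adjacent to $s_j$ are related by an elementary change of basis adding one generator to another of the same degree; because nonconstant Floer strips strictly decrease the (absolute) action functional, the generator being added is the one of strictly smaller action, so the change of basis is a \emph{filtered} chain isomorphism and the spectral invariants agree across $s_j$. If $s_j$ is a birth--death, a pair $p_r, q_r$ with $\mu(p_r)-\mu(q_r)=1$ coincides at $s_j$, so $\cA_r(p_r) - \cA_r(q_r) \to 0$ as $r \to s_j$, and the complex on the far side is the Gaussian elimination of this pair from that on the near side --- a filtered chain homotopy equivalence up to a filtration shift of at most $\cA_r(p_r) - \cA_r(q_r)$ --- whence the spectral invariants change by at most this vanishing quantity. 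In either case the value $\ell_i(\gamma_{s_j},\theta)$ (defined, should $\gamma_{s_j}$ fail to be generic, as the directed limit over vanishing Hamiltonian perturbations, exactly as in the proof of \Cref{prop:duality}) matches the one-sided limits, and continuity follows. The main obstacle is thus the bifurcation bookkeeping: identifying the exceptional parameters, establishing the combinatorial constancy of the complex between them, and controlling the interaction of the two elementary moves with the action filtration; granting these, the rest is the soft continuity of a min--max over a fixed combinatorial structure.
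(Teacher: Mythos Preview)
Your proposal is correct and follows essentially the same approach as the paper: rescale to constant area (the paper normalises to area $1$ rather than $A_0$, but this is immaterial), invoke \Cref{lem:equiareal_hamiltonian} to obtain a Hamiltonian isotopy, and then run the bifurcation method, using the filtered nature of the handleslide and birth--death moves to conclude continuity of the spectral invariants. Your account is in fact more explicit than the paper's on two points---the min--max description of $\ell_i$ between critical parameters and the vanishing filtration shift $\cA_r(p_r)-\cA_r(q_r)\to 0$ at a birth--death---where the paper simply asserts that the bifurcation chain homotopy equivalence is ``filtered of degree $0$''.
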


\begin{proof}
	We define the smooth family of Jordan curves
	\[ \gamma'_r := \frac{1}{\area(\gamma_r)} \gamma_r \]
	which each bound regions of area $1$.  Then we know by Lemma \ref{lem:equiareal_hamiltonian} that this family is Hamiltonian isotopic, which means, following the discussion above, that we can apply Floer's bifurcation argument to the Jordan Floer chain complexes $\JFC(\gamma'_r, \theta)$.
	
	There are finitely many critical values $0 < r_1 < \cdots < r_k < 1$ corresponding to handleslides or handle cancellations.
	At each one, the bifurcation argument establishes a chain homotopy equivalence which is filtered of degree 0 and so preserves the spectral invariants.
	For $r$ varying between a pair of consecutive critical values, it instead gives a chain complex isomorphism.
	This isomorphism carries standard generators to standard generators, and their actions vary continuously in $r$.
	Hence the spectral invariants vary continuously between a pair of consecutive critical values.
			
	So we see that
	\[ [0,1] \longrightarrow \bR : r \longmapsto \ell_i(\gamma'_r, \theta) \]
	is continuous.  But the complex $\JFC(\gamma_r, \theta)$ is isomorphic to $\JFC(\gamma'_r, \theta)$ with rescaled action, so we can conclude that
	\[ \ell_i(\gamma_{\cdot}, \theta) \colon [0,1] \longrightarrow \bR \colon r \longmapsto \ell_i(\gamma_r,\theta) = \area(\gamma'_r,\theta) \ell_i(\gamma'_r,\theta) \]
	is also continuous.
\end{proof}
\subsection{Hofer distance bounds.}
\label{subsec:hoferdistancebounds}
In this subsection we refine the continuity statement of Proposition~\ref{prop:cts_in_deformation} by considering the Hofer distance between Hamiltonian isotopic Jordan curves.

Suppose that $\gamma_0, \gamma_1 \subset \bC$ are smooth Jordan curves that bound the same area.  We have seen in Lemma \ref{lem:equiareal_hamiltonian} that they are Hamiltonian isotopic.  For $0 \leq t \leq 1$, let
\[ h_t \colon \bC \longrightarrow \bR \]
be a smooth bounded function, identically $0$ for all $t$ close to $0$ or $1$, such that the time 1 flow $\phi^1 \colon \bC \longrightarrow \bC$ of the $t$-dependent Hamiltonian vector field $X_t$ associated to $h_t$ satisfies $\phi^1(\gamma_0) = \gamma_1$.  
\begin{defin}
	\label{defin:hofernorm}
	The \emph{Hofer norm} of $h_t$ is
	\[ \vert h_t \vert := \int_0^1 ( \sup(h_t) - \inf(h_t) ) d t {\rm ,} \]
	while the \emph{Hofer distance} $\d (\gamma_0, \gamma_1)$ between $\gamma_0$ and $\gamma_1$ is the infimum of the set of all Hofer norms of such Hamiltonians.
\end{defin}

We are going to consider the path of smooth Jordan curves $\gamma_r$ for $0 \leq r \leq 1$, where $\gamma_r$ is the image of $\gamma_0$ under the time $1$ flow of (the Hamiltonian vector field associated to) the Hamiltonian $rh_t$.

We define the time-dependent Hamiltonian
\[ \ol{h}_t = -h_{1-t} \colon \bC \longrightarrow \bR {\rm ,} \]
and we define
\[ H_t \colon \bC^2 \longrightarrow \bR \colon (z,w) \longmapsto h_t(z) + h_t(w) {\rm ,}\]
and
\[ \ol{H}_t \colon \bC^2 \longrightarrow \bR \colon (z,w) \longmapsto \ol{h}_t(z) + \ol{h}_t(w) {\rm .}\]
We write $\Phi^{r} \colon \bC^2 \longrightarrow \bC^2$ for the time $1$ flow of $rH_t$ so that we have $\Phi^r(\gamma_0 \times \gamma_0) = \gamma_r \times \gamma_r$, and $(\Phi^r)^{-1}$ is the time $1$ flow of $\ol{H}_t$.

We further let
\[ G^\theta_t \colon \bC^2 \longrightarrow \bR \]
be a time-dependent Hamiltonian, identically $0$ for $t$ near $0$ and $1$, that has the form of an area $1$ bump function in $t$ multiplied by the Hamiltonian $\frac\theta4 \vert z - w \vert^2$ for $(z,w) \in \bC^2$.  The inverse of the time $1$ flow of $G^\theta_t$ is rotation $R_\theta\colon \bC^2 \longrightarrow \bC^2$ by angle $\theta$ around the diagonal $\Delta_\bC$.

For $0 \leq r \leq 1$, we are going to consider the concatenation of the Hamiltonians $r\ol{H}_t$, $G_t^\theta$, and $rH_t$, which shall give a time-dependent Hamiltonian \[ F^{r,\theta}_t \colon \bC^2 \longrightarrow \bR\]
for times $0 \leq t \leq 3$.  Writing $\Psi^{r,\theta} \colon \bC^2 \longrightarrow \bC^2$ for the time $3$ flow of $F^{r,\theta}_t$, the pair of Lagrangians
\[(\gamma_0 \times \gamma_0, (\Psi^{r,\theta})^{-1}(\gamma_0 \times \gamma_0)) = (\gamma_0 \times \gamma_0, (\Phi^r)^{-1} \circ R_\theta \circ \Phi^r (\gamma_0 \times \gamma_0))\]
is symplectomorphic to
\[ (\Phi^r(\gamma_0 \times \gamma_0),R_\theta \circ \Phi^r (\gamma_0 \times \gamma_0) )  =  (\gamma_r \times \gamma_r, R_\theta (\gamma_r \times \gamma_r)) {\rm .} \]

It is this last path of pairs of Lagrangians in which we are mainly interested.  In particular we would like to bound the filtered degree of the chain homotopy equivalence
\[ \JFC(\gamma_0, \theta) \longrightarrow \JFC(\gamma_1, \theta) \]
induced by Floer's bifurcation argument.  This bound will be formulated in terms of the Hofer norm $\vert h_t \vert$, and thus in terms of the Hofer distance $d(\gamma_0, \gamma_1)$.

We first give the argument in the case in which there is no Hamiltonian perturbation of the path of Lagrangian pairs.

\begin{lem}
	\label{lem:hoferdistancebound1}
	Suppose that, following the discussion above, for $0 \leq r \leq 1$,
	\[ p_r \in (\gamma_r \times \gamma_r) \cap R_\theta(\gamma_r \times \gamma_r) \]
	gives a path of transverse intersection points.  Then the actions satisfy
	\[ \vert \cA(p_1) - \cA(p_0) \vert \leq 4 \vert h_t \vert {\rm .} \]
\end{lem}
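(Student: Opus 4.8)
The plan is to realise the path $r\mapsto p_r$ as a path of critical points of an action functional depending smoothly on $r$, to differentiate $r\mapsto\cA(p_r)$, and to bound its derivative pointwise by the oscillation of $h_t$. We are here in the case where $\gamma_r\times\gamma_r$ is transverse to $R_\theta(\gamma_r\times\gamma_r)$ away from $\Delta_\gamma$ for \emph{all} $r\in[0,1]$, so $r\mapsto p_r$ is smooth on all of $[0,1]$ with no degenerate parameter values to interrupt the differentiation.

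First I would fix the correspondence. Conjugation by $\Phi^r$ identifies the pair $(\gamma_r\times\gamma_r,\,R_\theta(\gamma_r\times\gamma_r))$ with $(\gamma_0\times\gamma_0,\,(\Psi^{r,\theta})^{-1}(\gamma_0\times\gamma_0))$, carrying $p_r$ to a Hamiltonian chord $x_r\colon[0,3]\to\bC^2$ of $F^{r,\theta}_t$ with $x_r(0)$ on the fixed Lagrangian $\gamma_0\times\gamma_0$ and $x_r(3)$ on the varying one. Each of $\Phi^r$, $R_\theta$, $\Psi^{r,\theta}$ is built from product flows and rotations about $\Delta_\bC$, hence preserves $\Delta_\bC$; since $p_r$ lies off $\Delta_\bC$, so does the entire trajectory $x_r$, which therefore admits a capping disc $\bar x_r$ disjoint from $\Delta_\bC$ of the kind used to define $\JFC(\gamma_r,\theta)$ in \cite{greenelobb3}, and $(x_r,\bar x_r)$ can be taken to vary smoothly in $r$ by the implicit function theorem. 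The action spectrum is unchanged under conjugation by a symplectomorphism and under reparametrisation of the generating Hamiltonian path, so the filtration level $\cA(p_r)$ of $p_r$ in $\JFC(\gamma_r,\theta)$ equals $\cA_{F^{r,\theta}}(x_r)=\int_0^3 F^{r,\theta}_t(x_r(t))\,dt-\int_{\bar x_r}\omega$ (following the sign convention of \cite{greenelobb3} and the preimage-of-flow viewpoint of \Cref{prop:duality}).

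Next I would differentiate. The Jordan Floer complex carries an absolute action functional -- this is exactly the role of the $\Delta_\bC$-avoidance constraint on capping discs, as recalled in the bifurcation discussion above -- and $x_r$ is a critical point of $\cA_{F^{r,\theta}}$, so the motion of $x_r$ makes no contribution and the standard first-variation identity gives
\[ \frac{d}{dr}\cA(p_r)=\int_0^3 \frac{\partial}{\partial r}F^{r,\theta}_t\big(x_r(t)\big)\,dt . \]
Since $F^{r,\theta}_t$ equals $r\ol H_t$ on $[0,1]$, $G^\theta_{t-1}$ on $[1,2]$ and $rH_{t-2}$ on $[2,3]$, with only the outer blocks depending on $r$,
\[ \frac{d}{dr}\cA(p_r)=\int_0^1 \ol H_t\big(x_r(t)\big)\,dt+\int_0^1 H_t\big(x_r(t+2)\big)\,dt . \]
Adding a time-dependent constant to $h_t$ alters neither $\gamma_r$ (Hamiltonian flows see only $dh_t$) nor this right-hand side (the $\ol H$- and $H$-contributions cancel), so I may subtract $\inf_{\bC}h_t$ from $h_t$ for each $t$, which leaves $\vert h_t\vert$ unchanged; then on $\bC^2$ one has $0\le H_t(z,w)=h_t(z)+h_t(w)\le 2(\sup h_t-\inf h_t)$ and $-2(\sup h_{1-t}-\inf h_{1-t})\le \ol H_t(z,w)=-h_{1-t}(z)-h_{1-t}(w)\le 0$. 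Each of the two integrals is then at most $2\int_0^1(\sup h_t-\inf h_t)\,dt=2\vert h_t\vert$ in absolute value (substituting $t\mapsto 1-t$ in the first), so $\vert\tfrac{d}{dr}\cA(p_r)\vert\le 4\vert h_t\vert$ for every $r$, and integrating over $r\in[0,1]$ yields $\vert\cA(p_1)-\cA(p_0)\vert\le 4\vert h_t\vert$.

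The step I expect to be the main obstacle is not the estimate but the justification that $r\mapsto\cA(p_r)$ is a differentiable function with the naive derivative: one needs the absolute action functional supplied by the $\Delta_\bC$-avoidance convention, smooth dependence on $r$ of the chord and of a $\Delta_\bC$-avoiding capping, and the identification of the $\JFC(\gamma_r,\theta)$-filtration level of $p_r$ with $\cA_{F^{r,\theta}}(x_r)$ through the chain of conjugations. Once that bookkeeping is in place, the first-variation identity and the oscillation bounds above finish the proof.
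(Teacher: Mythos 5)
Your proposal is correct and follows essentially the same route as the paper: the paper's computation of $\tau^*(\omega)$ over the cylinder swept out by the trajectories, followed by the Fundamental Theorem of Calculus, is precisely the derivation of your first-variation identity $\frac{d}{dr}\cA(p_r)=\int_0^3 \partial_r F^{r,\theta}_t(\tau_r(t))\,dt$, and the final oscillation bound is the same. Your explicit normalization of $h_t$ by subtracting $\inf_\bC h_t$ (justified by the cancellation of the $H$- and $\ol{H}$-contributions) is a detail the paper leaves implicit when it passes from a pointwise bound on $\partial_r F^{r,\theta}_t$ to a bound by the oscillation, so it is a welcome addition rather than a deviation.
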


The Hofer norm is tailored to provide action bounds of the kind appearing in \Cref{lem:hoferdistancebound1}.
For instance, it is also behind the proof of the continuity of the spectral invariants in appearing in \Cref{thm:basic_properties} (compare \cite[Theorem 3 and Section 3.2]{leczap}).
The following proof follows a standard line of argument.

\begin{proof}
	Suppose that for $0 \leq r \leq 1$, $p_r$ corresponds the trajectory
	\[ \tau_r \colon [0,3] \longrightarrow \bC^2 \]
	of the Hamiltonian vector field associated to $F^{r,\theta}_t$.
	
	We have
	\[
		\cA(p_1) - \cA(p_0) = \int_0^3 F^{1,\theta}_t \circ \tau_1(t)dt - \int_{D_1} \omega  - \int_0^3 F^{0,\theta}_t \circ \tau_0(t) d t  + \int_{D_0} \omega
	\]
	where $D_0$ and $D_1$ are {\em preferred} cappings of $\tau_0$ and $\tau_1$, i.e. cappings disjoint from $\Delta_\bC$ (see \cite[Lemma 2.5]{greenelobb3}).
	Since $\int_{D_1} \omega$ depends only on the homotopy class of $D_1$, we take $D_1$ to differ from $D_0$ by the image of
	\[ \tau \colon [0,3] \times [0,1] \longrightarrow \bC^2 \colon (t,r) \longmapsto \tau_r(t) {\rm .} \]
	Thus,
	\[
		\cA(p_1) - \cA(p_0) = \int_0^3 F^{1,\theta}_t \circ \tau_1(t) - F^{0,\theta}_t \circ \tau_0(t) d t - \int_{[0,3] \times [0,3]} \tau^*(\omega).
	\]
	
	The last integrand is 
	
		\begin{align*}
		\tau^*(\omega)\left(\frac{\partial}{\partial r},\frac{\partial}{\partial t}\right) &= \omega \left( \frac{\partial \tau_r(t)}{\partial t} , \frac{\partial \tau_r(t)}{\partial r} \right) & \text{definition of pullback} \\
		&= \omega \left( X_{F^{r,\theta}_t} ,  \frac{\partial \tau_r(t)}{\partial r} \right) & \text{definition of Hamiltonian trajectory} \\
		&=  d(F^{r,\theta}_t)\left(\frac{\partial \tau_r(t)}{\partial r}\right) & \text{definition of Hamiltonian vector field} \\
		&=  \left. \frac{\partial (F^{s,\theta}_t) \circ \tau_r(t)}{\partial r}\right\vert_{s=r} & \text{definition of differential} \\
		&=  \left.\frac{\partial (F^{r,\theta}_t \circ \tau_r(t))}{\partial r} - \frac{\partial F^{r,\theta}_t \circ \tau_s(t)}{\partial r}\right\vert_{s=r}  & \text{chain rule.}
		\end{align*}
	Thus,
		 \[ \int_{[0,3] \times [0,3]} \tau^*(\omega)=  \int_0^3 \left( F^{1,\theta}_t \circ \tau_1(t) - F^{0,\theta}_t \circ \tau_0(t)\right) dt - \int_0^3 \int_0^1 \left. \frac{\partial F^{r,\theta}_t \circ \tau_s(t)}{\partial r}\right\vert_{s=r} d r d t \]
	by the Fundamental Theorem of calculus, whence

	\begin{align*}
		\vert \cA(p_1) - \cA(p_0) \vert &= \left\vert \int_0^3 \int_0^1 \left( \left. \frac{\partial F^{r,\theta}_t \circ \tau_s(t)}{\partial r}\right\vert_{s=r} \right) drdt \right\vert = \left\vert \int_{[0,1] \cup [2,3]}  \int_0^1 \left( \left. \frac{\partial F^{r,\theta}_t \circ \tau_s(t)}{\partial r} \right\vert_{s=r} \right) d r d t \right\vert \\
		&\leq  \int_{[0,1] \cup [2,3]} \sup_{0 \leq r \leq 1} \left( \sup F^{r, \theta}_t - \inf F^{r, \theta}_t \right) d t
		\\ &= 2\int_0^1 \left( \sup(H_t) - \inf(H_t) \right) d t
		\leq 4 \vert h_t \vert {\rm ,}
	\end{align*}
	where the second equality follows since $F^{r,\theta}_t$ is constant at a fixed point of $\bC^2$ whenever $1 \leq t \leq 2$.
\end{proof}

With this in hand, we can establish the following result.

\begin{prop}
	\label{prop:hoferdistancebound2}
	Suppose that $\gamma_0$ and $\gamma_1$ are smooth Jordan curves enclosing the same area and $0 < \theta < \pi$.  Then there exists a filtered chain homotopy equivalence
	\[ \JFC(\gamma_0, \theta) \longrightarrow \JFC(\gamma_1, \theta) \]
	which is filtered of degree $4 \d(\gamma_0,\gamma_1)$, and consequently an isomorphism
	\[ \JF(\gamma_0, \theta) \longrightarrow \JF(\gamma_1, \theta) \]
	which is filtered of degree $4\d(\gamma_0,\gamma_1)$.
\end{prop}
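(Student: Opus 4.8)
The plan is to combine the bifurcation machinery developed above with the action bound of \Cref{lem:hoferdistancebound1}, handling the non-generic case by the same perturbation trick appearing in the exposition of the method. First I would fix a Hamiltonian $h_t$, vanishing near $t=0,1$, whose time-$1$ flow carries $\gamma_0$ to $\gamma_1$ and whose Hofer norm $\vert h_t\vert$ is within $\epsilon$ of $\d(\gamma_0,\gamma_1)$; this is possible since $\gamma_0,\gamma_1$ enclose equal area, by \Cref{lem:equiareal_hamiltonian} and the definition of Hofer distance. Set $\gamma_r$ to be the image of $\gamma_0$ under the time-$1$ flow of $rh_t$, and form the concatenated Hamiltonians $F^{r,\theta}_t$ exactly as in the discussion preceding \Cref{lem:hoferdistancebound1}, so that the path of Lagrangian pairs $(\gamma_r\times\gamma_r, R_\theta(\gamma_r\times\gamma_r))$ is the geometric incarnation of varying the second factor of the pair attached to $\gamma_0$.

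Next I would run Floer's bifurcation argument along this path, as summarized in Section~\ref{subsec:bifurcationmethod}, invoking Propositions~\ref{prop:boundary_touching} and~\ref{prop:bubba} to rule out boundary touching and diagonal bubbling, so that the only degenerations of $1$-parameter families of index-$1$ strips are the familiar handleslides and birth--death cancellations, occurring at finitely many parameter values $0<r_1<\cdots<r_k<1$. This produces a chain homotopy equivalence $\JFC(\gamma_0,\theta)\to\JFC(\gamma_1,\theta)$ built as a composite of: chain isomorphisms on the intervals between consecutive $r_j$, each of which sends the standard generators (transverse intersection points) to standard generators along the continuously-varying paths $p_r$; handleslide isomorphisms of the form $p\mapsto p+p_0$; and Gaussian-elimination maps at the birth--death points. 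The key point is to estimate the filtered degree of this composite: for the isomorphisms on the regular intervals, \Cref{lem:hoferdistancebound1} applied to each path $p_r$ gives $\vert\cA(p_1)-\cA(p_0)\vert\le 4\vert h_t\vert$, and since the handleslide and Gaussian-elimination maps at the critical values are filtered of degree $\le 0$ (a handleslide strip of Maslov index $0$ has nonnegative action difference, and Gaussian elimination only decreases action), the total map is filtered of degree at most $4\vert h_t\vert$. Passing to homology and letting $\epsilon\to 0$ gives the filtered degree bound $4\d(\gamma_0,\gamma_1)$ for the induced isomorphism $\JF(\gamma_0,\theta)\to\JF(\gamma_1,\theta)$.

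For the non-generic case — when $\gamma_r\times\gamma_r$ fails to be transverse to $R_\theta(\gamma_r\times\gamma_r)$ away from $\Delta_\gamma$ for some $r$ — I would, following Floer and the exposition above, apply a further small $r$-dependent Hamiltonian perturbation supported away from $\Delta_\bC$ to achieve transversality off finitely many degenerate parameters, noting that this perturbation can be taken with arbitrarily small Hofer norm and hence contributes negligibly to the degree bound; the local birth--death model then applies at the degenerate $r$. The main obstacle I anticipate is the bookkeeping needed to see that the handleslide and cancellation maps really are filtered of degree $\le 0$ and that the composite estimate is genuinely additive in the right way — i.e.\ that no spurious action jump accumulates across a critical value — so that the single Hofer-norm bound $4\vert h_t\vert$ controls the entire composite rather than a sum over the $k$ intervals. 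This is handled by the observation in the exposition that the bifurcation strips at a degenerate $r_0$ have energy exactly $\cA(p_r)-\cA(q_r)\to 0$ as $r\to r_0$, so the action of the standard generators varies continuously (not merely piecewise-continuously with jumps) across all of $[0,1]$, and the total variation is thus bounded by \Cref{lem:hoferdistancebound1} applied once along the whole family.
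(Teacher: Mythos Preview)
Your proposal is correct and follows essentially the same route as the paper's proof: choose $h_t$ with Hofer norm close to $\d(\gamma_0,\gamma_1)$, run the bifurcation argument along the path $\gamma_r$, and combine the action estimate of \Cref{lem:hoferdistancebound1} with the fact that the handleslide and birth--death maps are filtered of degree~$0$. The paper is only slightly more explicit on the point you flag as the main obstacle: rather than appealing to continuity of the action across critical values, it reruns the integral computation underlying \Cref{lem:hoferdistancebound1} on each subinterval $(r_i,r_{i+1})$ of the perturbed family to obtain the bound $4(r_{i+1}-r_i)\vert h_t\vert$ plus a perturbation term, so that summing over the finitely many subintervals gives $4\vert h_t\vert$ plus something that can be made arbitrarily small --- this is exactly the ``additive in the right way'' bookkeeping you anticipated needing.
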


The following corollary is immediate.

\begin{cor}
	\label{cor:hoferdistancebound3}
	For $i = 1,2$ and $0 < \theta < \pi$, if $\gamma_0$ and $\gamma_1$ are smooth Jordan curves enclosing the same area, then
	\[ \vert \ell_i(\gamma_0, \theta) - \ell_i(\gamma_1,\theta) \vert \leq 4d(\gamma_0, \gamma_1) {\rm .} \] \qed
\end{cor}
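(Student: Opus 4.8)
The plan is to read the estimate off directly from \Cref{prop:hoferdistancebound2}. Recall that $\ell_i(\gamma,\theta)$ is, by construction, the filtration grading of the distinguished degree-$i$ generator $\alpha_i$ of $\JF(\gamma,\theta)$; that is,
\[ \ell_i(\gamma,\theta) = \min\{\lambda \in \bR : \alpha_i \in \im\big(H_i(\cF_\lambda\JFC(\gamma,\theta)) \to H_i(\JFC(\gamma,\theta))\big)\}, \]
where $\cF_\lambda$ denotes the subcomplex spanned by the generators of action at most $\lambda$ (the minimum is attained since $\JFC(\gamma,\theta)$ is finitely generated).

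First I would invoke the general principle that a filtered chain homotopy equivalence of degree $c$ induces on homology an isomorphism $\Phi$ with $\Phi(\cF_\lambda) \subseteq \cF_{\lambda+c}$ for all $\lambda$, so that $\ell(\Phi(a)) \le \ell(a)+c$ for every homology class $a$; applying the same reasoning to the homotopy inverse $\Psi$, which \Cref{prop:hoferdistancebound2} supplies with the same degree bound, gives $\ell(a) = \ell(\Psi\Phi(a)) \le \ell(\Phi(a))+c$, hence $|\ell(\Phi(a)) - \ell(a)| \le c$. Second, $\Phi$ respects the homological grading, and $\JF(\gamma_1,\theta)$ is one-dimensional over $\bF_2$ in degree $i$, so $\Phi$ necessarily carries the generator $\alpha_i$ of $\JF(\gamma_0,\theta)$ to the generator $\alpha_i$ of $\JF(\gamma_1,\theta)$. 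Taking $c = 4\d(\gamma_0,\gamma_1)$ from \Cref{prop:hoferdistancebound2} and combining these two observations yields
\[ |\ell_i(\gamma_0,\theta) - \ell_i(\gamma_1,\theta)| \le 4\d(\gamma_0,\gamma_1). \]

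There is no genuine obstacle here: all of the analytic content has been absorbed into \Cref{prop:hoferdistancebound2}. The only point that merits a sentence of care is that a degree-$c$ filtered equivalence, together with a degree-$c$ filtered inverse, shifts filtration gradings by \emph{at most} $c$ in both directions — it is this two-sided bound that produces the absolute value in the statement. (Should \Cref{prop:hoferdistancebound2} be available only with degree $4\vert h_t\vert$ for each admissible Hamiltonian $h_t$, rather than with degree $4\d(\gamma_0,\gamma_1)$ on the nose, one chooses admissible $h_t$ with $\vert h_t\vert \to \d(\gamma_0,\gamma_1)$ and passes to the limit.)
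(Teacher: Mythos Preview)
Your argument is correct and is precisely the unpacking of why the paper declares this corollary ``immediate'' from \Cref{prop:hoferdistancebound2}: the filtered degree-$c$ isomorphism on homology must send $\alpha_i$ to $\alpha_i$ (by one-dimensionality in each degree), and the two-sided filtration bound then gives the absolute-value estimate. The paper offers no further proof, so there is nothing to compare.
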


\begin{proof}[Proof of Proposition \ref{prop:hoferdistancebound2}]
	If $h_t \colon \bC \longrightarrow \bR$ is a time-dependent Hamiltonian whose time $1$ flow takes $\gamma_0$ to $\gamma_1$, we can attempt to use Lemma \ref{lem:hoferdistancebound1} to establish the result by using the intermediate Hamiltonians $rh_t$ for $0 \leq r \leq 1$ as before.  Indeed, assuming that away from $\Delta_\bC$ we have that $\gamma_r \times \gamma_r$ is transverse to $R_\theta(\gamma_r \times \gamma_r)$, the argument above shows that Floer's chain complex isomorphism is filtered of degree $4 \vert h_t \vert$.  If, on the other hand, there are some non-transverse intersections away from $\Delta_\bC$, Floer tells us that we can find a path of arbitrarily small Hamiltonian perturbations so that the bifurcation argument applies, as follows.
	
	Let us replace the path of time-dependent Hamiltonians 
	\[ F^{r, \theta}_t \colon \bC^2 \longrightarrow \bR \]
	for $0 \leq r \leq 1$, $0 \leq t \leq 3$, by the perturbed path 
	\[ \ol{F^{r, \theta}_t} \colon \bC^2 \longrightarrow \bR \]
	for $0 \leq r \leq 1$, $0 \leq t \leq 4$ which we obtain from $F^{r, \theta}_t$ by concatenating with the path of small perturbations
	\[ K^r_t \colon \bC^2 \longrightarrow \bR \]
	for $0 \leq r \leq 1$, $0 \leq t \leq 1$ (which we assume is identically $0$ near $t=0$ and $t=1$).  Then there exist $0 < r_1 < r_2 < \cdots < r_n < 1$ so that, away from these values of $r$, $\gamma_0 \times \gamma_0$ is transverse away from $\Delta_\bC$ to the image of $\gamma_0 \times \gamma_0$ under the time $4$ flow corresponding to $\ol{F^{r, \theta}_t}$.
	
	For $r_i < r < r_{i+1}$ suppose, then,  that $\ol{\tau}_r(t)$ is a path of trajectories of the vector field $X_{\ol{F^{r, \theta}_t}}$.  Writing $\cA(\ol{\tau}_{r_i})$ and $\cA(\ol{\tau}_{r+1})$ for the limiting actions, the arguments of Lemma \ref{lem:hoferdistancebound1} apply to show that
	\begin{align*}
		\vert \cA(\ol{\tau}_{r_i}) - \cA(\ol{\tau}_{r+1}) \vert & \leq 4(r_{i+1} - r_i)\vert h_t \vert + \left\vert \int_0^1 \int_0^1 \left. \frac{\partial K^r_t \circ \ol{\tau}_s^t}{\partial r}\right\vert_{s=r} drdt \right\vert \\
		&\leq  4(r_{i+1} - r_i)\vert h_t \vert +
		\int_0^1  \sup_{0 \leq r \leq 1}\left(\sup K^r_t - \inf K^r_t\right)  dt
	\end{align*}
	and this final term may be chosen to be arbitrarily small by choice of perturbation $K^r_t$.
	
	So we see that Floer's argument gives bifurcation isomorphisms
	\[ \JF(\gamma_0, \theta) \longrightarrow \JF(\gamma_1, \theta) \]
	of filtered degree arbitrarily close to $\vert h_t \vert$.  The result now follows by choosing a sequence $h^n_t$ whose Hofer norms limit to the Hofer distance $\d(\gamma_0, \gamma_1)$.
\end{proof}

\subsection{Piecewise linear Jordan curves}
\label{subsec:pwlinear}
For technical reasons, we shall find it convenient to extend the definition of the spectral invariants associated to a smooth Jordan curve to piecewise linear Jordan curves.  We further wish to verify that the spectral invariants vary continuously in a (linearly varying) family of piecewise linear Jordan curves.  Let us begin by defining our terms.

\begin{defin}
	\label{defin:pwlinearjordan}
	A \emph{piecewise linear Jordan curve} (or \emph{PL curve}) $Z \subset \bC$ consists of a finite set of cyclically ordered distinct points
	\[ Z = \{ z_1, z_2, \ldots, z_N \} \subset \bC \]
	such that the interior of each interval $\ol{z_i z_{i+1}}$ is disjoint from every other interval.  We shall often abuse both nomenclature and notation by referring to the union of the intervals itself as a PL curve, and also writing $Z$ to refer to the union of the intervals.
\end{defin}

We permit the possibility that $z_i$ lies on the interval $\ol{z_{i-1} z_{i+1}}$.

\begin{defin}
	\label{defin:pwlinearisotopy}
	Suppose that for $t \in I \subset \bR$ where $I$ is an interval, the sets
	\[ Z(t) = \{ z_1(t), z_2(t), \ldots, z_N(t) \} \subset \bC \]
	are PL curves.  We say that $Z(t)$ is an \emph{isotopy} if each path $z_i(t)$ is piecewise smooth.  We say that $Z(t)$ is a \emph{piecewise linear isotopy} (or \emph{PL isotopy}) if each $z_i(t)$ is a piecewise linear path in which the derivative $\frac{d z_i (t)}{dt}$ is piecewise constant.
\end{defin}

In order to define a spectral invariant for a PL curve, our procedure will be to approximate it by smooth curves and take a limit of their spectral invariants.  We shall be somewhat careful about the kind of convergence considered in order to ensure that the resulting limit is independent of choices.

\begin{defin}
	\label{defin:admissiblePLapprox1}
	Suppose that $z,w \in \bC$ are distinct points and $\epsilon > 0$.  We write $B_\epsilon(z,w) \subset \bC$ for the solid closed rectangle of width $2 \epsilon$ and length $2\epsilon + \vert z - w \vert$, which is centred at $(z + w)/2$ and whose long edges are parallel to the interval $\ol{zw}$.  We shall think of $B_\epsilon(z,w)$ as foliated by the intervals of length $2\epsilon$ that are parallel to the short edges of the rectangle.
\end{defin}

\begin{defin}
	\label{defin:admissiblePLapprox1}
	Suppose that $Z = \{ z_1, \ldots, z_N \} \subset \bC$ is a PL curve.  We say that the smooth Jordan curve $\gamma$ is $(Z,\epsilon)$-\emph{admissable} if there exists some $\epsilon > 0$ such that
	\[ \gamma \subset \bigcup_{i=1}^N B^\circ_\epsilon(z_i, z_{i+1}) \]
	(where the superscript $\circ$ indicates the interior),
	$\gamma$ is transverse to the foliations of each $B_\epsilon(z_i,z_{i+1})$, and
	\[ B_\epsilon(z_i, z_{i+1}) \cap B_\epsilon(z_j, z_{j+1}) = \emptyset \]
	whenever $\{i, i+1\} \cap \{j, j+1\} = \emptyset$, indices (mod $n$).
	We further say that $\gamma$ is $Z$-admissible if $\gamma$ is $(Z,\epsilon)$-admissable for some $\epsilon > 0$.
\end{defin}

We now make a speculative definition that we shall have to check is independent of choices.

\begin{defin}
	\label{defin:spectralforPLcurve}
	If $Z$ is a PL curve and $\gamma_n \subset \bC$ is a $Z$-admissable sequence of smooth curves, each enclosing the same area as $Z$, that converge to $Z$ in the Hausdorff metric $\gamma_n \rightarrow Z$, then we define
	\[ \ell_i(Z, \theta) = \lim_{n \rightarrow \infty} \ell_i(\gamma_n, \theta) \]
	for $i=1,2$ and $0 < \theta < \pi$.
\end{defin}

To check that $\ell_i(Z, \theta)$ is well-defined, we establish the following lemma.

\begin{lem}
	\label{lem:spectralforPLcurve}
	Suppose that $Z$ is a PL curve and $\gamma_n$ and $\gamma'_n$ are two sequences of $Z$-admissable Jordan curves converging to $Z$.  Then 
	\[ \ell_i(\gamma_m, \theta) - \ell_i(\gamma'_n, \theta) \rightarrow 0 \]
	as $m,n \rightarrow \infty$.
\end{lem}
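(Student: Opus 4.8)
The plan is to reduce the claim to an application of the Hofer distance bound in \Cref{cor:hoferdistancebound3}. The key point is that two $Z$-admissable Jordan curves that are both Hausdorff-close to $Z$ must be close to each other in a \emph{Hofer-controlled} way — not merely $C^0$-close, since $C^0$-proximity alone does not bound Hofer distance. The mechanism that gives Hofer control is the foliation structure built into $Z$-admissability: both $\gamma_m$ and $\gamma'_n$ lie inside the thin rectangular neighborhood $\bigcup_i B_\epsilon(z_i,z_{i+1})$, each is transverse to the foliation by short intervals, and they enclose the same area as $Z$ (hence the same area as each other). First I would fix $\epsilon > 0$ small enough that both sequences are eventually $(Z,\epsilon)$-admissable (possible by shrinking $\epsilon$ and discarding finitely many terms), and reduce to comparing a single pair $\gamma = \gamma_m$, $\gamma' = \gamma'_n$ with $m,n$ large. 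By \Cref{cor:hoferdistancebound3} it suffices to show $\d(\gamma,\gamma') \to 0$ as $m,n \to \infty$.

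Next I would construct an explicit equiareal isotopy from $\gamma$ to $\gamma'$ supported in $\bigcup_i B_\epsilon(z_i,z_{i+1})$ and estimate its Hofer norm. Because both curves are transverse to the foliation of each $B_\epsilon(z_i,z_{i+1})$ by short intervals (of length $2\epsilon$), along each leaf $\gamma$ and $\gamma'$ each meet the leaf in a single point (after discarding finitely many terms, using Hausdorff convergence to $Z$, which meets each such leaf once), so there is a natural leafwise straight-line isotopy $\gamma_s$ from $\gamma$ to $\gamma'$ staying inside the union of rectangles. The displacement of any point under this isotopy is at most the width $2\epsilon$, and the isotopy is supported in a region of area $O(\epsilon \cdot \mathrm{length}(Z))$. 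Feeding this into \Cref{lem:equiareal_hamiltonian} (after rescaling to equal area — which is automatic here) and then into the Hofer-norm estimate, one gets $\d(\gamma,\gamma') = O(\epsilon^2 \cdot \mathrm{length}(Z))$, or at worst $O(\epsilon \cdot \mathrm{length}(Z))$, a bound depending only on $\epsilon$ and $Z$, not on $m$ or $n$. Since $\epsilon$ may be taken arbitrarily small, this forces $\ell_i(\gamma_m,\theta) - \ell_i(\gamma'_n,\theta) \to 0$.

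One technical wrinkle I would address carefully: the naive leafwise isotopy need not be equiareal, and it need not even be a diffeomorphism if $\gamma$ and $\gamma'$ meet a leaf in the same point or cross awkwardly near the short ends of the rectangles where neighboring $B_\epsilon$'s overlap. The cleaner route is to build a diffeomorphism $f$ of $\bC$, supported in a slightly enlarged union of the rectangles, with $f(\gamma) = \gamma'$ and with $\sup\|f - \mathrm{id}\| = O(\epsilon)$ (using transversality to the foliation to interpolate leafwise, and a partition of unity to handle the overlap regions), then invoke Moser's argument exactly as in \Cref{lem:equiareal_hamiltonian} to correct $f$ to a Hamiltonian isotopy; since the area defect $f^*\omega - \omega$ is supported in a region of area $O(\epsilon\cdot\mathrm{length}(Z))$ and is bounded pointwise, the corrected Hamiltonian still has Hofer norm $O(\epsilon \cdot \mathrm{length}(Z))$.

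The main obstacle is precisely this step: producing the comparison diffeomorphism with both the displacement bound \emph{and} the support/area bound simultaneously, and then verifying that the Moser correction does not inflate the Hofer norm beyond $O(\epsilon)$. The displacement bound is easy (everything lives in a width-$2\epsilon$ strip), the support bound is easy (everything lives in $\bigcup_i B_\epsilon$ of total area $O(\epsilon \cdot \mathrm{length}(Z))$), but combining them with equiareality through Moser's method — keeping track that the primitive $\beta_r$ of $\omega - \omega_r$ and hence the generating Hamiltonian stays $O(\epsilon)$ in sup-norm on a region of measure $O(\epsilon)$ — requires a little care with the estimates, though nothing beyond standard Moser bookkeeping.
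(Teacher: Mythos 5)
Your reduction is exactly the paper's: fix $\epsilon$ so that both tails are $(Z,\epsilon)$-admissable and invoke \Cref{cor:hoferdistancebound3}, so that everything comes down to showing $\d(\gamma_m,\gamma'_n)\to 0$. Where you diverge is in how that Hofer distance is bounded. The paper does not interpolate leafwise in the boxes $B_\epsilon$ at all: it perturbs $\gamma'_n$ by a small Hamiltonian to make it transverse to and graphical over $\gamma_m$, applies a symplectomorphism (Karlsson) carrying $\gamma_m$ to the zero-section of an annular Weinstein neighborhood and the perturbed curve to a section $s=df$, and then reads off the Hofer norm of the generating Hamiltonian $f\circ\pi$ from the identity $f(\theta)-f(\theta_0)=\int_{\theta_0}^{\theta}df$: the oscillation of $f$ is at most the positive area cobounded by $s$ and the zero-section, i.e.\ half the unsigned area between the two curves, which tends to $0$. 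That one-line computation is what replaces your entire Moser step, and it yields the sharper bound (area between the curves, rather than area of the $\epsilon$-neighborhood).

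The step you flag as the main obstacle is a genuine gap, and the justification you offer for it is not valid as stated. A $2$-form $f^*\omega-\omega$ of bounded density supported on a set of small area need not admit a primitive of small sup-norm; and even a sup-norm bound on $\beta_r$ does not directly bound $\sup H_t-\inf H_t$, because the oscillation of the Hamiltonian is obtained by integrating $\iota(X_t)\omega$ along paths, and path lengths inside the support --- a thin annulus of length comparable to $\mathrm{length}(Z)$, not of diameter $O(\epsilon)$ --- enter the estimate. The estimate is rescuable precisely because the annulus has width $O(\epsilon)$, so one can build the primitive by integrating transversally across the leaves; but that choice must be made explicitly (the harmonic primitive used in \Cref{lem:equiareal_hamiltonian} carries no such bound), and your proposal does not make it. A second, smaller issue: admissability plus Hausdorff convergence does not force each curve to meet each leaf exactly once --- transversality only makes each arc of $\gamma\cap B_\epsilon(z_i,z_{i+1})$ monotone in the leaf parameter, and an admissable curve could a priori run back and forth through a box --- so your leafwise straight-line map is not obviously well-defined. (The paper's graphicality assertion elides a cousin of this point, but its Weinstein-neighborhood formulation at least isolates it as a single claim rather than building a global interpolation on top of it.)
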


\noindent By taking $\gamma_n = \gamma'_n$ Lemma \ref{lem:spectralforPLcurve} establishes that the limit of Definition \ref{defin:spectralforPLcurve} exists, whereas taking $\gamma_n$, $\gamma'_n$ to be possibly different verifies that the limit is independent of the choice of converging sequence of smooth Jordan curves.

\begin{proof}
	By passing to the tails of the sequences if necessary, we choose some $\epsilon > 0$ so that $\gamma_n$ and $\gamma_n'$ are $(Z,\epsilon)$-admissable for all $n$.
	Fix a choice of $m,n$.
		
	We apply a small Hamiltonian perturbation $h_{m,n} \colon \bC \longrightarrow \bR$ to $\gamma'_n$ so that the time $1$ flow of $\gamma'_n$ under $X_{h_{m,n}}$ is transverse to $\gamma_m$.  Choosing $h_{m,n}$ small enough guarantees that the perturbation of $\gamma'_n$ will remain $(Z,\epsilon)$-admissable.  Hence the perturbation of $\gamma_n'$ is graphical over $\gamma_m$ in the sense that their points of intersection occur in the same cyclic orders when travelling around either curve.
	
	To get a clearer picture, we apply a symplectomorphism to $\bC$ that takes $\gamma_m$ to the unit circle $S^1$, and the perturbation of $\gamma'_n$ to a transversal section $s \colon S^1 \rightarrow A$ of an annular Weinstein neighborhood $A$ of $S^1$.  That we can do this is essentially a result of Karlsson \cite{karlsson}, who treats the case of symplectic isotopies of a self-transverse images of the circle in $\bC$, although her analysis applies equally well to symplectic isotopies of self-transverse connected images of a disjoint union of circles.
	
	The section $s$ is a Hamiltonian perturbation of the $0$-section $S^1$.
	Write $s = df$ for a Morse function $f : S^1 \to \bR$.
	Thus, $s$ is the time 1 flow of $S^1$ under the Hamiltonian $f \circ \pi : T^* S^1 \to \bR$.
	Suppose that $f$ attains its minimum value at $\theta_0 \in S^1$.
	Then $f(\theta)-f(\theta_0) = \int_{\theta_0}^\theta df$.
	Thus, $f(\theta)-f(\theta_0)$ is certainly bounded above by the positive area cobounded by $S^1$ and $s$, and this is half the unsigned area cobounded by $s$ and $S^1$.
	It follows that the Hofer norm of $f \circ \pi$ is bounded above by the half the unsigned area between $s$ and $S^1$.
	But this is half the unsigned area between the perturbation of $\gamma'_n$ and $\gamma_m$.  Choosing the perturbations $h_{m,n}$ to limit to $0$ as $m,n \rightarrow \infty$, this unsigned area also tends to 0 as $m,n \rightarrow \infty$ since $\gamma_m, \gamma'_n \rightarrow Z$.
	
	But now Corollary \ref{cor:hoferdistancebound3} applies to tell us that $\ell_i(\gamma_m, \theta) - \ell_i(\gamma'_n, \theta) \rightarrow 0$.
\end{proof}

Hence we have assigned spectral invariants to each PL curve.
Similar arguments may allow one to assign spectral invariants to other Jordan curves satisfying some regularity hypothesis.
However, it is not clear how one might sensibly assign a spectral invariant to a Jordan curve with no regularity hypothesis.

We now verify that the spectral invariants of PL curves vary continuously in families of PL curves (see Definition \ref{defin:pwlinearisotopy}).

\begin{prop}
	\label{prop:pwlinearisotopyspectral}
	Suppose that for $0 < t < 1$,
	\[ Z(t) = \{ z_1(t), \ldots, z_N(t) \} \]
	is an isotopy of PL curves.
	Then for $i=1,2$ and $0 < \theta < \pi$ we have that
	\[ \ell_i( Z(\cdot), \theta ) \colon (0,1) \longrightarrow \bR \colon t \longmapsto \ell_i(Z(t), \theta) \]
	is continuous.
\end{prop}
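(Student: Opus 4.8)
The plan is to reduce continuity of $\ell_i(Z(\cdot),\theta)$ to continuity of the spectral invariants along a smooth family of smooth Jordan curves, which is \Cref{prop:cts_in_deformation}. Fix $t_0\in(0,1)$ and $\delta>0$; I will exhibit a neighbourhood of $t_0$ on which $\ell_i(Z(\cdot),\theta)$ varies by less than $\delta$. The bridge between the PL and smooth worlds is a quantitative refinement of \Cref{lem:spectralforPLcurve}: for any PL curve $W$ and any smooth Jordan curve $\gamma$ that is $(W,\epsilon)$-admissible and encloses the same area as $W$, one has $|\ell_i(\gamma,\theta)-\ell_i(W,\theta)|\le 2\,\mathcal{V}(W,\epsilon)$, where $\mathcal{V}(W,\epsilon)=\sum_j \area\bigl(B_\epsilon(w_j,w_{j+1})\bigr)$ — a quantity that tends to $0$ with $\epsilon$ once the number of edges and the length of $W$ are bounded.

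This quantitative claim is proved by running the argument of \Cref{lem:spectralforPLcurve} verbatim. Choose a defining sequence $\gamma'_n\to W$ of $W$-admissible smooth curves each enclosing $\area(W)$, as in \Cref{defin:spectralforPLcurve}. A small area-preserving Hamiltonian perturbation makes $\gamma'_n$ transverse to $\gamma$ while keeping it $(W,\epsilon)$-admissible, and then $\gamma'_n$ is graphical over $\gamma$ (same cyclic order of intersections, as both are transverse to the foliations of the $B_\epsilon$-tubes). The unsigned area cobounded by $\gamma$ and the perturbed $\gamma'_n$ is at most $\mathcal{V}(W,\epsilon)$ (both curves lie in $\bigcup_j B_\epsilon(w_j,w_{j+1})$) up to a term controlled by the perturbation size; as in \Cref{lem:spectralforPLcurve} the Hofer distance between the two curves is at most half of this, so \Cref{cor:hoferdistancebound3} gives $|\ell_i(\gamma,\theta)-\ell_i(\gamma'_n,\theta)|\le 2\,\mathcal{V}(W,\epsilon)+o(1)$ as the perturbation shrinks, and letting $n\to\infty$ establishes the claim.

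The main construction is the comparison family. Since each $z_j(\cdot)$ is piecewise smooth I argue on one side of $t_0$ at a time, assuming each $z_j$ smooth on a closed interval $J$ with $t_0$ an endpoint; then $\area(Z(t))$ is smooth and $\mathrm{length}(Z(t))$ continuous on $J$. Because $Z(t_0)$ is an embedded PL curve, its pairs of non-adjacent closed edges lie at positive distance; shrinking $J$ and fixing $\epsilon$ small, I may assume that for all $t\in J$ the rectangles $B_\epsilon(z_j(t),z_{j+1}(t))$ are disjoint over non-adjacent index pairs, that $2\,\mathcal{V}(Z(t),\epsilon)<\delta/3$, and that a smooth Jordan curve may be threaded through the thinner chain $B_{\epsilon/4}(z_j(t),z_{j+1}(t))$ with its corners rounded, smoothly in $t$ — here $\epsilon$ must be small relative to the turning angles of $Z(t_0)$ so the rounded arc stays transverse to both foliations in each corner overlap. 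The enclosed area of the threaded curve differs from $\area(Z(t))$ by at most $\mathcal{V}(Z(t),\epsilon/4)$, and a smoothly $t$-varying normal push of the curve of amplitude less than $\epsilon/2$ corrects this exactly (intermediate value theorem in the push amplitude, smooth in $t$ by the implicit function theorem) while keeping the curve in the $\epsilon$-rectangles. This yields a smooth family $\gamma_t$, $t\in J$, with each $\gamma_t$ being $(Z(t),\epsilon)$-admissible and enclosing exactly $\area(Z(t))$.

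The pieces now combine: the quantitative claim gives $|\ell_i(\gamma_t,\theta)-\ell_i(Z(t),\theta)|<\delta/3$ for all $t\in J$; \Cref{prop:cts_in_deformation} makes $t\mapsto\ell_i(\gamma_t,\theta)$ continuous on $J$, so there is $\eta>0$ with $|\ell_i(\gamma_t,\theta)-\ell_i(\gamma_{t_0},\theta)|<\delta/3$ for $t\in J$, $|t-t_0|<\eta$; and the triangle inequality gives $|\ell_i(Z(t),\theta)-\ell_i(Z(t_0),\theta)|<\delta$ for those $t$. Running this on both sides of $t_0$ gives continuity at $t_0$, and $t_0\in(0,1)$ was arbitrary. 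I expect the real obstacle to be the third paragraph: arranging that the comparison curves form an honestly \emph{smooth} family of \emph{admissible} curves realizing \emph{exactly} the prescribed areas, with admissibility — transversality to the foliations near the corners, and disjointness of non-adjacent rectangles — holding uniformly for $t$ near $t_0$.
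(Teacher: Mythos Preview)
Your argument is correct, but it takes a genuinely different and more elaborate route than the paper's. You construct a \emph{smooth family} $\gamma_t$ of admissible approximations and invoke \Cref{prop:cts_in_deformation} on it, sandwiching $\ell_i(Z(t),\theta)$ between $\ell_i(\gamma_t,\theta)$ via your quantitative refinement of \Cref{lem:spectralforPLcurve}. The paper avoids the smooth-family construction entirely: after rescaling so that $\area(Z(t))$ is constant, it fixes $a$ and a nearby $b$, chooses approximating sequences $\gamma_n\to Z(a)$ and $\gamma'_n\to Z(b)$ that are \emph{simultaneously} $(Z(a),\epsilon)$- and $(Z(b),\epsilon)$-admissible, and compares $\gamma_n$ to $\gamma'_n$ directly via the Hofer bound, exactly as in \Cref{lem:spectralforPLcurve}. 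Passing to the limit gives $|\ell_i(Z(a),\theta)-\ell_i(Z(b),\theta)|$ bounded by a constant times the unsigned area between $Z(a)$ and $Z(b)$, which is visibly continuous in $b$.

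What each approach buys: the paper's argument is shorter and sidesteps precisely the obstacle you flag in your last paragraph --- it never needs a family at all, only a single pair of admissible curves for each $(a,b)$. Your approach, on the other hand, yields a reusable intermediate statement (the uniform bound $|\ell_i(\gamma,\theta)-\ell_i(W,\theta)|\le 2\,\mathcal V(W,\epsilon)$ for any admissible $\gamma$), and cleanly separates the analytic input (\Cref{prop:cts_in_deformation}) from the combinatorial/geometric input (the tube construction). Both are valid; the paper's is the more economical realization of the same Hofer-distance idea.
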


\begin{proof}
	First note that the spectral invariants of a $PL$ curve vary continuously with rescaling, since the same is true for smooth curves.  Therefore it is enough to prove the result under the assumption that each $Z(t)$ bounds the same area $A$ for $0 < t < 1$.
	
	Suppose that $a \in (0,1)$.  Then there exist $\delta,\epsilon > 0$ such that whenever $a - \delta < b < a + \delta$ there exist smooth approximations $\gamma_n \rightarrow Z(a)$ and $\gamma'_n \rightarrow Z(b)$ such that $\gamma_n$ and $\gamma'_n$ are both $(Z(a),\epsilon)$ and $(Z(b),\epsilon)$ admissable for each $n$, and so that $\gamma_n$ and $\gamma'_n$ both bound area $A$ for each $n$.
	
	Now we repeat the idea of the proof of Proposition \ref{prop:hoferdistancebound2}.  By perturbing $\gamma'_n$ by a small Hamiltonian $h_n$, we make $\gamma'_n$ both transverse to and graphical over $\gamma_n$.  Then $\vert \ell_i(\gamma_n, \theta) - \ell_i(\gamma'_n, \theta) \vert$ is bounded above by $4d(\gamma_n, \gamma'_n)$ by Corollary \ref{cor:hoferdistancebound3}, and $d(\gamma_n, \gamma'_n)$ is bounded above by half the unsigned area between $\gamma_n$ and $\gamma'_n$.
	
	Taking the perturbations $h_n \rightarrow 0$ as $n \rightarrow \infty$, we see that the limit of this unsigned area as $n \rightarrow \infty$ is the unsigned area between $Z(a)$ and $Z(b)$.  On the other hand,
	\[ \vert \ell_i(\gamma_n, \theta) - \ell_i(\gamma'_n, \theta) \vert \rightarrow \vert \ell_i(Z(a), \theta) - \ell_i(Z(b), \theta) \vert \,\, {\rm as} \,\, n \rightarrow \infty {\rm .} \]
	
	Thus we see that
	\[ \vert \ell_i(Z(a), \theta) - \ell_i(Z(b), \theta) \vert \]
	is bounded above by half the unsigned area between $Z(a)$ and $Z(b)$, and we are done.
\end{proof}
\subsection{Elegant inscriptions.}
\label{subsec:grace}

	\begin{figure}
		\labellist
		\endlabellist
		\centering
		\includegraphics[scale=0.08]{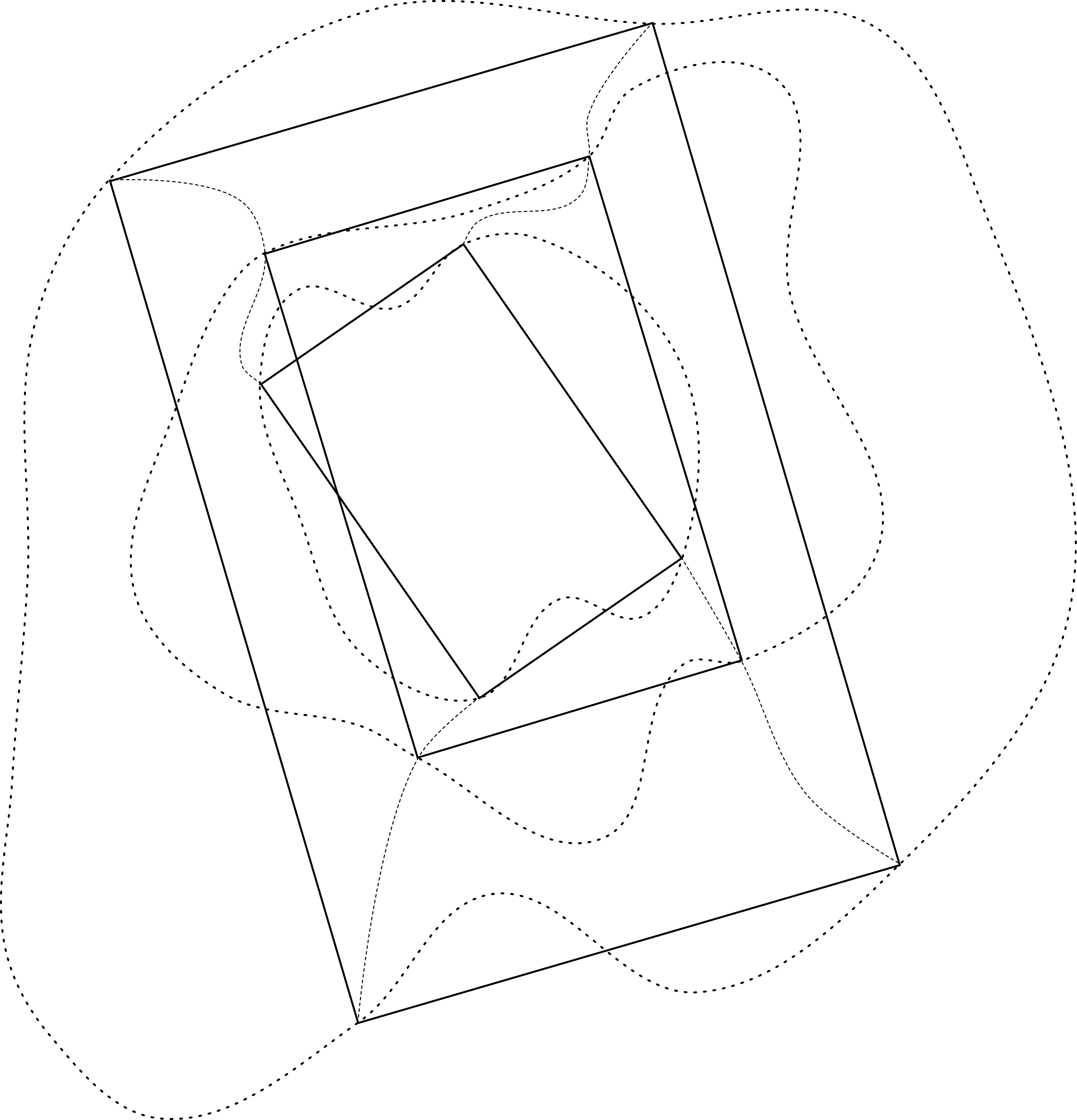}
		\caption{We show an interval's worth of nested Jordan curves $\gamma_r$ for $0 \leq r \leq 1$, together with a path of elegant rectangles $Q_r$ inscribed in $\gamma_r$.}
		\label{fig:path_of_gracefuls}
	\end{figure}

	\begin{figure}
		\labellist
		\pinlabel {$+$} at 900 800
		\pinlabel {$=$} at 2300 800
		\pinlabel {$=$} at 4000 800
		\endlabellist
		\centering
		\includegraphics[scale=0.06]{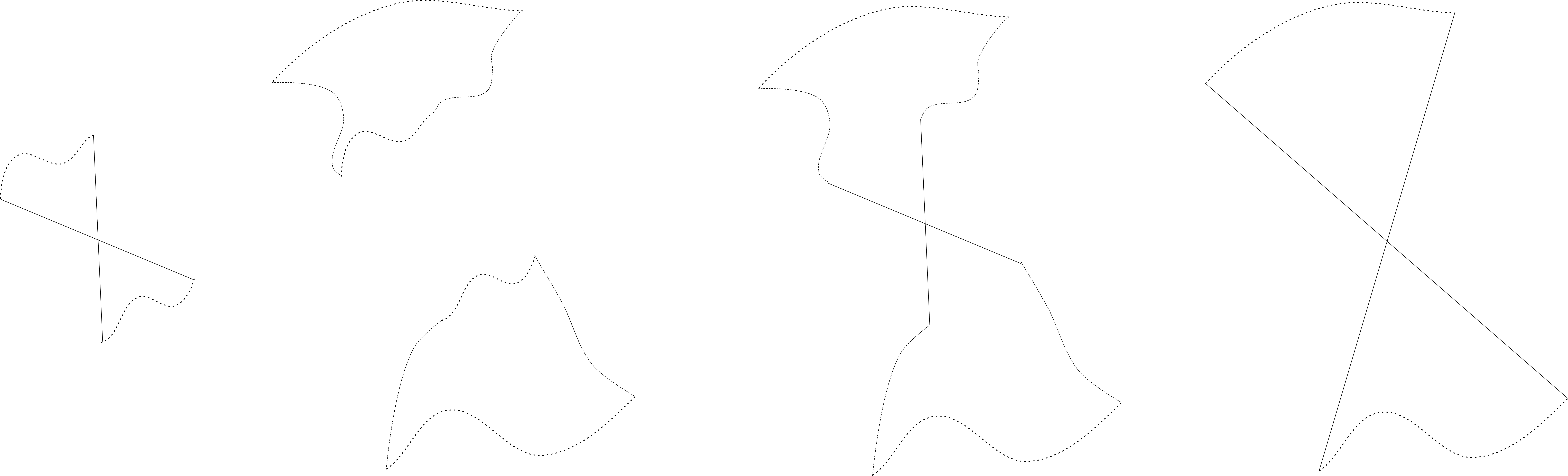}
		\caption{We consider the actions $\cA(Q_0)$ and $\cA(Q_1)$ of rectangles from Figure \ref{fig:path_of_gracefuls}, and give a formula which shows that $0 \leq \cA(Q_1) - \cA(Q_0) \leq \area(\gamma_1) - \area(\gamma_0)$.  The left-most term is the action $\cA(\gamma_0)$, the second term is an area lying between $0$ and $\area(\gamma_1) - \area(\gamma_0)$, the third term the result of adding the two areas on the left, and the third term is the action $\area(\gamma_1)$.}
		\label{fig:variation_action}
	\end{figure}

We consider how the action changes along a path of elegant rectangles.  We note that the action is defined even for rectangles inscribed in piecewise smooth Jordan curves, essentially since contractible loops on piecewise smooth Lagrangians bound zero symplectic area.

\begin{lem}
	\label{lem:addingabitoficecream}
	Suppose that $\gamma_r$ for $0 \leq r \leq 1$ is a nested family of piecewise smooth Jordan curves, and suppose that we also have a family $Q_r \subset \gamma_r$ of elegant $\theta$-rectangles.  Then we have the following
	\[ 0 \leq \cA(Q_1) - \cA(Q_0) \leq \area(\gamma_1) - \area(\gamma_0) {\rm.} \]
\end{lem}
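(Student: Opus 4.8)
The plan is to express the action $\cA(Q_r)$ of an elegant $\theta$-rectangle $Q_r \subset \gamma_r$ concretely as a sum of two terms -- the Hamiltonian term along the trajectory and the symplectic area of a preferred capping disc disjoint from $\Delta_\bC$ -- and then to track how each term varies as $r$ increases from $0$ to $1$. Since the rectangles $Q_r$ are elegant, by \Cref{defin:maximally_graceful} each $Q_r$ is realised by an isotopy of $\gamma_r$ to a rectangular Jordan curve $\Gamma_r$ fixing the four vertices, and this isotopy can be chosen to vary continuously in $r$. This elegance is exactly what pins down the homotopy class of the preferred capping disc in a way that varies continuously with $r$, so that the difference $\cA(Q_1) - \cA(Q_0)$ is computed by a single region in $\bC^2$ whose boundary lies on the family $\gamma_r \times \gamma_r$ together with the rotated copies $R_\theta(\gamma_r \times \gamma_r)$.

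The key computation, illustrated in \Cref{fig:variation_action}, is that $\cA(Q_1) - \cA(Q_0)$ equals the symplectic area swept out between $\gamma_0 \times \gamma_0$ and $\gamma_1 \times \gamma_1$ along the nested family, evaluated against the relevant capping region. Concretely, I would take the two preferred cappings $D_0$ and $D_1$ for the trajectories $\tau_0, \tau_1$ corresponding to $Q_0, Q_1$, and glue them along the cylinder $(t,r) \mapsto \tau_r(t)$ traced out by the family of trajectories -- exactly as in the proof of \Cref{lem:hoferdistancebound1}. Because $\gamma_s$ lies strictly inside $\gamma_t$ for $s < t$, the projections $\pid$ of the relevant arcs never vanish along the family away from the endpoints, so the whole swept region stays disjoint from $\Delta_\bC$ and the computation is legitimate; the Hamiltonian terms along $\tau_0$ and $\tau_1$ are equal since they are both trajectories of the fixed Hamiltonian $H(z,w) = |z-w|^2/4$ for the same time $\theta$, and the rectangle sizes enter only through the capping-disc areas. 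One then reads off that $\cA(Q_1) - \cA(Q_0)$ is the signed area enclosed between the nested curves after passing through the $R_\theta$ picture; nestedness forces this to be nonnegative, giving the left-hand inequality $0 \leq \cA(Q_1) - \cA(Q_0)$, while the total area available between $\gamma_0$ and $\gamma_1$ is exactly $\area(\gamma_1) - \area(\gamma_0)$, giving the right-hand bound.

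I expect the main obstacle to be the bookkeeping that shows the swept region really does compute the action difference \emph{with the correct sign and the correct total}, i.e. that no part of the ``ice cream'' being added is counted twice and that the region's area is bounded above by $\area(\gamma_1) - \area(\gamma_0)$ rather than some larger multiple. This is where elegance does the real work: it guarantees that the four arcs of $\gamma_r$ cut out by the vertices of $Q_r$ sit in the ``expected'' cyclic position, so that the capping region degenerates correctly as $\gamma_1 \to \gamma_0$ and the swept area is precisely the region between the two curves (with multiplicity one), not a more complicated combination. Once that is in place, the two inequalities follow from the fact that a positive area form integrated over a region contained in (the bounded complementary region of) $\gamma_1$ minus (that of) $\gamma_0$ lies between $0$ and $\area(\gamma_1) - \area(\gamma_0)$.
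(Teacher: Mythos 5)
There is a genuine gap here, and also a concrete error. The error first: you assert that the Hamiltonian terms along $\tau_0$ and $\tau_1$ are equal because both are time-$\theta$ trajectories of $H(z,w)=\vert z-w\vert^2/4$, with the rectangle sizes entering only through the capping discs. This is false: along the trajectory of a generator corresponding to $Q_r$ the quantity $\vert z-w\vert$ is the (constant) diagonal length of $Q_r$, so the Hamiltonian term is $\theta\vert z-w\vert^2/4$ and changes with $r$ precisely because the rectangles change size. The size of the rectangle enters through \emph{both} terms of the action, and any correct computation must track the cancellation between them.

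The gap is that your central step --- showing that the glued region computes $\cA(Q_1)-\cA(Q_0)$ with multiplicity one and that its area is sandwiched between $0$ and $\area(\gamma_1)-\area(\gamma_0)$ --- is exactly the part you defer to ``bookkeeping,'' and the route you propose for it does not go through. The cylinder $(t,r)\mapsto\tau_r(t)$ cannot be handled as in \Cref{lem:hoferdistancebound1}, because there the side boundaries of the cylinder lie on a \emph{fixed} Lagrangian moved by a Hamiltonian isotopy; here the curves $\gamma_r$ are nested, hence enclose different areas, hence are not Hamiltonian isotopic (\Cref{lem:equiareal_hamiltonian} does not apply), and the side boundaries of your cylinder sweep through $\bigcup_r \gamma_r\times\gamma_r$, which is not Lagrangian. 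So the Stokes-type cancellation you are implicitly relying on is unavailable. The paper resolves this with a different device: using nestedness and elegance, the four vertex loci $r\mapsto$ (vertices of $Q_r$) together with two arcs of $\gamma_0$ and two arcs of $\gamma_1$ assemble into a single embedded piecewise-smooth Jordan curve $\gamma$ in which the \emph{entire} family $Q_r$ is inscribed. Along a path of intersection points of the fixed pair $\gamma\times\gamma$, $R_\theta(\gamma\times\gamma)$ the action is constant, so $\cA(Q_1;\gamma)=\cA(Q_0;\gamma)=\cA(Q_0;\gamma_0)$; the remaining comparison between $\cA(Q_1;\gamma)$ and $\cA(Q_1;\gamma_1)$ is a concrete difference of capping areas equal to a region pinched between the two curves, visibly lying in $[0,\area(\gamma_1)-\area(\gamma_0)]$ (\Cref{fig:variation_action,fig:icecreamtopping}). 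Without some substitute for this auxiliary-curve construction, your argument does not establish either inequality.
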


\begin{proof}
	The situation is illustrated in Figure \ref{fig:path_of_gracefuls}, in which is depicted a path of elegant $\theta$-rectangles $Q_r$ (we are imagining $0 < \theta < \pi/2$ in this case) in a nested family of Jordan curves $\gamma_r$.
	
	In Figure \ref{fig:variation_action} we show the calculation that we wish to make to establish the lemma.  The sum on the left is $\cA(Q_0)$ plus some area that lies between $0$ and $\area(\gamma_1) - \area(\gamma_0)$.  The middle term is just the sum of these areas, while the final term is $\cA(Q_1)$.  The non-obvious equality is the second one; we shall give a topological argument.
	
	In Figure \ref{fig:icecreamtopping} we have drawn a piecewise-smooth Jordan curve $\gamma$ made up of two arcs of $\gamma_0$, two arcs of $\gamma_1$, and the four loci traced out by the (vertices of) the rectangles $Q_r \subset \gamma_r$.  Note that the nesting of the curves $\gamma_r$ ensures that $\gamma$ does not self-intersect.
	
	There is a path of intersection points between $\gamma \times \gamma$ and $R_\theta(\gamma \times \gamma)$ corresponding to the path of inscribed rectangles $Q_r \subset \gamma$.  Hence each of these rectangles, when considered as an inscribed rectangle of $\gamma$, has the same action, and one sees from the figure that this is the action of $Q_0$ considered as an inscribed rectangle of $\gamma_0$.  On the other hand, the action of $Q_1$ considered as an inscribed rectangle of $\gamma_1$ differs from the action of $Q_1$ considered as an inscribed rectangle of $\gamma$ (and so differs from the action of $Q_0$ considered as an inscribed rectangle of $\gamma_0$) exactly by the addition of the second term of Figure \ref{fig:variation_action}.  Thus we have verified that the sum on the left of Figure \ref{fig:variation_action} agrees with the right hand side, and so we are done.
\end{proof}

	\begin{figure}
	\labellist
	\endlabellist
	\centering
	\includegraphics[scale=0.08]{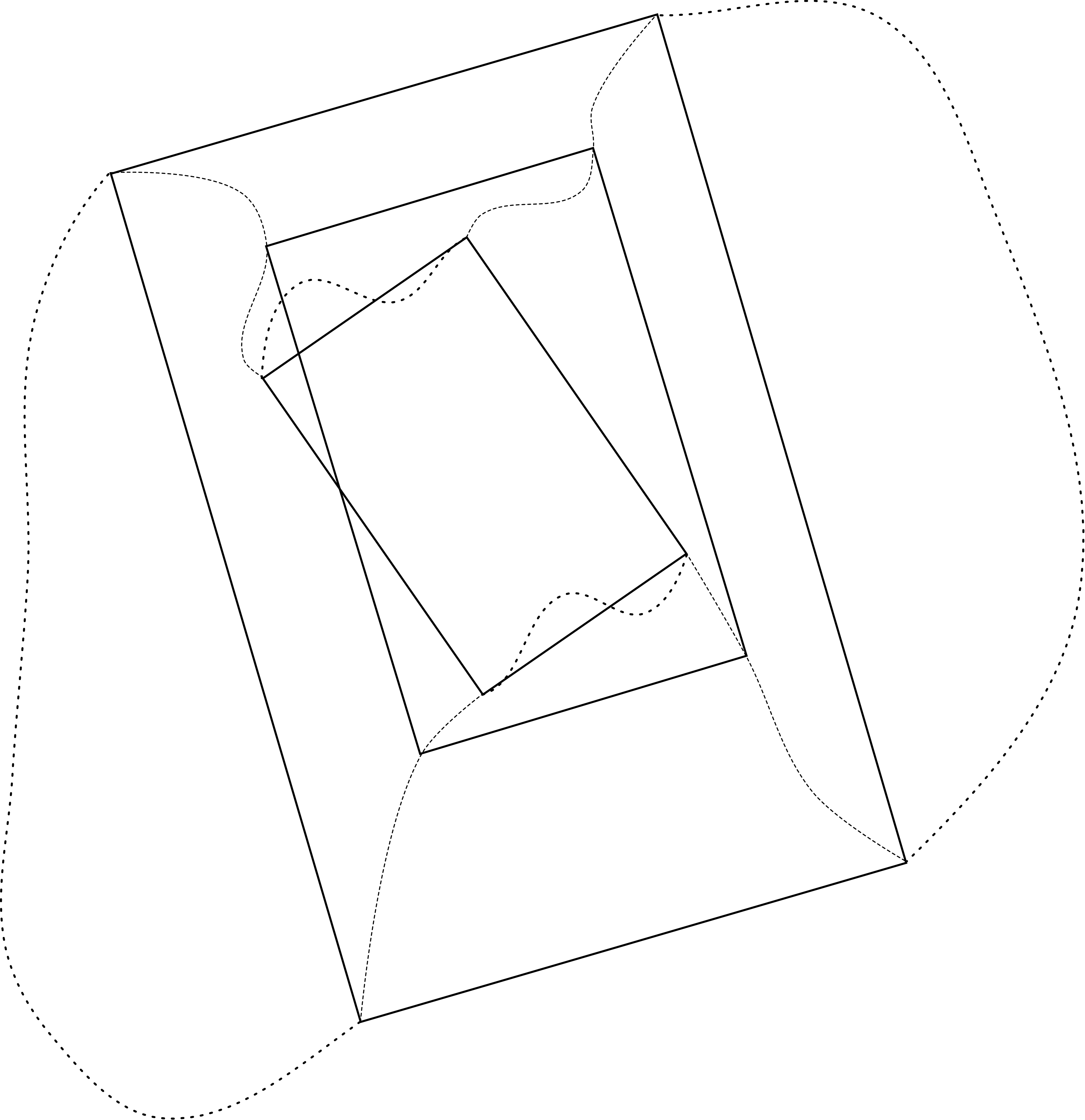}
	\caption{We give a path of rectangles $Q_r$ in a piecewise-smooth Jordan curve $\gamma$.  The action is constant along the path.}
	\label{fig:icecreamtopping}
	\end{figure}

\section{Lipschitz graphs and the proof of Theorem~\ref{thm:thetarect}.}
\label{sec:lipschitz_et_cetera}

Suppose that $0 < \theta \leq \pi/2$ and
\[ f,g \colon [a,b] \longrightarrow \bR \]
are Lipschitz continuous of Lipschitz constant less than $\tan(\frac{\pi + \theta}{4})$, with $f(a)=g(a)$, $f(b)=g(b)$,
and $f(p) > g(p)$ for all $a < p < b$.  We shall write $\gamma(f,g) \subset \bR^2$ for the Jordan curve formed as the union of the graphs of $f$ and $g$,
\[\gamma(f,g) = \Gamma(f) \cup \Gamma(g) 
{\rm .} \]

\begin{lem}
	\label{lem:tao_curves_inscribe_graceful}
	If $Q \subset \gamma(f,g)$ is an inscribed $\theta$-rectangle, then $Q$ is elegant.
\end{lem}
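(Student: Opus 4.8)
The plan is to analyse an inscribed $\theta$-rectangle $Q$ by recording, for each of its four vertices, which of the two graphs $\Gamma(f)$, $\Gamma(g)$ it lies on and what its $x$-coordinate is; then to use the Lipschitz hypothesis to cut the list of possibilities down to a single combinatorial type; and finally to read off the required isotopy from the resulting picture.

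Fix the vertices $V_1,V_2,V_3,V_4$ of $Q$ in cyclic order, with $x$-coordinates $\xi_j$ and heights $\eta_j$, and call $V_j$ an \emph{$f$-vertex} or a \emph{$g$-vertex} according as it lies on $\Gamma(f)$ or on $\Gamma(g)$ (a vertex at one of the two corners of $\gamma(f,g)$ is of both kinds, and I will treat this degenerate case last). The basic observation is a consequence of the hypotheses, valid for the common Lipschitz constant $L<\tan(\tfrac{\pi+\theta}{4})$: since $|f(\xi)-f(\xi')|\le L|\xi-\xi'|$, $|g(\xi)-g(\xi')|\le L|\xi-\xi'|$, and $g<f$ on $(a,b)$, one has $\eta_i-\eta_j\le L|\xi_i-\xi_j|$ for any two vertices $V_i$, $V_j$ \emph{unless} $V_i$ is an $f$-vertex and $V_j$ a $g$-vertex. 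In particular every chord of $Q$ joining two vertices of the same kind has slope of absolute value at most $L$, i.e.\ points within angle $\arctan L<\tfrac{\pi+\theta}{4}$ of the horizontal.

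The heart of the proof is then a short case analysis on the $f$/$g$ pattern of $(V_1,V_2,V_3,V_4)$, combined with the elementary fact that the six vertex-joining chords of a $\theta$-rectangle --- its four sides and two diagonals --- lie in the four directions $\phi$, $\phi+\tfrac{\pi}{2}$, $\phi\pm\tfrac{\theta}{2}$ for a suitable tilt $\phi$. If all four vertices are of the same kind, then all six chords are shallow; but two perpendicular directions together with one of the two diagonal directions cannot all lie within angle $\tfrac{\pi+\theta}{4}$ of the horizontal, and it is precisely here that $\tan(\tfrac{\pi+\theta}{4})$ is the sharp constant. The same mechanism kills a $3$--$1$ split. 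If the split is $2$--$2$ with the two $f$-vertices opposite corners of $Q$, then both diagonals are shallow, and translating the relevant inequalities of the previous paragraph --- those attached to the sides and to the diagonals --- into trigonometric bounds on $\phi$ forces both $\phi>\tfrac{\pi-\theta}{4}$ and $\phi<\tfrac{\pi-\theta}{4}$, a contradiction. Hence the only surviving possibility is a $2$--$2$ split with the two $f$-vertices adjacent, hence forming a side of $\Gamma=\partial Q$, as do the two $g$-vertices; and since $\Gamma(f)$ and $\Gamma(g)$ are swept out in increasing and in decreasing $x$ respectively, while opposite sides of $Q$ have equal horizontal extent, the four vertices then occur on $\gamma(f,g)$ in the same cyclic order, up to reflection, as on $Q$. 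Thus $Q$ is graceful. I expect this case analysis --- in particular chasing the sharp constant through the $2$--$2$-opposite case in every tilt range, where the explicit coordinates $\xi_j=\pm a_1,\pm a_2$ degenerate once a side of $Q$ becomes vertical --- to be the main obstacle.

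It remains to promote gracefulness to elegance. Once the type pattern and the $x$-order of the vertices are pinned down as above, a further short round of the inequalities of the second paragraph shows that for each of the four arcs of $\gamma(f,g)$ cut off by the vertices, the disc bounded by that arc and the corresponding side of $\Gamma$ contains neither of the remaining two vertices; here one uses that a $g$-vertex cannot lie far above an $f$-vertex, which is exactly what prevents an arc of $\gamma(f,g)$ from winding around another vertex. Combined with gracefulness, this identifies $(\gamma(f,g),Q)$ and $(\Gamma,Q)$ as the same configuration: each is a union of two graphs over a closed interval carrying the four marked points in matching cyclic position, the two extreme-$x$ corners of $\Gamma$ playing the role of the points where the graphs meet; and any two such configurations are related by an ambient isotopy of the plane fixing the four points, obtained by interpolating the intervals and the strictly ordered pairs of graph functions. (If a side of $\Gamma$ is vertical, apply first an arbitrarily small rotation of the plane, under which elegance is invariant.) Composing, $\gamma(f,g)$ is isotopic to $\Gamma$ through an isotopy fixing $Q$, so $Q$ is elegant. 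Finally the degenerate case of a vertex at a corner of $\gamma(f,g)$ is recovered as a limit of the non-degenerate situation, elegance being preserved along a continuous family of non-degenerate inscribed $\theta$-rectangles.
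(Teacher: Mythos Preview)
Your case analysis has a genuine gap in the $3$--$1$ case.  You claim that ``the same mechanism kills a $3$--$1$ split,'' i.e.\ that two perpendicular side directions together with the one diagonal joining the three like-type vertices cannot all lie within angle $\tfrac{\pi+\theta}{4}$ of the horizontal.  This is false.  When both side directions are shallow one deduces (say) $\phi\in(\tfrac{\pi-\theta}{4},\tfrac{\pi+\theta}{4})$, and then exactly one of the two diagonal directions is forced to be steep while the other remains shallow; the diagonal through the three like-type vertices may well be the shallow one.  For instance, take $\theta=\pi/2$ and the square with vertices $V_1=(0,0)$, $V_2=(\cos\alpha,\sin\alpha)$, $V_3=(\cos\alpha-\sin\alpha,\sin\alpha+\cos\alpha)$, $V_4=(-\sin\alpha,\cos\alpha)$ for any $\alpha\in(\pi/8,3\pi/8)$: the slopes of $V_2V_3$, $V_3V_4$ and the diagonal $V_2V_4$ are $-\cot\alpha$, $\tan\alpha$, $-\tan(\tfrac{\pi}{4}-\alpha)$, all of absolute value below $\tan(3\pi/8)$, so $V_2,V_3,V_4$ can lie on a graph $\Gamma(f)$ with Lipschitz constant under $1+\sqrt{2}$ and $V_1$ on $\Gamma(g)$.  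Thus $3$--$1$ splits \emph{do} occur, and your surviving cases are both $3$--$1$ and $2$--$2$-adjacent.  Since your subsequent gracefulness argument (``the two $f$-vertices form a side'') and your elegance construction both presuppose the $2$--$2$-adjacent pattern, the proof does not close.

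The paper avoids this difficulty by ordering the vertices by $x$-coordinate, $x_1<x_2<x_3<x_4$, which forces the diagonals to be $\{v_1,v_4\}$ and $\{v_2,v_3\}$.  Two short slope arguments then show (i) no three $x$-consecutive vertices share a graph, and (ii) the middle pair $v_2,v_3$ lies on different graphs.  From (ii) alone, irrespective of where $v_1,v_4$ sit, the cyclic order on $\gamma(f,g)$ is $v_1,v_2,v_4,v_3$, which is the rectangle order; this handles the $2$--$2$-adjacent and the surviving $3$--$1$ configurations in one stroke.  The paper then simply asserts that graceful inscriptions in $\gamma(f,g)$ are elegant; your explicit isotopy argument for that step is more than the paper gives, and would be worth keeping once you patch the case analysis to cover $3$--$1$ as well.
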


	\begin{figure}
	\labellist
	\pinlabel {$\theta$} at 2900 1750
	\endlabellist
	\centering
	\includegraphics[scale=0.04]{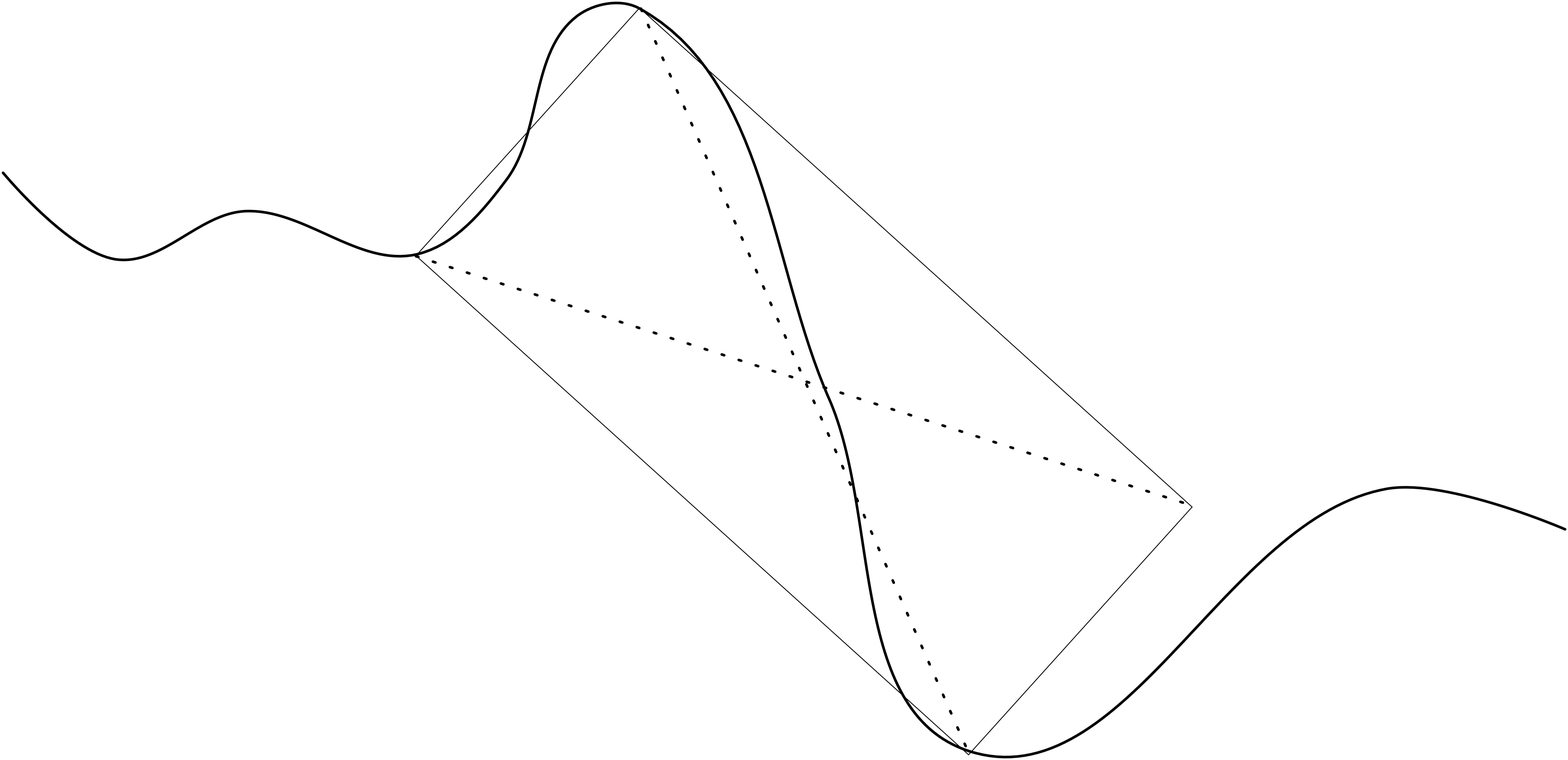}
	\caption{We show a $\theta$-rectangle in which three of the vertices lie on the graph $\Gamma(f)$ of a Lipschitz function $f$.  The maximum of the absolute values of the slopes of the left-hand short edge and the diagonal lying on $\Gamma(f)$ is bounded below by $\tan(\frac{\pi + \theta}{4})$.}
	\label{fig:lipschitz_gets_used}
	\end{figure}

\begin{proof}
	Write the vertices of $Q$ as $v_i = (x_i,y_i)$ for $i = 1,2,3,4$ and where $x_1 \leq x_2 \leq x_3 \leq x_4$.  First note that if any of the inequalities are equality, then $Q$ is necessarily elegant, so let us assume in fact that
	\[ x_1 < x_2 < x_3 < x_4 {\rm ,} \]
	so that the diagonals of the rectangle are $\{ v_1, v_4 \}$ and $\{ v_2, v_3 \}$.
	
	First, we argue that no three consecutive vertices are contained in the same graph $\Gamma(f)$ or $\Gamma(g)$.
	Suppose for a contradiction that this is not the case, and without loss of generality that $v_1, v_2, v_3 \in \Gamma(f)$.  
	The absolute values of the slopes of the lines $v_1 v_2$ and $v_2 v_3$ take the form $\tan(\alpha)$ and $\tan(\pi/2-\alpha+\theta/2)$ for some angle $0 < \alpha < \pi/2$ (see Figure \ref{fig:lipschitz_gets_used}).
	The maximum of these two values is minimized when the arguments agree.
	Thus, at least one of the slopes is at least $\tan(\frac{\pi + \theta}{4})$ in absolute value, contradicting the Lipschitz condition on~$f$.
		
	Second, we argue that $v_2, v_3$ cannot be contained in the same graph $\Gamma(f)$ or $\Gamma(g)$.
	Suppose for a contradiction that they are.
	Without loss of generality, suppose $v_2,v_3 \in \Gamma(f)$.
	By the previous paragraph, we then have $v_1,v_4 \in \Gamma(g)$.
	Assume without loss of generality that $y_1 < y_2$ (equivalently $y_4 > y_3$).  Then, the maximum among the absolute values of the slopes of the lines $v_2v_3$, $v_3v_4$ must be at least $\tan(\frac{\pi + \theta}{4})$, again contradicting the Lipschitz condition on $f$ since $f(x_4) > y_4$.
	
	It follows that $v_2$ and $v_3$ belong on different graphs. 
	Without loss of generality, $v_2 \in \Gamma(f)$ and $v_3 \in \Gamma(g)$.
	Walking clockwise around $\gamma(f,g)$ one passes through $v_1$, then $v_2$, then $v_4$, then $v_3$.
	Hence $Q$ is gracefully inscribed in $\gamma(f,g)$; but a graceful inscription in $\gamma(f,g)$ is necessarily elegant, and this concludes the proof.
	
\end{proof}

With this in hand, and following our work in the previous section, we are ready to establish Proposition \ref{prop:varying_through_graceful}.

\begin{proof}[Proof of Proposition \ref{prop:varying_through_graceful}]  Suppose that $Z(t) \subset \bC$ for $0 \leq t \leq 1$ is a PL isotopy of PL Jordan curves $Z(t) = \{ z_1(t), \ldots, z_n(t) \}$ such that if $0 \leq s < t \leq 1$ then $Z(s)$ is contained inside the bounded complementary region to $Z(t)$, and further suppose that for each $0 \leq t \leq 1$, all $\theta$-rectangles inscribed in $Z(t)$ are elegant.
	
	Firstly we note that since $Z(t)$ is piecewise linear, there are finitely many components of
	\[ Z(t) \times Z(t) \cap R_\theta(Z(t) \times Z(t)) {\rm .} \]
	The diagonal $\Delta_{Z(t)}$ is a 1-dimensional component, although there may be other 1-dimensional components as well (such as the case in which $Z(t)$ is a rectangle).
	Furthermore, since $Z(t)$ is a PL isotopy of PL curves, there are finitely many times
	\[ 0 = t_1 < t_2 < \cdots < t_M = 1 \]
	such that the map
	\[Z(t) \times Z(t) \cap R_\theta(Z(t) \times Z(t)) \longmapsto t \]
	is a product fibration over $(t_j, t_{j+1})$ for $1 \leq j \leq M-1$.\footnote{It is for this reason that have taken the trouble to verify the good behavior of the spectral invariants for isotopies of PL graphs.}
	
	Let us write $\cA(Z(t),\theta) \subset \bR$ for the set of the actions of all $\theta$-rectangles inscribed in $Z(t)$.
	Observe that $\cA$ is constant on each component of $Z(t) \times Z(t) \cap R_\theta(Z(t) \times Z(t))$, even those of positive dimension.
	Then we see that, for $1 \leq j \leq M-1$, 
	\[ A := \bigcup_{t_j < t < t_{j+1}} \{ \{t\} \times \cA(Z(t),\theta) \} \subset (t_j, t_{j+1}) \times \bR \]
	is the union of the graphs of finitely many functions, say of
	\[ a_k \colon (t_j, t_{j+1}) \longrightarrow \bR \]
	for $1 \leq k \leq P_j$.
	Furthermore we have by Proposition \ref{lem:addingabitoficecream}
	\[ 0 \leq a_k(t) - a_k(s) \leq \area(Z(t)) - \area(Z(s)) \]
	for all $t_j < s < t < t_{j+1}$ and $1 \leq k \leq P_j$.
	
	Now by spectrality and continuity (Proposition \ref{prop:pwlinearisotopyspectral}), the graph of
	\[ \ell_i(Z(\cdot), \theta) \colon (t_j, t_{j+1}) \longrightarrow \bR \colon t \longmapsto \ell_i(Z(t), \theta) \]
	is a subset of $A$.  Therefore we have
	\[ 0 \leq \ell_i(Z(t),\theta) - \ell_i(Z(s),\theta) \leq \area(Z(t)) - \area(Z(s)) \]
	for all $t_j < s < t < t_{j+1}$.  So by continuity of $\ell_i(Z(\cdot), \theta)$ we have
	\[ 0 \leq \ell_i(Z(t_{j+1}),\theta) - \ell_i(Z(t_j),\theta) \leq \area(Z(t_{j+1})) - \area(Z(t_j)) \]
	for $1 \leq j \leq M-1$.  Summing from $j=1$ to $j=M-1$ gives
	\[ 0 \leq \ell_i(Z(1),\theta) - \ell_i(Z(0), \theta) \leq \area(Z(1)) - \area(Z(0)) \]
	as required.
\end{proof}

We now build up an implication chain to establish Theorem \ref{thm:thetarect}.  Let us suppose then that $f,g \colon [0,1] \longrightarrow \bR$ satisfy the conditions at the start of this section.

\begin{prop}
	\label{prop:canapproxtaocurvebyPLcurves}
	There exists a $\delta > 0$ and a sequence of PL curves $C_n$ of uniformly bounded length, such that $C_n \rightarrow \gamma(f,g)$ and
	\[ \delta < \ell_i(C_n,\theta) < \area(C_n) - \delta {\rm .} \]
\end{prop}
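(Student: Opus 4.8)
The plan is to take $C_n$ to be piecewise-linear interpolations of $f$ and $g$, and to pin down their spectral invariants by trapping each $C_n$ between a fixed ``seed'' curve and itself inside a strictly nested family of elegant-only PL curves, to which Proposition~\ref{prop:varying_through_graceful} applies. For each $n$ let $f_n,g_n\colon[0,1]\to\bR$ be the piecewise-linear interpolations of $f$ and $g$ at the mesh $0,1/n,\dots,1$. Piecewise-linear interpolation does not increase the Lipschitz constant, so $f_n,g_n$ remain Lipschitz with a common bound $L<\tan(\tfrac{\pi+\theta}{4})$, agree at the endpoints, and satisfy $f_n>g_n$ on $(0,1)$; hence $C_n:=\gamma(f_n,g_n)$ is a PL Jordan curve, and by Lemma~\ref{lem:tao_curves_inscribe_graceful} every $\theta$-rectangle inscribed in $C_n$ is elegant. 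Uniform continuity of $f,g$ gives $f_n\to f$ and $g_n\to g$ uniformly, so $C_n\to\gamma(f,g)$ in the Hausdorff metric; each $C_n$ has length at most $2\sqrt{1+L^2}$, so the lengths are uniformly bounded; and $\area(C_n)=\int_0^1(f_n-g_n)\to A:=\int_0^1(f-g)>0$, so $\area(C_n)$ is bounded and bounded away from $0$ for $n$ large.

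Next I would fix, once and for all, a PL curve $\gamma_-=\gamma(\tilde f,\tilde g)$ with $\tilde f,\tilde g$ piecewise-linear of Lipschitz constant less than $\tan(\tfrac{\pi+\theta}{4})$, supported on a small interval $[a,b]\subset(0,1)$, whose bounded region lies strictly inside the region bounded by $\gamma(f,g)$; this is possible because $f>g$ on $(0,1)$. For each $n$ large enough that the region of $\gamma_-$ is contained in that of $C_n$, one constructs a strictly nested PL isotopy of PL Jordan curves, inscribing only elegant $\theta$-rectangles throughout, running from $\gamma_-$ to $C_n$: first widen the support interval from $[a,b]$ to $[0,1]$ by a monotone piecewise-linear deformation of the defining points, ending with the top graph weakly below $f_n$ and the bottom graph weakly above $g_n$; then linearly interpolate the two defining graph functions up to $(f_n,g_n)$. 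Convex combinations preserve the Lipschitz bound, so Lemma~\ref{lem:tao_curves_inscribe_graceful} keeps every curve elegant-only, and the monotone choices keep the family strictly nested. Proposition~\ref{prop:varying_through_graceful} then gives
\[ \ell_i(\gamma_-,\theta)\ \le\ \ell_i(C_n,\theta)\ \le\ \ell_i(\gamma_-,\theta)+\area(C_n)-\area(\gamma_-) \qquad (i=1,2), \]
so $\ell_i(C_n,\theta)\ge\ell_i(\gamma_-,\theta)$ and $\area(C_n)-\ell_i(C_n,\theta)\ge\area(\gamma_-)-\ell_i(\gamma_-,\theta)$. Setting $\delta:=\min_{i=1,2}\{\ell_i(\gamma_-,\theta),\ \area(\gamma_-)-\ell_i(\gamma_-,\theta)\}$ and passing to a tail of $(C_n)$ yields $\delta\le\ell_i(C_n,\theta)\le\area(C_n)-\delta$ for all $n$; replacing $\delta$ by $\delta/2$ gives the strict inequalities in the statement.

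Everything then rests on the strict bounds $0<\ell_i(\gamma_-,\theta)<\area(\gamma_-)$ for the fixed seed curve, and this is the step I expect to be the main obstacle. By spectrality (Theorem~\ref{thm:basic_properties}(2)), $\ell_i(\gamma_-,\theta)=\cA(Q)$ for a genuine --- hence non-degenerate, and by Lemma~\ref{lem:tao_curves_inscribe_graceful} elegant --- inscribed $\theta$-rectangle $Q$ of $\gamma_-$, so it suffices to show that the action of a non-degenerate elegant $\theta$-rectangle is strictly positive and strictly less than the enclosed area; by the duality of Proposition~\ref{prop:duality} the second statement is the first applied to the same curve at the angle $\pi-\theta$ (noting that $(\pi-\theta)$-rectangles coincide with $\theta$-rectangles, so $\gamma_-$ also inscribes only elegant $(\pi-\theta)$-rectangles). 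Strict positivity should follow from the area interpretation of the action underlying Lemma~\ref{lem:addingabitoficecream} and Figure~\ref{fig:variation_action}: the region whose symplectic area equals $\cA(Q)$ has non-empty interior when $Q$ is non-degenerate --- equivalently, writing $\cA(Q)=\theta\rho^2-\int_D\omega$ with $\rho$ the circumradius of $Q$ and $D$ the preferred capping disc disjoint from $\Delta_\bC$, as in the proof of Proposition~\ref{prop:duality}, the capping term satisfies $\theta\rho^2-\area(\gamma_-)<\int_D\omega<\theta\rho^2$. The remaining work --- the stagewise construction of the nested elegant PL isotopies, the care required near the endpoints $x=0,1$ where the region pinches, and the Lipschitz bookkeeping for convex combinations --- is routine.
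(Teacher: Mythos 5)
Your architecture is the same as the paper's: approximate $\gamma(f,g)$ by two-graph PL curves $C_n$, run a nested, elegant-only PL isotopy from a fixed seed curve out to $C_n$, and squeeze $\ell_i(C_n,\theta)$ using \Cref{prop:varying_through_graceful} together with strict bounds $0<\ell_i(\textrm{seed},\theta)<\area(\textrm{seed})$. The one substantive deviation is the choice of seed, and that is exactly where your argument has a gap. The paper takes the seed to be a small \emph{square} $S$ with horizontal and vertical diagonals (so that $S$ is itself a union of two graphs of Lipschitz constant $1<\tan(\frac{\pi+\theta}{4})$), precisely because for a square the required positivity reduces, via spectrality, to the explicitly checkable fact that every $\theta$-rectangle inscribed in a square has positive action. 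Your generic seed $\gamma_-$ forces you instead to prove the general claim that every non-degenerate elegant inscribed $\theta$-rectangle has action strictly between $0$ and the enclosed area; you flag this as the main obstacle and offer only that it ``should follow from the area interpretation of the action.'' That is not a proof, and nothing in \Cref{thm:basic_properties} supplies it: Property (1) constrains the spectral invariants, not the actions of individual generators, and not strictly. The repair is simply to choose the seed to be a square, where the inscribed $\theta$-rectangles and their actions can be exhibited by hand. (Your use of duality to convert the lower bound at angle $\pi-\theta$ into the upper bound at $\theta$ is valid but unnecessary once the seed is explicit; the paper notes duality is not needed for this argument.)

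A secondary point: the construction of the nested, elegant-only PL isotopy from the seed to $C_n$ is where the paper's real labor lies (Lemma \ref{lem:approximatingthetaocurvebysequencesofPLisotopies}), because the region between the graphs pinches to a point at $x=0$ and $x=1$. One cannot naively ``widen the support interval by a monotone PL deformation'' while keeping every intermediate curve a union of two graphs with the stated Lipschitz bound; the paper grows the square outward in $n$ stages along PL tendrils whose vertices are convex combinations $s_{\rm L}f+(1-s_{\rm L})g$ evaluated at mesh points, guided by auxiliary parallel lines chosen close enough to each edge to preserve the Lipschitz constant and the strict nesting, and only then interpolates the two graph functions vertically up to $(f_n,g_n)$ as you do. Your second stage matches the paper; your first stage is asserted rather than constructed, and it is not routine.
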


Assuming this proposition we have
\begin{proof}[Proof of Theorem \ref{thm:thetarect}]
	The `no shrinkout' Lemma 5.1 of \cite{greenelobb3} (whose proof extends without alteration to the PL case) established the existence of a sequence of $\theta$-rectangles $Q_n \subset C_n$, each of diagonal length bounded below away from 0.  Compactness gives a subsequence that converges to a (non-degenerate) inscribed rectangle of $\gamma(f,g)$.
\end{proof}

Therefore it remains to establish Proposition \ref{prop:canapproxtaocurvebyPLcurves}.

\begin{lem}
	\label{lem:approximatingthetaocurvebysequencesofPLisotopies}
	For each $n \geq 1$ and $0 \leq t \leq 1$, there exists a PL isotopy of PL curves $\gamma(F_n(t),G_n(t))$ such that
	\begin{itemize}
		\item $\gamma(F_n(0), G_n(0)) = S$ where $S$ is a square Jordan curve,
		\item $\gamma(F_n(t),G_n(t))$ is contained in the bounded complementary region to $\gamma(F_n(s),G_n(s))$ for all $0 \leq t < s \leq 1$,
		\item $F_n(t), G_n(t) \colon [a_n(t), b_n(t)] \longrightarrow \bR$ are PL functions of Lipschitz constant less than $\tan(\frac{\pi + \theta}{4})$,
		\item $\gamma(F_n(1), G_n(1))\rightarrow \gamma(f,g)$ as $n \rightarrow \infty$.
	\end{itemize}
\end{lem}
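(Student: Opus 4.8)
Write $\kappa=\tan(\tfrac{\pi+\theta}{4})>1$, fix $\Lambda<\kappa$ bounding the Lipschitz constants of $f$ and $g$, and fix once and for all a slope $s$ with $\max(\Lambda,1)<s<\kappa$ (possible since $\Lambda<\kappa$ and $1<\kappa$). First, approximate: let $f_n,g_n\colon[a,b]\to\bR$ be the piecewise linear interpolants of $f,g$ on the uniform $n$--partition. Each linear piece has slope a difference quotient of $f$ or $g$, so $f_n,g_n$ still have Lipschitz constant $\le\Lambda$; they agree with $f,g$ at the endpoints, satisfy $f_n>g_n$ on $(a,b)$ (the PL interpolant of the nonnegative function $f-g$, which is positive there), and converge uniformly to $f,g$, so $\gamma(f_n,g_n)\to\gamma(f,g)$. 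Write $R_n$ for the region bounded by $\gamma(f_n,g_n)$ and, for $\varepsilon\in[0,\tfrac12)$, $R_n^\varepsilon$ for the region bounded by $\gamma\bigl((1-\varepsilon)f_n+\varepsilon g_n,\,(1-\varepsilon)g_n+\varepsilon f_n\bigr)$ --- the $(1-2\varepsilon)$--scaling of $\gamma(f_n,g_n)$ towards its spine, again of Lipschitz constant $\le\Lambda$, with $R_n^{\varepsilon'}\subsetneq\operatorname{int}R_n^{\varepsilon}$ for $\varepsilon<\varepsilon'$. I would take the terminal curve of the isotopy to be $\gamma(F_n(1),G_n(1))=\partial R_n^{\varepsilon_n}$ for a sequence $\varepsilon_n\downarrow0$; this still converges to $\gamma(f,g)$, which is the last bullet.

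Next, build a continuous version of the isotopy by growing a square into $R_n^{\varepsilon_n}$. Fix $n$, pick $(x^\ast,y^\ast)\in\operatorname{int}R_n^{\varepsilon_0}$ for some small $\varepsilon_0>0$, and take $\rho>0$ small enough that even the slope-$s$ ``diamond'' $D_{s,\rho'}:=\{|y-y^\ast|\le s(\rho'-|x-x^\ast|)\}$ lies in $\operatorname{int}R_n^{\varepsilon_0}$ for some $\rho'>\rho$; then $S_n:=D_{1,\rho}=\{|x-x^\ast|+|y-y^\ast|\le\rho\}$ is a square contained in $\operatorname{int}R_n^{\varepsilon_0}$. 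Deform $S_n=R_n^{\varepsilon_0}\cap D_{1,\rho}$ to $R_n^{\varepsilon_n}$ through the regions $R_n^{\varepsilon(t)}\cap D_{\sigma(t),r(t)}$, where on an initial interval $\sigma$ rises from $1$ to $s$ and $r$ from $\rho$ to $\rho'$ (with $\varepsilon\equiv\varepsilon_0$, so this phase merely opens the square up into a slope-$s$ rhombus, all stages still inside $R_n^{\varepsilon_0}$), and thereafter $\sigma\equiv s$ while $\varepsilon$ decreases to $\varepsilon_n$ and $r$ increases past the point where $D_{s,r}\supseteq R_n$; take $r$ strictly increasing throughout. Two things must be checked. (i) Each $\partial\bigl(R_n^\varepsilon\cap D_{s,r}\bigr)$ is again a PL graph-pair Jordan curve of Lipschitz constant $\le s<\kappa$: over a vertical line the slice of $R_n^\varepsilon\cap D_{s,r}$ is an interval whose two defining inequalities are \emph{quasiconvex} in $x$ because $s>\Lambda$ (one-sided slopes bounded away from $0$), so the slice is nonempty over a single closed interval $I$ and is non-degenerate over $\operatorname{int}I$ (a tangency of the top or bottom graph to a diamond edge would, again by $s>\Lambda$, make admissibility fail on one side); thus the boundary is $\gamma\bigl(\min(F^\varepsilon,A),\max(G^\varepsilon,B)\bigr)$ for the diamond edges $A,B$, of Lipschitz constant $\le\max(\Lambda,s)=s$. (ii) The family is \emph{strictly} nested: along the deformation $\varepsilon$ is non-increasing and $r$ strictly increasing, so the $R_n^\varepsilon$ are nested and the $D_{\sigma,r}$ strictly nested, hence so are the intersections --- and strict nestedness persists even after the diamonds engulf $R_n$ precisely because $\varepsilon$ is simultaneously being pushed down. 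At the ends, $R_n^{\varepsilon_0}\cap D_{1,\rho}=S_n$ and $R_n^{\varepsilon_n}\cap D_{s,r_{\max}}=R_n^{\varepsilon_n}$.

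Finally, make the isotopy PL. The construction above has piecewise-algebraic vertex tracks --- for instance $(x^\ast,y^\ast\pm\sigma(t)r(t))$, or the intersection points of edges of $\partial R_n^{\varepsilon(t)}$ with edges of $\partial D_{s,r(t)}$, both families of edges translating as $t$ varies --- so it is not literally a PL isotopy. But there are only finitely many ``combinatorial transition'' times; sampling $[0,1]$ finely between them and letting each vertex move linearly in $t$ on each subinterval produces an honest PL isotopy of PL curves in the sense of Definition~\ref{defin:pwlinearisotopy}. It agrees with the old family at $t=0$ and $t=1$, so $\gamma(F_n(0),G_n(0))=S_n$ and $\gamma(F_n(1),G_n(1))=\partial R_n^{\varepsilon_n}$; it keeps the Lipschitz bound, since each interpolated edge is a convex combination of the two sampled edges, each with positive first coordinate; and, for a fine enough sampling, it remains strictly nested, that being an open condition, and retains its graph-pair boundary conditions. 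Taking $S=S_n$, all four bullets hold.

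I expect the main difficulty to be item (i) of the second step: guaranteeing that $R_n^\varepsilon\cap D_{s,r}$ stays a non-degenerate PL graph-pair curve --- no pinching, no splitting, Lipschitz constant still below $\kappa$ --- throughout the deformation. Such an argument seems unavoidable: because $\gamma(f_n,g_n)$ is not convex, it cannot simply be rescaled down to a point (which would be nested and Lipschitz-constant-preserving), and the growing-diamond device is what replaces rescaling; it works only because of the slack $\Lambda<\kappa$ in the Lipschitz hypothesis, which is exactly the feature of the curves $\gamma(f,g)$ that makes the whole approach possible.
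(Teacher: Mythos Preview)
Your construction is genuinely different from the paper's. The paper proceeds in two explicit stages: first it grows the square $S$ outward along piecewise linear ``tendril'' paths $P^n_{\rm L}$ and $P^n_{\rm R}$ running from the horizontal vertices of $S$ toward the corners $(a,f(a))$ and $(b,f(b))$ of $\gamma(f,g)$, landing on a thin neighbourhood of $P^n_{\rm L}\cup S\cup P^n_{\rm R}$; then it lets the vertices of that neighbourhood drift linearly up to $\Gamma(f)$ and down to $\Gamma(g)$. Your device --- intersecting a growing slope-$s$ diamond with the vertically rescaled region $R_n^{\varepsilon(t)}$ --- is more uniform and avoids the edge-by-edge pushing, at the price of having to verify globally that the intersection stays a non-degenerate graph-pair region. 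You do this correctly in item~(i), and the key input is exactly the slope gap $s>\Lambda$.

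There is, however, a real gap at the endpoints. You assert $R_n^{\varepsilon'}\subsetneq\operatorname{int}R_n^{\varepsilon}$ for $\varepsilon<\varepsilon'$, but this is false: every $R_n^\varepsilon$ contains the corners $(a,f(a))$ and $(b,f(b))$, since there $(1-\varepsilon)f_n+\varepsilon g_n=(1-\varepsilon)g_n+\varepsilon f_n$. Consequently, once $D_{s,r(t)}$ swallows a corner, the curves $\partial\bigl(R_n^{\varepsilon(t)}\cap D_{s,r(t)}\bigr)$ all pass through it, and strict nesting fails --- contrary to your claim that ``strict nestedness persists even after the diamonds engulf $R_n$ precisely because $\varepsilon$ is simultaneously being pushed down.'' The repair is easy: either stop $r$ short of the corners and take the terminal curve to be $\partial\bigl(R_n^{\varepsilon_n}\cap D_{s,r_n}\bigr)$ with $r_n$ chosen so that this still Hausdorff-converges to $\gamma(f,g)$, or place the centre $(x^\ast,y^\ast)$ so that both corners enter the diamond only at $t=1$. (The paper, for what it is worth, also hand-waves at exactly this spot: it gives a construction with closed rather than interior containment and remarks that ``small adjustments to the linear isotopy allow one to achieve containment in the interior.'')
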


Assuming Lemma \ref{lem:approximatingthetaocurvebysequencesofPLisotopies}, we have
\begin{proof}[Proof of Proposition \ref{prop:canapproxtaocurvebyPLcurves}]
	The curves $C_n$ of Proposition \ref{prop:canapproxtaocurvebyPLcurves} will be the curves $\gamma(F_n(1),G_n(1))$ of Lemma \ref{lem:approximatingthetaocurvebysequencesofPLisotopies}.  First note that these curves are certainly of uniformly bounded length by the Lipschitz property.
	
	Next note that by Lemma \ref{lem:tao_curves_inscribe_graceful}, we know that the families $\gamma(F_n(t),G_n(t))$ only inscribe elegant rectangles.  So by Proposition \ref{prop:varying_through_graceful} we have that
	\[ \ell_i(S,\theta) = \ell_i(\gamma(F_n(0), G_n(0)), \theta) \leq  \ell_i(\gamma(F_n(1),G_n(1)), \theta) = \ell_i(C_n,\theta) \leq \area(C_n) - \ell_i(S,\theta) {\rm .} \]
	
	It remains to observe that every $\theta$-rectangle inscribed in a square Jordan curve has positive action, so that we may set $\delta = \ell_i(S,\theta)$.
\end{proof}

\begin{figure}
	\labellist
	\pinlabel {$P^n_{\rm L}$} at 190 -50
	\pinlabel {$P^n_{\rm R}$} at 750 120
	\pinlabel {$S$} at 550 320
	\pinlabel {$v_0$} at 30 150
	\pinlabel {$v_1$} at 60 -40
	\pinlabel {$v_n$} at 250 200
	\endlabellist
	\centering
	\vspace*{0.5cm}
	\includegraphics[scale=0.2]{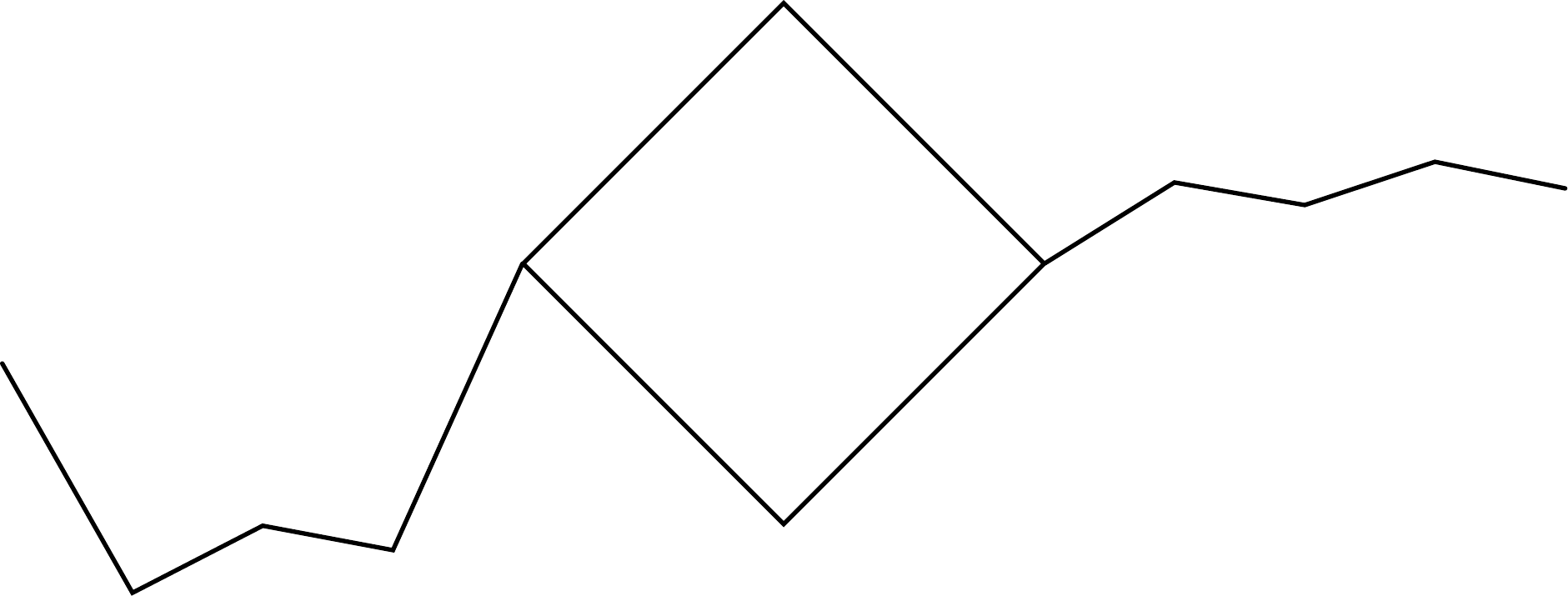}
	\vspace*{0.5cm}
	\caption{We illustrate $P^n_{\rm L} \cup S \cup P^n_{\rm R}$.  The central square is $S$ and the two tendrils running off to either side are the paths $P^n_{\rm L}$ and $P^n_{\rm R}$.  This union can be thought of as something like a \emph{core} of the Jordan curve $\gamma(f,g)$.  We have labelled some of the vertices on the left hand path $P^n_{\rm L}$.}
	\label{fig:PLtendrillycurve}
\end{figure}

The only result left to establish is Lemma \ref{lem:approximatingthetaocurvebysequencesofPLisotopies}.

\begin{proof}[Proof of Lemma \ref{lem:approximatingthetaocurvebysequencesofPLisotopies}]
	We shall let $S$ be a small square, centred at $\left( 1/2, \frac{f(1/2) + g(1/2)}{2} \right)$, so that $S$ is entirely contained in the interior of the region bounded by $\gamma(f,g)$, and so that the diagonals of $S$ are vertical and horizontal.
	
	Then we have that the left vertex and the right vertex of $S$ have coordinates
	\[ \left(r, s_{\rm L}f(r) + (1-s_{\rm L})g(r)\right) \,\,\, {\rm and} \,\,\, \left( 1-r, s_{\rm R}f(1-r) + (1-s_{\rm R})g(1-r) \right) \]
	for some $0 < r < 1/2$, $0 < s_{\rm L}, s_{\rm R} < 1$.
	
	We shall now describe the construction of the PL isotopy $\gamma(F_n(t), G_n(t))$.
	
	First let us consider the union of $S$ with two piecewise linear paths $P^n_{\rm L}$ and $P^n_{\rm R}$.
	
	The path $P^n_{\rm L}$ has $n+1$ vertices $v_0, v_1, \ldots, v_n$ with edges connecting $v_{i-1}$ to $v_{i}$ for each $1 \leq i \leq n$, and
	\[ v_i = \left( r/i , s_{\rm L}f(r/i) + (1-s_{\rm L})g(r/i) \right) {\rm .} \]
	So each vertex of $P^n_{\rm L}$ lies between the graphs of $f$ and $g$ (although its edges may not).  Thinking of $P^n_{\rm L}$ itself as the graph of a function on $[0,r]$, this function satisfies the same Lipschitz constraints as $f$ and $g$.

	The path $P^n_{\rm R}$ is defined similarly, and runs between the right hand vertex of $S$ and the rightmost point of $\gamma(f,g)$.  See Figure \ref{fig:PLtendrillycurve}.

	To avoid overcomplicating the picture, we shall give a description in which for $s < t$, $\gamma(F_n(s), G_N(s))$ is contained in the \emph{closed} bounded region bounded by $\gamma(F_n(t), G_n(t))$, although the reader will readily verify that the small adjustments to the linear isotopy allow one to achieve containment in the interior.
	
	For $0 \leq t \leq 1/2$, we shall define the PL isotopy $\gamma(F_n(t), G_n(t))$ to begin at $t=0$ with $S$, and to end at $t=1/2$ with a PL curve that is the boundary of a small neighbourhood of $P^n_{\rm L} \cup S \cup P^n_{\rm R}$.  This isotopy will happen in $n$ stages -- each stage involving pushing the curve out along the next edge of $P^n_{\rm L}$ and of $P^n_{\rm R}$.
	
	\begin{figure}
		\labellist
		\endlabellist
		\centering
		\vspace*{0.5cm}
		\includegraphics[scale=0.15]{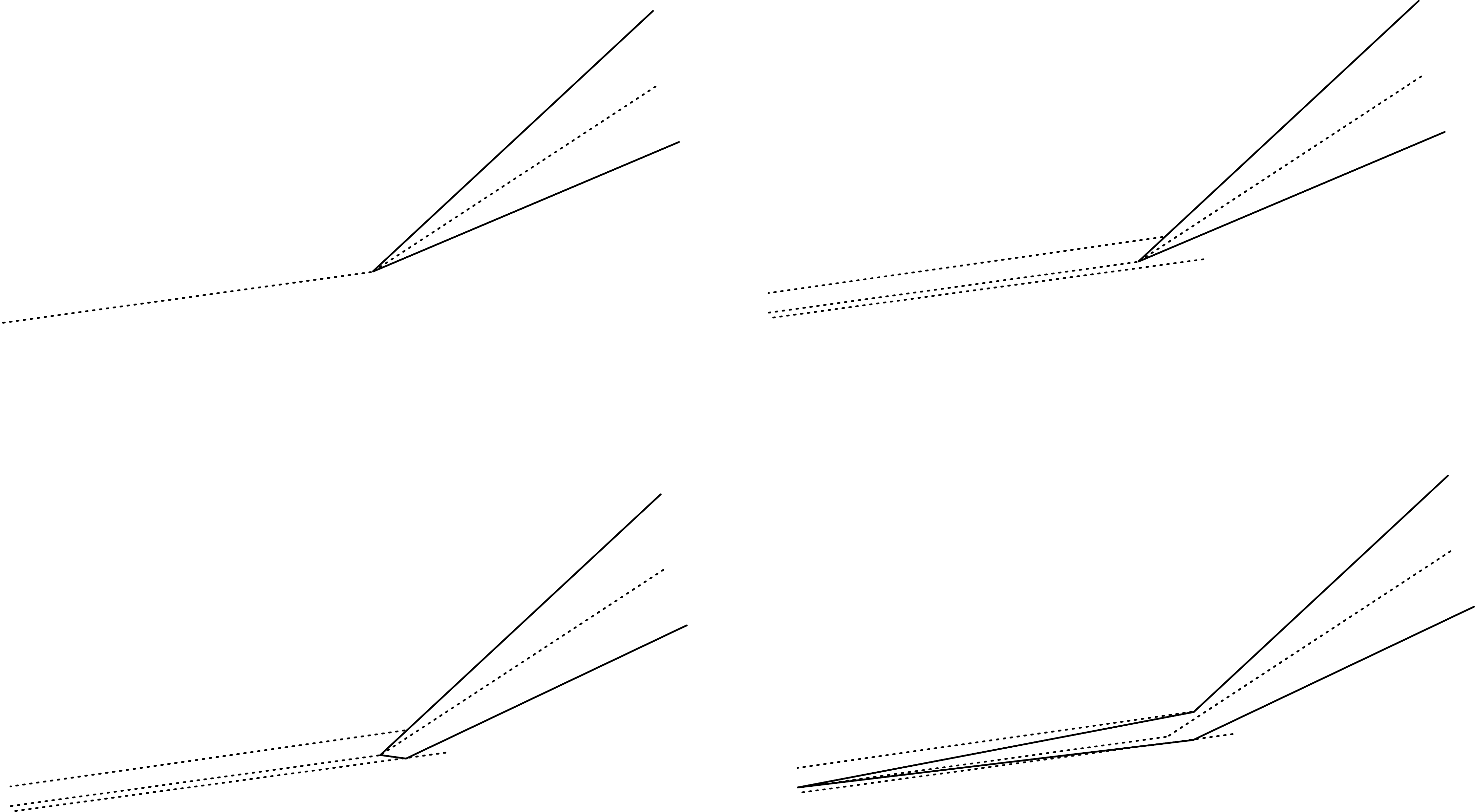}
		\vspace*{0.5cm}
		\caption{We illustrate a stage in pushing out the square along the PL path $P^n_{\rm L}$.  The figure is to be read from left to right and from top to bottom.  The first image shows the result of the previous stage.  The PL curve undergoing the PL isotopy is drawn in a solid line, and two edges of the path $P^n_{\rm L}$ are drawn in dotted lines.  The second image differs from the first image only by the addition of extra guide lines, also dotted, which are parallel to the edge of $P^n_{\rm L}$ which we are next to run over (one may take these guide lines to be equidistant from the parallel edge -- we have chosen not to do so in this picture).  In the third image an internal vertex (in other words subtending angle $\pi$) of the PL curve has moved vertically downwards to meet the lower guideline in the same $x$ coordinate as the curve meets the upper guide line.  Where the curve meets the upper guide line there is another internal vertex.  In the fourth image the leftmost vertex has travelled linearly down the next edge of $P^n_{\rm L}$ to the next vertex of $P^n_{\rm L}$.}
		\label{fig:PLtendrillyisotopystage}
	\end{figure}
	
	We illustrate a stage of the process in Figure \ref{fig:PLtendrillyisotopystage}.  The reader will note that in choosing the guide lines (described in the caption to that figure) very close to the edge along which they run, we can ensure that at each stage $F_n(t)$ and $G_n(t)$ remain curves of the required Lipschitz constant.
	
	We have landed at the curve $\gamma(F_n(1/2), G_n(1/2)) = \Gamma(F_n(1/2)) \cup \Gamma(G_n(1/2))$ in which the graphs $\Gamma(F_n(1/2))$ and $\Gamma(G_n(1/2))$ are each naturally piecewise linear paths of $2n+3$ vertices, some of which may subtend angle $\pi$.  Both to $\Gamma(F_n(1/2))$ and to $\Gamma(G_n(1/2))$ we add $2n$ more vertices subtending angle $\pi$, $n$ each equally spaced along each edge of the square $S$.
	
	Suppose that the $i$th vertex of $\Gamma(F_n(1/2))$ has coordinates $(x_i, F_n(1/2)(x_i))$.  For $1/2 \leq t \leq 1$ we define
	\[ F_n(t) \colon [0,1] \rightarrow \bR\]
	to be a PL function with vertices at
	\[ \left( x_i, 2(t-1/2)f(x_i) + 2(1-t)F_n(1/2)(x_i) \right) {\rm ;} \]
	in other words, with vertices that move linearly upwards with $t$ until they hit the graph $\Gamma(f)$ at time $t=1$.  Similarly we let the vertices of $\Gamma(G_n(1/2))$ move linearly downwards until they hit the graph $\Gamma(g)$.
	
	This completes the construction of the PL isotopy $\gamma(F_n(t), G_n(t))$ for $0 \leq t \leq 1$.
	
	Finally, we observe that $F_n(1) \rightarrow f$ and $G_n(1) \rightarrow g$ as $n \rightarrow \infty$ since the PL approximations are made with arbitrarily fine meshes.
\end{proof}

\bibliographystyle{amsplain}
\bibliography{References./works-cited.bib}

\providecommand{\bysame}{\leavevmode\hbox to3em{\hrulefill}\thinspace}
\providecommand{\MR}{\relax\ifhmode\unskip\space\fi MR }
\providecommand{\MRhref}[2]{%
  \href{http://www.ams.org/mathscinet-getitem?mr=#1}{#2}
}
\providecommand{\href}[2]{#2}
\begin{thebibliography}{10}

\bibitem{akveldsalamonart}
Meike Akveld and Dietmar Salamon, \emph{Loops of {L}agrangian submanifolds and
  pseudoholomorphic discs}, G{A}{F}{A}, Geom. {F}unct. {A}nal. \textbf{11}
  (2001), 609--650.

\bibitem{audindamian}
Mich\`ele Audin and Mihai Damian, \emph{Morse theory and {F}loer homology},
  Universitext, Springer, London; EDP Sciences, Les Ulis, 2014, Translated from
  the 2010 French original by Reinie Ern\'{e}.

\bibitem{aurouxintro}
Denis Auroux, \emph{A beginner's introduction to {F}ukaya categories}, Contact
  and symplectic topology, Bolyai Soc. Math. Stud., vol.~26, J\'{a}nos Bolyai
  Math. Soc., Budapest, 2014, pp.~85--136.

\bibitem{chernoff}
Paul Chernoff, \emph{The boundary values of a holomorphic function}, Amer.
  {M}ath. {M}onthly \textbf{74} (1967), 843.

\bibitem{floerlag}
Andreas Floer, \emph{Morse theory for {L}agrangian intersections}, J.
  Differential Geom. \textbf{28} (1988), no.~3, 513--547.

\bibitem{greenelobb3}
Joshua~Evan Greene and Andrew Lobb, \emph{Floer homology and square pegs}, {\tt
  arXiv:2404.05179} (2020).

\bibitem{greenelobb1}
Joshua~Evan Greene and Andrew Lobb, \emph{The rectangular peg problem}, Ann. of
  Math. (2) \textbf{194} (2021), no.~2, 509--517.

\bibitem{karlsson}
Cecilia Karlsson, \emph{Area-preserving isotopies of self-transerve immersions
  of ${S}^1$ in $\mathbb{{R}}^2$}, Ark. {M}at. \textbf{51} (2012), 85--97.

\bibitem{leczap}
R\'{e}mi Leclercq and Frol Zapolsky, \emph{Spectral invariants for monotone
  {L}agrangians}, J. Topol. Anal. \textbf{10} (2018), no.~3, 627--700.

\bibitem{schwartz}
Richard Schwartz, \emph{A trichotomy for rectangles inscribed in {J}ordan
  loops}, Geom. Dedicata (2020).

\bibitem{tao}
Terence Tao, \emph{An integration approach to the {T}oeplitz square peg
  problem}, Forum Math. Sigma \textbf{5} (2017), no.~e30.

\bibitem{toeplitz1911}
Otto Toeplitz, \emph{Ueber einige {A}ufgaben der {A}nalysis situs},
  Verhandlungen der {S}chweizerischen {N}aturforschenden {G}esellschaft (1911),
  no.~4, 197.

\end{thebibliography}
\end{document}